\newcommand{\C}{\mathbb{C}}
\newcommand{\N}{\mathbb{N}}
\newcommand{\R}{\mathbb{R}}
\renewcommand{\S}{\mathbb{S}}
\newcommand{\boC}{\mathcal{C}}
\newcommand{\boE}{\mathcal{E}}
\newcommand{\boK}{\mathcal{K}}
\newcommand{\boR}{\mathcal{R}}
\newcommand{\gE}{\mathfrak{E}}
\newcommand{\gS}{\mathfrak{S}}
\DeclareMathOperator{\arcosh}{{\rm arcosh}}
\renewcommand{\div}{\mathop{{\rm div}}\nolimits}
\DeclareMathOperator{\diag}{{\rm diag}}
\renewcommand{\Im}{\mathop{{\rm Im}}\nolimits}
\renewcommand{\Re}{\mathop{{\rm Re}}\nolimits}
\DeclareMathOperator{\sign}{{\rm sign}}
\renewcommand{\div}{\operatorname{div}}
\newtheorem{thm}{Theorem}
\newtheorem{cor}[thm]{Corollary}
\newtheorem{lem}[thm]{Lemma}
\newtheorem{prop}[thm]{Proposition}
\newtheorem*{thm*}{Theorem}
\theoremstyle{definition}
\newtheorem*{merci}{Acknowledgments}
\newtheorem{rem}[thm]{Remark}
\begin{document}

\title{The cubic Schr\"odinger regime of the Landau-Lifshitz equation with a strong easy-axis anisotropy}
\author{
\renewcommand{\thefootnote}{\arabic{footnote}}
Andr\'e de Laire\footnotemark[1]~ and Philippe Gravejat\footnotemark[2]}
\footnotetext[1]{Universit\'e de Lille, CNRS, INRIA, Laboratoire Paul Painlevé (UMR 8524), F-59000 Lille, France. E-mail: {\tt andre.de-laire@univ-lille.fr}}
\footnotetext[2]{Universit\'e de Cergy-Pontoise, Laboratoire de Math\'ematiques AGM (UMR 8088), F-95302 Cergy-Pontoise Cedex, France. E-mail: {\tt philippe.gravejat@u-cergy.fr}}
\maketitle

\begin{abstract}
We pursue our work on the asymptotic regimes of the Landau-Lifshitz equation for biaxial ferromagnets. We put the focus on the cubic Schr\"odinger equation, which is known to describe the dynamics in a regime of strong easy-axis anisotropy. In any dimension, we rigorously prove this claim for solutions with sufficient regularity. In this regime, we additionally classify the one-dimensional solitons of the Landau-Lifshitz equation and quantify their convergence towards the solitons of the one-dimensional cubic Schr\"odinger equation.

\medskip

\noindent {\it Keywords and phrases}. Landau-Lifshitz equation, nonlinear Schr\"odinger equation, asymptotic regimes.

\medskip

\noindent {\it 2010 Mathematics Subject Classification}. 35Q60; 35Q55; 37K40; 35C07; 82D40.
\end{abstract}

%%%%%%%%%%%%%%
%%%%%%%%%%%%%%
%%%%%%%%%%%%%%
\section{Introduction}
%%%%%%%%%%%%%%
%%%%%%%%%%%%%%
%%%%%%%%%%%%%%

Introduced by Landau and Lifshitz in~\cite{LandLif1}, the Landau-Lifshitz equation
\begin{equation}
\tag{LL}
\label{LL}
\partial_t m + m \times \big( \Delta m - J(m) \big) = 0,
\end{equation}
describes the macroscopical dynamics of the magnetization $m =(m_1, m_2, m_3) : \R^N \times \R \to \S^2$ in a ferromagnetic material. The possible anisotropy of the material is taken into account by the diagonal matrix $J := \diag(J_1, J_2, J_3)$, but dissipation is neglected (see e.g.~\cite{KosIvKo1}). The dynamics is Hamiltonian and the corresponding Hamiltonian is the Landau-Lifshitz energy
$$E_{\rm LL}(m) := \frac{1}{2} \int_{\R^N} \big( |\nabla m|^2 + \lambda_1 m_1^2 + \lambda_3 m_3^2 \big).$$
The characteristic numbers $\lambda_1 := J_2 - J_1$ and $\lambda_3 := J_2 - J_3$ give account of the anisotropy since they determine the preferential orientations of the magnetization with respect to the canonical axes. For biaxial ferromagnets, all the numbers $J_1$, $J_2$ and $J_3$ are different, so that $\lambda_1\neq \lambda_3$ and $\lambda_1 \lambda_3 \neq 0$. Uniaxial ferromagnets are characterized by the property that only two of the numbers $J_1$, $J_2$ and $J_3$ are equal. For instance, let us fix $J_1 = J_2$, which corresponds to $\lambda_1 = 0$ and $\lambda_3 \neq 0$, so that the material has a uniaxial anisotropy in the direction corresponding to the unit vector $e_3 = (0, 0, 1)$. In this case, the ferromagnet owns an easy-axis anisotropy along the vector $e_3$ if $\lambda_3 < 0$, while the anisotropy is easy-plane along the plane $x_3 = 0$ if $\lambda_3 > 0$. 
In the isotropic case $\lambda_1 = \lambda_3 = 0$, the Landau-Lifshitz equation reduces to the well-known Schr\"odinger map equation (see e.g.~\cite{DingWan1, SulSuBa1, ChaShUh1, BeIoKeT1} and the references therein).

In dimension one, the Landau-Lifshitz equation is completely integrable by means of the inverse scattering method (see e.g.~\cite{FaddTak0}). In this setting, it is considered as a universal model from which it is possible to derive other completely integrable equations. Sklyanin highlighted this property in~\cite{Sklyani1} by deriving two asymptotic regimes corresponding to the Sine-Gordon equation and the cubic Schr\"odinger equation.

In a previous work~\cite{deLaGra2}, we provided a rigorous derivation of the Sine-Gordon regime in any dimension $N \geq 1$. This equation appears in a regime of a biaxial material with strong easy-plane anisotropy, where the anisotropy parameters are given by
$$\lambda_1 = \sigma \varepsilon, \quad \text{and} \quad \lambda_3 = \frac{1}{\varepsilon}.$$
Here and in the sequel, $\varepsilon$ refers as usual to a small positive number, and $\sigma$ is a fixed positive constant. More precisely, we introduced a hydrodynamic formulation of the Landau-Lifshitz equation for which the solutions $m$ write as
$$m = \big( (1 - u^2)^\frac{1}{2} \sin(\phi), (1 - u^2)^\frac{1}{2} \cos(\phi), u \big),$$
and we established that the rescaled functions $(U_\varepsilon, \Phi_{\varepsilon})$ given by
$$u(x, t) = \varepsilon U_\varepsilon( \varepsilon^\frac{1}{2} \, x, t), \quad \text{and} \quad \phi(x, t) = \Phi_\varepsilon( \varepsilon^\frac{1}{2} \, x, t),$$
satisfy the Sine-Gordon system
$$\begin{cases} \partial_t U = \Delta \Phi - \frac{\sigma}{2} \sin(2 \Phi),\\
\partial_t \Phi = U, \end{cases}$$
in the limit $\varepsilon \to 0$ (under suitable smoothness assumptions on the initial datum). We refer to~\cite{deLaGra2} for more details.

We now focus on the cubic Schr\"odinger equation, which is obtained in a regime of strong easy-axis anisotropy. For this purpose, we consider a uniaxial material in the direction corresponding to the vector $e_2=(0,1,0)$ and we fix the anisotropy parameters as
$$\lambda_1 = \lambda_3 = \frac{1}{\varepsilon}.$$
For this choice, the complex map $\check{m} = m_1 + i m_3$ and the function $m_2$ corresponding to a solution $m$ to the Landau-Lifshitz equation satisfy~\footnote{Here as in the sequel, the notation $\langle z_1, z_2 \rangle_\C$ stands for the canonical real scalar product of the two complex numbers $z_1$ and $z_2$, which is given by
$$\langle z_1, z_2 \rangle_\C = \Re(z_1) \Re(z_2) + \Im(z_1) \Im(z_2) = \Re (z_1 \bar{z}_2).$$}
\begin{equation}
\label{eq:sys-LL} 
\begin{cases} i \partial_t \check{m} + m_2 \Delta \check{m}- \check{m} \Delta m_2 - \frac{1}{\varepsilon} m_2 \check{m} = 0,\\
\partial_t m_2 - \langle i \check{m}, \Delta \check{m} \rangle_{\C} = 0.
\end{cases}
\end{equation}
Let us introduce the complex-valued function $\Psi_\varepsilon$ given by
\begin{equation}
\label{def:Psi-eps}
\Psi_\varepsilon(x, t)=\varepsilon^{- \frac{1}{2}} \check{m}(x, t) e^\frac{i t}{\varepsilon}.
\end{equation}
This function is of order $1$ in the regime where the map $\check{m}$ is of order $\varepsilon^\frac{1}{2}$. When $\varepsilon$ is small enough, the function $m_2$ does not vanish in this regime, since the solution $m$ is valued into the sphere $\S^2$.
Assuming that $m_2$ is everywhere positive, it is given by the formula
$$m_2 = \big( 1 - \varepsilon |\Psi_\varepsilon|^2| \big)^\frac{1}{2},$$
and the function $\Psi_\varepsilon$ is solution to the nonlinear Schr\"odinger equation
\begin{equation}
\tag{NLS$_\varepsilon$}
\label{NLS-eps}
i \partial_t \Psi_\varepsilon + \big( 1 - \varepsilon |\Psi_\varepsilon|^2 \big)^\frac{1}{2} \Delta \Psi_\varepsilon + \frac{|\Psi_\varepsilon|^2}{1 + (1 - \varepsilon |\Psi_\varepsilon|^2)^\frac{1}{2}} \Psi_\varepsilon + \varepsilon \div \Big( \frac{\langle \Psi_\varepsilon, \nabla \Psi_\varepsilon \rangle_\C}{(1 - \varepsilon |\Psi_\varepsilon|^2)^\frac{1}{2}} \Big) \Psi_\varepsilon = 0.
\end{equation}
As $\varepsilon \to 0$, the formal limit equation is the focusing cubic Schr\"odinger equation
\begin{equation}
\tag{CS}
\label{CS}
i \partial_t \Psi + \Delta \Psi + \frac{1}{2} |\Psi|^2 \Psi = 0.
\end{equation}
Our main goal in the sequel is to justify rigorously this cubic Schr\"odinger regime of the Landau-Lifshitz equation.

We first recall some useful facts about the Cauchy problems for the Landau-Lifshitz and cubic Schr\"odinger equations. Concerning this latter equation, we refer to~\cite{Cazenav0} for an extended review of the corresponding Cauchy problem. In the sequel, our derivation of the cubic Schr\"odinger equation requires additional smoothness, so that we are mainly interested in smooth solutions for which a fixed-point argument provides the following classical result.

\begin{thm*}[\cite{Cazenav0}]
Let $k \in \N$, with $k > N/2$. Given any function $\Psi^0 \in H^k(\R^N)$, there exist a positive number $T_{\max}$ and a unique solution $\Psi \in \boC^0([0, T_{\max}), H^k(\R^N))$ to the cubic Schr\"odinger equation with initial datum $\Psi^0$, which satisfies the following statements.

\noindent $(i)$ If the maximal time of existence $T_{\max}$ is finite, then
$$\lim_{t \to T_{\max}} \| \Psi(\cdot, t) \|_{H^k} = \infty, \quad \text{and} \quad \limsup_{t \to T_{\max}} \| \Psi(\cdot, t) \|_{L^\infty} = \infty.$$

\noindent $(ii)$ The flow map $\Psi^0 \mapsto \Psi$ is well-defined and Lipschitz continuous from $H^k(\R^N)$ to $\boC^0([0, T], \linebreak H^k (\R^N))$ for any number $0 < T < T_{\max}$.

\noindent $(iii)$ When $\Psi^0 \in H^\ell(\R^N)$, with $\ell > k$, the solution $\Psi$ lies in $\boC^0([0, T], H^\ell(\R^N))$ for any number $0 < T < T_{\max}$.

\noindent $(iv)$ The $L^2$-mass $M_2$ and the cubic Schr\"odinger energy $E_{\rm CS}$ given by
$$M_2(\Psi) = \int_{\R^N} |\Psi|^2, \quad \text{and} \quad E_{\rm CS}(\Psi) = \frac{1}{2} \int_{\R^N} |\nabla \Psi|^2 - \frac{1}{4} \int_{\R^N} |\Psi|^4,$$
are conserved along the flow.
\end{thm*}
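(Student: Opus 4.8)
The plan is to recast the Cauchy problem for~\eqref{CS} in Duhamel form,
$$\Psi(t) = e^{i t \Delta} \Psi^0 + \frac{i}{2} \int_0^t e^{i (t - s) \Delta} \big( |\Psi|^2 \Psi \big)(s) \, ds,$$
and to solve it by a contraction-mapping argument in $\boC^0([0, T], H^k(\R^N))$. Two structural facts drive the argument in the range $k > N/2$. First, the free Schr\"odinger group $(e^{i t \Delta})_{t \in \R}$ is a one-parameter group of isometries of every $H^s(\R^N)$. Second, $H^k(\R^N)$ is then a Banach algebra continuously embedded in $L^\infty(\R^N)$, so that the cubic map $\Psi \mapsto |\Psi|^2 \Psi = \Psi^2 \bar{\Psi}$ is locally Lipschitz from $H^k$ into itself, with a Lipschitz constant controlled by the $H^k$-norms of its arguments. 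Combining the isometry property with this bound, the right-hand side of the Duhamel formula maps a suitable ball of $\boC^0([0, T], H^k)$ into itself and is a contraction there, provided $T$ is chosen small in terms of $\| \Psi^0 \|_{H^k}$. The Banach fixed-point theorem yields a unique local solution, and gluing such solutions gives the maximal existence time $T_{\max}$. Since $e^{i t \Delta}$ preserves the $H^k$-norm and the local time depends only on $\| \Psi^0 \|_{H^k}$, a finite $T_{\max}$ forces $\| \Psi(\cdot, t) \|_{H^k} \to \infty$, which is the first half of~$(i)$.

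Statement~$(ii)$ rests on the same estimates: subtracting the Duhamel formulas for two solutions and invoking the local Lipschitz bound on the nonlinearity, a Gronwall argument controls $\| \Psi_1 - \Psi_2 \|_{\boC^0([0, T], H^k)}$ by $\| \Psi_1^0 - \Psi_2^0 \|_{H^k}$ for every $T < T_{\max}$. For the persistence of regularity~$(iii)$, I would rerun the fixed-point scheme with $k$ replaced by $\ell > k$; by uniqueness the resulting $H^\ell$-solution coincides with $\Psi$ on their common interval, and it only remains to prove that it survives up to $T_{\max}$.

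The crux, and the genuine obstacle, is the refined blow-up alternative in~$(i)$: it is the $L^\infty$-norm, not merely the $H^k$-norm, that must explode at a finite $T_{\max}$. The contraction argument alone does not give this, so I would pass to the energy method. Differentiating~\eqref{CS} up to order $k$, pairing with $\Psi$ in $H^k$, and estimating the nonlinear terms by the commutator (Kato--Ponce/Moser) inequalities, one obtains a differential inequality of the form
$$\frac{d}{dt} \big\| \Psi(\cdot, t) \big\|_{H^k}^2 \leq C \, \big\| \Psi(\cdot, t) \big\|_{L^\infty}^2 \, \big\| \Psi(\cdot, t) \big\|_{H^k}^2.$$
Gronwall's lemma then shows that the $H^k$-norm stays bounded as long as $\int_0^{T_{\max}} \| \Psi(\cdot, t) \|_{L^\infty}^2 \, dt$ is finite, so that a finite $T_{\max}$ forces $\limsup_{t \to T_{\max}} \| \Psi(\cdot, t) \|_{L^\infty} = \infty$. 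The same inequality at the level of $H^\ell$, combined with the fact that $\| \Psi \|_{L^\infty}$ is controlled by $\| \Psi \|_{H^k}$ on every $[0, T]$ with $T < T_{\max}$ through the embedding, shows that the $H^\ell$-norm cannot blow up before $T_{\max}$, which completes~$(iii)$.

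Finally, for the conservation laws~$(iv)$, I would first verify them on sufficiently smooth solutions, where all integrations by parts are justified: pairing~\eqref{CS} with $\bar{\Psi}$ and integrating the imaginary part gives $\frac{d}{dt} M_2(\Psi) = 0$, while pairing with $\partial_t \bar{\Psi}$ and integrating the real part, using the Hamiltonian structure of the equation, gives $\frac{d}{dt} E_{\rm CS}(\Psi) = 0$. For data only in $H^k$ I would approximate $\Psi^0$ by smoother data, for which the identities hold by~$(iii)$, and pass to the limit using the continuous dependence of~$(ii)$ together with the persistence of~$(iii)$, which guarantee convergence of the mass and energy functionals on each compact time interval.
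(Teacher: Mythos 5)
Your proposal is correct and matches the approach the paper itself points to: the statement is quoted from \cite{Cazenav0} without proof, with the remark that ``a fixed-point argument provides the following classical result'', and your Duhamel contraction in $H^k$ (using the algebra property for $k>N/2$), the Gronwall argument for the flow map, and the tame energy inequality $\frac{d}{dt}\|\Psi\|_{H^k}^2\leq C\|\Psi\|_{L^\infty}^2\|\Psi\|_{H^k}^2$ for the $L^\infty$ blow-up criterion and persistence of regularity are exactly the standard ingredients, the latter being the same computation the paper carries out in the proof of Lemma~\ref{lem:estim-CS}. The one point worth making explicit is that the $H^k$ differential inequality and the conservation identities should first be established for smoother data (so that $\partial_t\Psi$ has enough regularity for the pairings and integrations by parts) and then transferred by density together with statements $(ii)$--$(iii)$, a regularization you invoke for $(iv)$ and which the paper performs in Subsection~\ref{sub:estim-CS}.
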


The Cauchy problem for the Landau-Lifshitz equation is much more involved. In view of the definition of the Landau-Lifshitz energy, it is natural to solve it in the energy set defined as
$$\boE(\R^N) := \big\{ v \in L_{\rm loc}^1(\R^N, \S^2) : \nabla v \in L^2(\R^N) \ {\rm and} \ (v_1, v_3) \in L^2(\R^N)^2 \big\}.$$
This set appears as a subset of the vector space
$$Z^1(\R^N) := \big\{ v \in L_{\rm loc}^1(\R^N, \R^3) : \nabla v \in L^2(\R^N), v_2 \in L^\infty(\R^N) \ {\rm and} \ (v_1, v_3) \in L^2(\R^N)^2 \big\},$$
which is naturally endowed with the norm
$$\| v \|_{Z^1} := \big( \| v_1 \|_{H^1}^2 + \| v_2 \|_{L^\infty}^2 + \| \nabla v_2 \|_{L^2}^2 + \| v_3 \|_{H^1}^2\big)^\frac{1}{2}.$$
To our knowledge, the well-posedness of the Landau-Lifshitz equation for general initial data in $\boE(\R^N)$ remains an open question. This difficulty is related to the fact that the Landau-Lifshitz equation is an anisotropic perturbation of the Schr\"odinger map equation, and the Cauchy problem for this class of equations is well-known to be intrinsically difficult due to their geometric nature (see e.g.~\cite{BeIoKeT1} and the references therein).

On the other hand, our derivation of the cubic Schr\"odinger equation requires additional smoothness, so that in the sequel, we do not address the Cauchy problem for the Landau-Lifshitz equation in $\boE(\R^N)$. Instead, we focus on the well-posedness for smooth solutions. Given an integer $k \geq 1$, we set 
$$\boE^k(\R^N) := \big\{ v \in \boE(\R^N) : \nabla v \in H^{k - 1}(\R^N) \big\},$$
and we endow this set with the metric structure provided by the norm
$$\| v \|_{Z^k} := \Big( \| v_1 \|_{H^k}^2 + \| v_2 \|_{L^\infty}^2 + \| \nabla v_2 \|_{H^{k - 1}}^2 + \| v_3 \|_{H^k}^2 \big)^\frac{1}{2},$$
of the vector space
\begin{equation}
\label{def:Zk}
Z^k(\R^N) := \big\{ v \in L_{\rm loc}^1(\R^N, \R^3) : (v_1, v_3) \in L^2(\R^N)^2, v_2 \in L^\infty(\R^N) \ {\rm and} \ \nabla v \in H^{k - 1}(\R^N) \big\}.
\end{equation}
Observe that the energy set $\boE(\R^N)$ then identifies with $\boE^1(\R^N)$.

When $k$ is large enough, local well-posedness of the Landau-Lifshitz equation in the set $\boE^k(\R^N)$ follows from the next statement of~\cite{deLaGra2}. 

\begin{thm}[\cite{deLaGra2}]
\label{thm:Cauchy-LL}
Let $\lambda_1$ and $\lambda_3$ be non-negative numbers, and $k \in \N$, with $k > N/2 + 1$. Given any function $m^0 \in \boE^k(\R^N)$, there exist a positive number $T_{\max}$ and a unique solution $m : \R^N \times[0, T_{\max}) \to \S^2$ to the Landau-Lifshitz equation with initial datum $m^0$, which satisfies the following statements.

\noindent $(i)$ The solution $m$ is in the space $L^\infty([0, T], \boE^k(\R^N))$, while its time derivative $\partial_t m$ is in $L^\infty([0, T], H^{k - 2}(\R^N))$, for any number $0 < T < T_{\max}$.

\noindent $(ii)$ If the maximal time of existence $T_{\max}$ is finite, then
\begin{equation}
\label{eq:cond-Tmax-LL}
\int_0^{T_{\max}} \| \nabla m(\cdot, t) \|_{L^\infty}^2 \, dt = \infty.
\end{equation}

\noindent $(iii)$ The flow map $m^0 \mapsto m$ is locally well-defined and Lipschitz continuous from $\boE^k(\R^N)$ to $\boC^0([0, T], \boE^{k - 1}(\R^N))$ for any number $0 < T < T_{\max}$.

\noindent $(iv)$ When $m^0 \in \boE^\ell(\R^N)$, with $\ell > k$, the solution $m$ lies in $L^\infty([0, T], \boE^\ell(\R^N))$, with $\partial_t m \in L^\infty([0, T], H^{\ell - 2}(\R^N))$ for any number $0 < T < T_{\max}$.

\noindent $(v)$ The Landau-Lifshitz energy is conserved along the flow.
\end{thm}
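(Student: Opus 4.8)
The plan is to prove Theorem~\ref{thm:Cauchy-LL} by the energy method, treating \eqref{LL} as a geometric perturbation of the Schr\"odinger map equation and exploiting both the algebraic structure of the cross product and the constraint $|m|=1$ to control the quasilinear loss of derivatives. Since $m$ takes values in $\S^2$, the quantity to estimate is $\|\nabla m\|_{H^{k-1}}$, and I would first rewrite the leading term in divergence form, using the identity $m\times\Delta m=\div(m\times\nabla m)$, so that \eqref{LL} reads $\partial_t m=-\div(m\times\nabla m)+m\times J(m)$. Differentiating by $\partial^\alpha$ with $1\le|\alpha|\le k$ and testing in $L^2$, the highest-order contribution is $\int\langle m\times\nabla\partial^\alpha m,\nabla\partial^\alpha m\rangle$, which vanishes \emph{exactly} because $\langle m\times v,v\rangle=0$ pointwise.

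The obstacle is the residual commutator $\int\langle[\partial^\alpha,m\times]\nabla m,\nabla\partial^\alpha m\rangle$, which naively involves $k+1$ derivatives of $m$ and therefore seems to lose one derivative. This is the genuine difficulty of the problem, and I would resolve it using the sphere constraint: differentiating $|m|^2=1$ shows that the normal components $\langle m,\partial^\alpha m\rangle$ and $\langle m,\nabla\partial^\alpha m\rangle$ are of strictly lower order, being quadratic expressions in fewer derivatives of $m$. Combining this with the antisymmetry of the scalar triple product $\langle a\times b,c\rangle$, so that the dangerous top-order factor is forced to pair with a normal component, reduces the residual to terms bounded by $\|\nabla m\|_{L^\infty}\|\nabla m\|_{H^{k-1}}^2$; the threshold $k>N/2+1$ is used here, since it gives $k-1>N/2$, so that $H^{k-1}$ is an algebra embedding into $L^\infty$ and the Moser and Kato--Ponce inequalities apply. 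The anisotropy terms $\partial^\alpha(m\times J(m))$ are lower order and harmless. Together these yield the differential inequality $\frac{d}{dt}\|\nabla m\|_{H^{k-1}}^2\le C(1+\|\nabla m\|_{L^\infty}^2)\|\nabla m\|_{H^{k-1}}^2$, from which Gr\"onwall's lemma produces both the local bound and, directly, the blow-up criterion \eqref{eq:cond-Tmax-LL}.

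For existence I would regularize parabolically while preserving the target sphere, replacing \eqref{LL} by the Landau--Lifshitz--Gilbert-type flow
$$\partial_t m_\eta=-m_\eta\times\big(\Delta m_\eta-J(m_\eta)\big)+\eta\big(\Delta m_\eta+|\nabla m_\eta|^2 m_\eta\big),$$
whose extra term is the tension field of the harmonic map heat flow and is tangent to $\S^2$, so that $|m_\eta|\equiv1$ is preserved and the equation is, for fixed $\eta>0$, locally solvable by standard parabolic theory. The point is that the a priori estimates above are insensitive to the dissipation, whose contribution to the $H^{k-1}$ energy carries a favourable sign, so they hold uniformly in $\eta$ on a time interval depending only on $\|m^0\|_{Z^k}$; together with the equation this also bounds $\partial_t m_\eta$ in $H^{k-2}$. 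An Aubin--Lions compactness argument then lets me pass to the limit $\eta\to0$ along a subsequence to a solution $m$ of \eqref{LL} satisfying $|m|=1$ and statement $(i)$.

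Uniqueness and the Lipschitz dependence $(iii)$ follow from an estimate on the difference $w=m-\widetilde m$ of two solutions: subtracting the equations and testing in $L^2$, the leading term is again controlled by the cross-product cancellation, but the absence of a second gain forces a loss of one derivative, so the difference is estimated in $\boE^{k-1}$ by the $\boE^k$-data, exactly as stated. Statement $(iv)$ is obtained by running the same estimate at level $\ell>k$ and propagating the $\boE^\ell$-bound up to $T_{\max}$ via the blow-up criterion, while the conservation of energy $(v)$ follows from the Hamiltonian structure: $\partial_t m=-m\times(\Delta m-J(m))$ is pointwise orthogonal to the effective field $\Delta m-J(m)$, so testing the equation against this field and integrating by parts, which the regularity in $(i)$ justifies, shows that $E_{\rm LL}(m)$ does not depend on time. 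The main obstacle throughout is the quasilinear loss of a derivative in the top-order term $m\times\Delta m$: the whole scheme hinges on simultaneously exploiting the pointwise identity $\langle m\times v,v\rangle=0$ and the constraint relations coming from $|m|^2=1$, and on arranging a regularization that both respects the target sphere and keeps the energy estimates uniform in $\eta$.
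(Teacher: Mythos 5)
The paper does not actually prove Theorem~\ref{thm:Cauchy-LL} here: it is quoted from~\cite{deLaGra2}, and the only indication of its proof is the announced strategy (``a priori energy estimates combined with a compactness argument'') together with the closely related Proposition~\ref{prop:LL-energy-estimate}, whose estimates are run on the \emph{second-order} equation~\eqref{eq:second-LL} with the energies $E_{\rm LL}^\ell$ built from $\partial_t m$ and $\Delta m$, the derivative loss being removed by substituting $\partial_t m = -m\times(\cdots)$ and using $\langle m\times X, m\rangle = 0$. Your architecture (a priori estimates, sphere-preserving Gilbert-type regularization, Aubin--Lions, a difference estimate with a one-derivative loss) is the standard and correct one, but your energy estimate is run on the first-order equation with the quantity $\|\nabla m\|_{H^{k-1}}$, and this is where the proposal has a genuine gap. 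After extracting the exact cancellation $\int\langle m\times\nabla\partial^\alpha m,\nabla\partial^\alpha m\rangle=0$, \emph{every} commutator term $\int\langle \partial^\beta m\times\partial^{\gamma}\nabla m,\nabla\partial^\alpha m\rangle$ with $\beta+\gamma=\alpha$, $|\alpha|=k$, $\beta\neq 0$, carries a total of $2k+2$ derivatives of $m$ distributed over three factors each of order at least one; the Gagliardo--Nirenberg/Moser counting closes with $\|\nabla m\|_{L^\infty}$ and $\|\nabla m\|_{\dot H^{k-1}}$ only up to total order $2k+1$. So the loss of one derivative is not confined to the single top-order commutator: already the term with $|\beta|=2$ (schematically $\int \partial^2 m\cdot\partial^k m\cdot\partial^k m$) is out of reach of $\|\nabla m\|_{L^\infty}\|\nabla m\|_{\dot H^{k-1}}^2$ and would require $\|\nabla^2 m\|_{L^\infty}$, i.e.\ $k>N/2+2$ rather than the stated $k>N/2+1$. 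The mechanism you invoke --- ``the dangerous top-order factor is forced to pair with a normal component'' --- only treats the extremal terms where the low-order factor is a single first derivative $\partial_i m$, which is \emph{exactly} tangent. To close the estimate at $k>N/2+1$ you would need, in addition, two facts you do not state: the triple product of three vectors in the tangent plane vanishes (so the fully tangential part of every intermediate commutator term drops out), and the normal component $\langle m,\partial^\gamma m\rangle$ of a high derivative is a sum of products of \emph{two} lower-order derivatives, which splits one factor into two and restores the Moser counting by exactly one unit. Without this systematic tangential/normal decomposition the differential inequality you claim is not established.

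A second, related soft spot is the uniqueness and Lipschitz-dependence step, which you assert ``follows exactly as stated''. For the difference $w=m-\widetilde m$ the sphere constraint gives you nothing: $w$ is not tangent to either sphere-valued map, the normal-component trick is unavailable, and the term $\int\langle\partial^\alpha(w\times\nabla\widetilde m),\nabla\partial^\alpha w\rangle$ at level $|\alpha|=k-1$ still involves $k$ derivatives of $w$. This is precisely the point where the literature you would be competing with (frames in~\cite{ChaShUh1}, parallel transport in~\cite{McGahag1}) invests its main effort, and it is the reason the flow map in statement $(iii)$ only lands in $\boE^{k-1}$. The step deserves at least a precise statement of which norm of $w$ is propagated and which terms force the loss; as written it is a claim, not an argument. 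By contrast, the second-order formulation used in Proposition~\ref{prop:LL-energy-estimate} avoids the first of these two issues entirely, because the quantity paired against $\partial_x^\alpha F_\varepsilon(m)$ is $\partial_t\partial_x^\alpha m$, which is controlled directly by the energy without a further integration by parts; this is what the extra structure of~\eqref{eq:second-LL} buys.
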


In other words, there exists a unique local continuous flow corresponding to smooth solutions of the Landau-Lifshitz equation. The proof of this property is based on combining a priori energy estimates with a compactness argument. For the Schr\"odinger map equation, the same result was first proved in~\cite{ChaShUh1} when $N = 1$, and in~\cite{McGahag1} for $N \geq 2$ (see also~\cite{ZhouGuo1, SulSuBa1, DingWan1} for the construction of smooth solutions). In the more general context of hyperbolic systems, a similar result is expected when $k > N/2 + 1$ due to the fact that the critical regularity of the equation is given by the condition $k = N/2$ (see e.g.~\cite[Theorem 1.2]{Taylor03}).

Going on with our rigorous derivation of the cubic Schr\"odinger regime, we now express the previous statements in terms of the nonlinear Schr\"odinger equation~\eqref{NLS-eps} satisfied by the rescaled function $\Psi_\varepsilon$.

\begin{cor}
\label{cor:Cauchy-Psi-eps}
Let $\varepsilon$ be a fixed positive number, and $k \in \N$, with $k > N/2 + 1$. Consider a function $\Psi_\varepsilon^0 \in H^k(\R^N)$ such that
\begin{equation}
\label{cond:petit-Psi-eps}
\varepsilon^\frac{1}{2} \, \big\| \Psi_\varepsilon^0 \big\|_{L^\infty} < 1.
\end{equation}
Then, there exist a positive number $T_\varepsilon$ and a unique solution $\Psi_\varepsilon : \R^N \times[0, T_\varepsilon) \to \C$ to~\eqref{NLS-eps} with initial datum $\Psi_\varepsilon^0$, which satisfies the following statements.

\noindent $(i)$ The solution $\Psi_\varepsilon$ is in the space $L^\infty([0, T], H^k(\R^N))$, while its time derivative $\partial_t \Psi_\varepsilon$ is in $L^\infty([0, T], H^{k - 2}(\R^N))$, for any number $0 < T < T_\varepsilon$.

\noindent $(ii)$ If the maximal time of existence $T_\varepsilon$ is finite, then
\begin{equation}
\label{eq:cond-Tmax-NLS-eps}
\int_0^{T_\varepsilon} \big\| \nabla \Psi_\varepsilon(\cdot, t) \big\|_{L^\infty}^2 \, dt = \infty, \quad \text{or} \quad \varepsilon^\frac{1}{2} \, \lim_{t \to T_\varepsilon} \big\| \Psi_\varepsilon(\cdot, t) \big\|_{L^\infty} = 1.
\end{equation}

\noindent $(iii)$ The flow map $\Psi_\varepsilon^0 \mapsto \Psi_\varepsilon$ is locally well-defined and Lipschitz continuous from $H^k(\R^N)$ to $\boC^0([0, T], H^{k - 1}(\R^N))$ for any number $0 < T < T_\varepsilon$.

\noindent $(iv)$ When $\Psi_\varepsilon^0 \in H^\ell(\R^N)$, with $\ell > k$, the solution $\Psi_\varepsilon$ lies in $L^\infty([0, T], H^\ell(\R^N))$, with $\partial_t \Psi_\varepsilon \in L^\infty([0, T], H^{\ell - 2}(\R^N))$ for any number $0 < T < T_\varepsilon$.

\noindent $(v)$ The nonlinear Schr\"odinger energy $\gE_\varepsilon$ given by
$$\gE_\varepsilon(\Psi_\varepsilon) = \frac{1}{2} \int_{\R^N} \bigg( |\Psi_\varepsilon|^2 + \varepsilon |\nabla \Psi_\varepsilon|^2 + \frac{\varepsilon^2 \langle \Psi_\varepsilon, \nabla \Psi_\varepsilon \rangle_\C^2}{1 - \varepsilon |\Psi_\varepsilon|^2} \bigg),$$
is conserved along the flow.

$(vi)$ Set
$$m^0 = \Big( \varepsilon^\frac{1}{2} \, \Re \big( \Psi_\varepsilon^0 \big), \big( 1 - \varepsilon |\Psi_\varepsilon^0|^2 \big)^\frac{1}{2}, \varepsilon^\frac{1}{2} \, \Im \big( \Psi_\varepsilon^0 \big) \Big).$$
The function $m : \R^N \times [0, T_\varepsilon] \to \S^2$ given by
\begin{equation}
\label{eq:m-Psi-eps}
m(x, t) = \Big( \varepsilon^\frac{1}{2} \, \Re \big( e^{- \frac{i t}{\varepsilon}} \Psi_\varepsilon(x, t) \big), \big( 1 - \varepsilon |\Psi_\varepsilon(x, t)|^2 \big)^\frac{1}{2}, \varepsilon^\frac{1}{2} \, \Im \big( e^{- \frac{i t}{\varepsilon}} \Psi_\varepsilon(x, t) \big) \Big),
\end{equation}
for any $(x, t) \in \R^N \times [0, T_\varepsilon]$, is the unique solution to~\eqref{LL} with initial datum $m^0$ of Theorem~\ref{thm:Cauchy-LL}.
\end{cor}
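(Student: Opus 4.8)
The proof rests on the explicit, essentially algebraic correspondence between the unknown $\Psi_\varepsilon$ of~\eqref{NLS-eps} and the solution $m$ of~\eqref{LL}, which lets us transfer every assertion of Theorem~\ref{thm:Cauchy-LL} to the Schr\"odinger picture. My plan is to (a) build from $\Psi_\varepsilon^0$ an initial datum $m^0 \in \boE^k(\R^N)$, (b) apply Theorem~\ref{thm:Cauchy-LL} to obtain the Landau--Lifshitz solution $m$, (c) recover $\Psi_\varepsilon = \varepsilon^{-1/2} \check m\, e^{it/\varepsilon}$ and verify that it solves~\eqref{NLS-eps} with the announced regularity, and (d) read off statements $(i)$--$(vi)$ from their Landau--Lifshitz counterparts, the only genuinely new ingredient being the extra alternative in the blow-up criterion $(ii)$.

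For step (a), I would take $m^0$ as in $(vi)$. By construction $\abs{m^0}^2 = \varepsilon\abs{\Psi_\varepsilon^0}^2 + (1 - \varepsilon\abs{\Psi_\varepsilon^0}^2) = 1$, so $m^0$ is $\S^2$-valued, while $(m_1^0, m_3^0) = \varepsilon^{1/2}(\Re\Psi_\varepsilon^0, \Im\Psi_\varepsilon^0) \in H^k(\R^N)$. Condition~\eqref{cond:petit-Psi-eps} together with the embedding $H^k \hookrightarrow L^\infty$ (valid since $k > N/2$) guarantees $\varepsilon\abs{\Psi_\varepsilon^0}^2 \leq 1 - \delta$ for some $\delta > 0$, so $m_2^0 = (1 - \varepsilon\abs{\Psi_\varepsilon^0}^2)^{1/2}$ is bounded and bounded away from $0$. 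Since $k - 1 > N/2$, the space $H^{k-1}(\R^N)$ is an algebra, and applying standard Moser/composition estimates to the smooth map $s \mapsto (1 - s)^{1/2} - 1$ (whose argument $\varepsilon\abs{\Psi_\varepsilon^0}^2$ stays in $[0, 1 - \delta]$) shows that $\nabla m_2^0 = -\varepsilon\, \langle \Psi_\varepsilon^0, \nabla\Psi_\varepsilon^0\rangle_\C / m_2^0 \in H^{k-1}(\R^N)$. Hence $m^0 \in \boE^k(\R^N)$, and Theorem~\ref{thm:Cauchy-LL} provides a unique maximal solution $m$ on $[0, T_{\max})$.

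For steps (c)--(d), I would define $T_\varepsilon$ as the largest time in $(0, T_{\max}]$ on which $m_2$ stays positive; this is positive by continuity since $m_2^0 > 0$. On $[0, T_\varepsilon)$ the identity $m_2^2 = 1 - \abs{\check m}^2 = 1 - \varepsilon\abs{\Psi_\varepsilon}^2$ forces $m_2 = (1 - \varepsilon\abs{\Psi_\varepsilon}^2)^{1/2}$, so substituting $\check m = \varepsilon^{1/2}\Psi_\varepsilon e^{-it/\varepsilon}$ and this expression of $m_2$ into the hydrodynamic system~\eqref{eq:sys-LL} turns the computation sketched in the introduction into a rigorous one: the regularity $\nabla m \in H^{k-1}$ with $k - 1 > N/2$ and the lower bound on $m_2$ make every product and quotient a legitimate operation in the algebra $H^{k-1}$, and one recovers exactly~\eqref{NLS-eps}. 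Statements $(i)$ and $(iv)$ then follow from their counterparts in Theorem~\ref{thm:Cauchy-LL} through $\Psi_\varepsilon = \varepsilon^{-1/2}\check m\, e^{it/\varepsilon}$ and $\partial_t\Psi_\varepsilon = \varepsilon^{-1/2}(\partial_t\check m + i\varepsilon^{-1}\check m)\,e^{it/\varepsilon}$; the Lipschitz dependence $(iii)$ follows by composing the Lipschitz flow map of Theorem~\ref{thm:Cauchy-LL} with the (locally Lipschitz) changes of variables $\Psi_\varepsilon^0 \mapsto m^0$ and $m \mapsto \Psi_\varepsilon$; statement $(vi)$, together with uniqueness of $\Psi_\varepsilon$, follows from the uniqueness in Theorem~\ref{thm:Cauchy-LL}, since two solutions of~\eqref{NLS-eps} sharing the datum $\Psi_\varepsilon^0$ map to Landau--Lifshitz solutions with the same datum $m^0$; and $(v)$ reduces to the conservation of $E_{\rm LL}$ after the direct computation $E_{\rm LL}(m) = \gE_\varepsilon(\Psi_\varepsilon)$, using $\abs{\nabla\check m}^2 = \varepsilon\abs{\nabla\Psi_\varepsilon}^2$, $\abs{\check m}^2 = \varepsilon\abs{\Psi_\varepsilon}^2$ and $\abs{\nabla m_2}^2 = \varepsilon^2\langle\Psi_\varepsilon,\nabla\Psi_\varepsilon\rangle_\C^2/(1 - \varepsilon\abs{\Psi_\varepsilon}^2)$.

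The point requiring the most care, and the expected main obstacle, is the blow-up alternative $(ii)$, which I would establish by contradiction since it involves a degeneracy ($m_2 \to 0$) absent from the Landau--Lifshitz statement. Assume $T_\varepsilon < \infty$ while both alternatives of~\eqref{eq:cond-Tmax-NLS-eps} fail, that is $\int_0^{T_\varepsilon}\norm{\nabla\Psi_\varepsilon(\cdot,t)}_{L^\infty}^2\,dt < \infty$ and $\varepsilon^{1/2}\limsup_{t \to T_\varepsilon}\norm{\Psi_\varepsilon(\cdot,t)}_{L^\infty} < 1$. The second condition, together with continuity and positivity on compact subintervals (and $m_2 \to 1$ at spatial infinity, by $H^k \hookrightarrow C_0$), yields $c > 0$ with $m_2 \geq c$ on all of $[0, T_\varepsilon)$; thus $m_2$ does not vanish at $T_\varepsilon$, so by maximality $T_\varepsilon = T_{\max}$. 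Then, from $\abs{\nabla m_1}, \abs{\nabla m_3} \leq \varepsilon^{1/2}\abs{\nabla\Psi_\varepsilon}$ and $\abs{\nabla m_2} = \varepsilon\abs{\langle\Psi_\varepsilon, \nabla\Psi_\varepsilon\rangle_\C}/m_2 \leq (\varepsilon^{1/2}/c)\abs{\nabla\Psi_\varepsilon}$ (bounding $\varepsilon^{1/2}\abs{\Psi_\varepsilon} \leq 1$), one gets $\norm{\nabla m(\cdot,t)}_{L^\infty}^2 \leq C(\varepsilon, c)\norm{\nabla\Psi_\varepsilon(\cdot,t)}_{L^\infty}^2$, whence $\int_0^{T_{\max}}\norm{\nabla m}_{L^\infty}^2 < \infty$, contradicting~\eqref{eq:cond-Tmax-LL}. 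Hence at least one alternative in~\eqref{eq:cond-Tmax-NLS-eps} holds; and in the complementary case $T_\varepsilon < T_{\max}$, the function $m_2$ must touch $0$ at $T_\varepsilon$, which reads $\varepsilon\norm{\Psi_\varepsilon(\cdot,t)}_{L^\infty}^2 = 1 - \inf_x m_2(x,t)^2 \to 1$ and gives the second alternative directly.
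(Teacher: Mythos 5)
Your proposal is correct and follows essentially the same route as the paper: build $m^0$ from $\Psi_\varepsilon^0$, invoke Theorem~\ref{thm:Cauchy-LL}, define $T_\varepsilon$ through the non-vanishing of $m_2$ (equivalent to the paper's condition $\varepsilon^{1/2}\|\Psi_\varepsilon(\cdot,t)\|_{L^\infty}<1$), and transfer each statement back through the algebraic correspondence, with uniqueness obtained by mapping a second solution of~\eqref{NLS-eps} back to a Landau--Lifshitz solution. Your treatment of the blow-up alternative $(ii)$ is in fact more explicit than the paper's (which simply declares that it ``literally follows''), correctly isolating the need for a uniform lower bound on $m_2$ before the gradient criterion~\eqref{eq:cond-Tmax-LL} can be converted into the first alternative of~\eqref{eq:cond-Tmax-NLS-eps}.
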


\begin{rem}
\label{rem:inter}
Coming back to the proof of Theorem~\ref{thm:Cauchy-LL} in~\cite{deLaGra2} and using standard interpolation theory, one can check that the flow map $\Psi_\varepsilon^0 \mapsto \Psi_\varepsilon$ is locally well-defined and continuous from $H^k(\R^N)$ to $\boC^0([0, T], H^s(\R^N))$ for any number $0 < T < T_\varepsilon$ and any number $s < k$. 
\end{rem}

Corollary~\ref{cor:Cauchy-Psi-eps} also provides the existence of a unique local continuous flow corresponding to smooth solutions to~\eqref{NLS-eps}. Its proof relies on the equivalence between the Landau-Lifshitz equation and the nonlinear Schr\"odinger equation~\eqref{NLS-eps}, when condition~\eqref{cond:petit-Psi-eps} is satisfied (see statement $(vi)$ above). We refer to Subsection~\ref{sub:Cauchy-Psi-eps} for a detailed proof of this result.

With Corollary~\ref{cor:Cauchy-Psi-eps} at hand, we are now in position to state our main result concerning the rigorous derivation of the cubic Schr\"odinger regime of the Landau-Lifshitz equation.

\begin{thm}
\label{thm:conv-CLS}
Let $0 < \varepsilon < 1$ be a positive number, and $k \in \N$, with $k > N/2 + 2$. Consider two initial conditions $\Psi^0 \in H^k(\R^N)$ and $\Psi_\varepsilon^0 \in H^{k + 3}(\R^N)$, and set
$$\boK_\varepsilon^0 := \big\| \Psi^0 \big\|_{H^k} + \big\| \Psi_\varepsilon^0 \big\|_{H^k} + \varepsilon^\frac{1}{2} \big\| \nabla \Psi_\varepsilon^0 \big\|_{\dot{H}^k} + \varepsilon \big\| \Delta \Psi_\varepsilon^0 \big\|_{\dot{H}^k}.$$
Then, there exists a positive number $A$, depending only on $k$, such that, if the initial data $\Psi^0$ and $\Psi_\varepsilon^0$ satisfy the condition
\begin{equation}
\label{cond:key}
A \, \varepsilon^\frac{1}{2} \, \boK_\varepsilon^0 \leq 1,
\end{equation}
we have the following statements.

\noindent $(i)$ There exists a positive number
\begin{equation}
\label{cond:Teps}
T_\varepsilon \geq \frac{1}{A (\boK_\varepsilon^0)^2},
\end{equation}
such that both the unique solution $\Psi_\varepsilon$ to~\eqref{NLS-eps} with initial datum $\Psi_\varepsilon^0$, and the unique solution $\Psi$ to~\eqref{CS} with initial datum $\Psi^0$ are well-defined on the time interval $[0, T_\varepsilon]$.

\noindent $(ii)$ We have the error estimate
\begin{equation}
\label{eq:est-error}
\big\| \Psi_\varepsilon(\cdot, t) - \Psi(\cdot, t) \big\|_{H^{k - 2}} \leq \Big( \big\| \Psi_\varepsilon^0 - \Psi^0 \big\|_{H^{k - 2}} + A \varepsilon \boK_\varepsilon^0 \big( 1 + (\boK_\varepsilon^0)^3 \big) \Big) \, e^{A (\boK_\varepsilon^0)^2 t},
\end{equation}
for any $0 \leq t \leq T_\varepsilon$.
\end{thm}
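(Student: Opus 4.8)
The plan is to regard \eqref{NLS-eps} as an $O(\varepsilon)$ perturbation of \eqref{CS} and to combine a uniform-in-$\varepsilon$ a priori bound for $\Psi_\varepsilon$ in a weighted Sobolev norm with a Gronwall estimate for the difference $w := \Psi_\varepsilon - \Psi$. Setting $a_\varepsilon := (1 - \varepsilon |\Psi_\varepsilon|^2)^\frac{1}{2}$ and Taylor-expanding the coefficients of \eqref{NLS-eps}, I would first write
\[
i \partial_t \Psi_\varepsilon + \Delta \Psi_\varepsilon + \tfrac{1}{2} |\Psi_\varepsilon|^2 \Psi_\varepsilon = \varepsilon\, R_\varepsilon,
\]
where the remainder $R_\varepsilon$ collects $\varepsilon^{-1}(a_\varepsilon - 1)\Delta \Psi_\varepsilon$, the correction $|\Psi_\varepsilon|^2\big(\tfrac{1}{1 + a_\varepsilon} - \tfrac{1}{2}\big)\Psi_\varepsilon$ to the cubic term, and the divergence term of \eqref{NLS-eps}. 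Each apparent factor $\varepsilon^{-1}$ is compensated, so that $R_\varepsilon$ depends on $\Psi_\varepsilon$ and \emph{at most two} of its derivatives, multiplied by smooth bounded functions of $\varepsilon |\Psi_\varepsilon|^2$ (here \eqref{cond:petit-Psi-eps}, guaranteed by \eqref{cond:key}, keeps $a_\varepsilon$ bounded away from $0$).

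The core of the argument is a uniform control of
\[
\boN_\varepsilon(t)^2 := \big\| \Psi_\varepsilon(\cdot, t) \big\|_{H^k}^2 + \varepsilon \big\| \Psi_\varepsilon(\cdot, t) \big\|_{\dot{H}^{k + 1}}^2 + \varepsilon^2 \big\| \Psi_\varepsilon(\cdot, t) \big\|_{\dot{H}^{k + 2}}^2 ,
\]
which satisfies $\boN_\varepsilon(0) \lesssim \boK_\varepsilon^0$. The delicate point is the quasilinear top-order term $a_\varepsilon \Delta \Psi_\varepsilon$: commuting a derivative $\partial^\alpha$ through its variable real coefficient would cost one derivative. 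I would neutralise this by running the energy estimate with the weight $a_\varepsilon^{-1}$, that is by differentiating $\int_{\R^N} a_\varepsilon^{-1} |\partial^\alpha \Psi_\varepsilon|^2$: the principal contribution of $a_\varepsilon \Delta \Psi_\varepsilon$ then reduces to $-\!\int \langle \Delta \partial^\alpha \Psi_\varepsilon, \partial^\alpha \Psi_\varepsilon \rangle_\C = \int |\nabla \partial^\alpha \Psi_\varepsilon|^2$, which is real and drops out of the imaginary-part identity, while the surviving commutators carry a factor $\nabla a_\varepsilon = O(\varepsilon)$. Equivalently — and this is the cleanest way to see that no derivative is ever lost — one may run the estimate on the Landau--Lifshitz solution $m$ attached to $\Psi_\varepsilon$ through Corollary~\ref{cor:Cauchy-Psi-eps}$(vi)$, whose cross-product structure makes the principal part skew-symmetric at every order; the weights $\varepsilon, \varepsilon^2$ above are precisely those for which the corresponding Landau--Lifshitz energies remain bounded as $\varepsilon \to 0$ despite the singular anisotropy $\lambda_1 = \lambda_3 = \varepsilon^{-1}$ (the regularity $\Psi_\varepsilon^0 \in H^{k+3}$ legitimising the $\dot{H}^{k+2}$ tier). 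Together with Moser estimates for the cubic and divergence nonlinearities, this yields $\frac{d}{dt} \boN_\varepsilon^2 \leq A\, \boN_\varepsilon^2 (1 + \boN_\varepsilon^2)$, whose cubic character produces the lifespan \eqref{cond:Teps} and the uniform bound $\boN_\varepsilon(t) \lesssim \boK_\varepsilon^0$ on $[0, T_\varepsilon]$; the standard constant-coefficient estimate for \eqref{CS} grants $\Psi$ the same lifespan, giving statement $(i)$.

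With these bounds in hand, $w$ solves
\[
i \partial_t w + \Delta w = - \tfrac{1}{2} \big( |\Psi_\varepsilon|^2 \Psi_\varepsilon - |\Psi|^2 \Psi \big) - \varepsilon\, R_\varepsilon ,
\]
and I would close the proof with an $H^{k - 2}$ energy estimate. The loss of two derivatives is dictated by $R_\varepsilon$: since it carries two derivatives of $\Psi_\varepsilon$, in $H^{k-2}$ one only needs $\| \Psi_\varepsilon \|_{H^k}$, bounded by $\boK_\varepsilon^0$ uniformly in $\varepsilon$ by the previous step, so that $\| R_\varepsilon \|_{H^{k - 2}} \lesssim (\boK_\varepsilon^0)^3 \big( 1 + (\boK_\varepsilon^0)^3 \big)$, whereas an $H^k$ estimate would instead meet the uncontrolled size $\varepsilon^{-1}$. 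Because $k - 2 > N/2$, the space $H^{k-2}$ is an algebra, the Laplacian contributes nothing, and the cubic difference is Lipschitz with constant $\lesssim \| \Psi_\varepsilon \|_{H^{k - 2}}^2 + \| \Psi \|_{H^{k - 2}}^2 \lesssim (\boK_\varepsilon^0)^2$. Hence
\[
\frac{d}{dt} \big\| w(\cdot, t) \big\|_{H^{k - 2}} \leq A (\boK_\varepsilon^0)^2 \big\| w(\cdot, t) \big\|_{H^{k - 2}} + A \varepsilon\, (\boK_\varepsilon^0)^3 \big( 1 + (\boK_\varepsilon^0)^3 \big) ,
\]
and Gronwall's lemma gives \eqref{eq:est-error}, the factor $\boK_\varepsilon^0 \big( 1 + (\boK_\varepsilon^0)^3 \big)$ arising as the source $\varepsilon^{-1}\| R_\varepsilon \|_{H^{k-2}}$ divided by the growth rate $(\boK_\varepsilon^0)^2$ when the exponential is factored out.

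The main obstacle is the a priori bound of the second step, namely controlling $\boN_\varepsilon$ with constants independent of $\varepsilon$. The two intertwined difficulties are the loss of one derivative caused by the variable coefficient $a_\varepsilon$ in front of the Laplacian, and the singular $\varepsilon^{-1}$ anisotropy hidden in \eqref{NLS-eps}; the device resolving both is the weighted energy $\int a_\varepsilon^{-1}|\partial^\alpha \Psi_\varepsilon|^2$ (equivalently, working in the geometric Landau--Lifshitz variables), which restores the skew-symmetry of the principal part and shows that the weights $\varepsilon, \varepsilon^2$ exactly balance the singular terms. Everything else — the Moser and commutator estimates, the handling of the divergence nonlinearity, and the final Gronwall argument — is then routine but lengthy.
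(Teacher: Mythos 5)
Your proposal follows essentially the same route as the paper: the consistency decomposition with remainder $\varepsilon \boR_\varepsilon$, a uniform a priori bound on the weighted norm $\|\Psi_\varepsilon\|_{H^k} + \varepsilon^{1/2}\|\nabla\Psi_\varepsilon\|_{\dot H^k} + \varepsilon\|\Delta\Psi_\varepsilon\|_{\dot H^k}$ obtained by passing to the Landau--Lifshitz variables (Proposition~\ref{prop:LL-energy-estimate} and Corollary~\ref{cor:NLS-energy-estimate}, which exploit exactly the geometric cancellation you invoke), and a Gronwall estimate for $\Psi_\varepsilon - \Psi$ in $H^{k-2}$ with $\|\boR_\varepsilon\|_{H^{k-2}} \lesssim (\boK_\varepsilon^0)^3\big(1 + (\boK_\varepsilon^0)^3\big)$. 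The one point to tighten is the Riccati inequality: the paper's version is purely quadratic, $[\Sigma_\varepsilon^{k+2}]'(t) \leq C\, \Sigma_\varepsilon^{k+2}(t)^2$ (the $L^\infty$ prefactors being absorbed into the energy by Sobolev embedding), which is what yields the lifespan $1/(A(\boK_\varepsilon^0)^2)$ even when $\boK_\varepsilon^0$ is small, whereas your $\boN_\varepsilon^2(1+\boN_\varepsilon^2)$ would only give $T_\varepsilon \gtrsim \min\big(1, (\boK_\varepsilon^0)^{-2}\big)$.
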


Theorem~\ref{thm:conv-CLS} does not only rigorously state the convergence of the Landau-Lifshitz equation towards the cubic Schr\"odinger equation in any dimension. It also quantifies this convergence in the spirit of what we already proved for the Sine-Gordon regime in~\cite{deLaGra2} (see statement $(iv)$ of~\cite[Theorem~1]{deLaGra2}). The assumptions $k>N/2+2$ in Theorem~\ref{thm:conv-CLS} originates in our choice to quantify this convergence. They are taylored in order to obtain the $\varepsilon$ factor in the right-hand side of the error estimate~\eqref{eq:est-error} since we expect this order of convergence to be sharp.

This claim relies on the study of the solitons of the one-dimensional Landau-Lifshitz and cubic Schr\"odinger equations. In Appendix~\ref{sec:solitons}, we classify the solitons $m_{c, \omega}$ with speed $c$ and angular velocity $\omega$ of the Landau-Lifshitz equation when $\lambda = \lambda_1 = \lambda_3$ (see Theorem~\ref{thm:soliton}). We then prove that their difference with respect to the corresponding bright solitons $\Psi_{c, \omega}$ of the cubic Schr\"odinger equation is of exact order $\varepsilon$ as the error factor in~\eqref{eq:est-error} (see Proposition~\ref{prop:diff-solitons}).

It is certainly possible to show only convergence under weaker assumptions by using compactness arguments as for the derivation of similar asymptotic regimes (see e.g.~\cite{ShatZen1, ChirRou2, GermRou1} concerning Schr\"odinger-like equations).

Observe that smooth solutions for both the Landau-Lifshitz and the cubic Schr\"odinger equations are known to exist when the integer $k$ satisfies the condition $k > N/2 + 1$. The additional assumption $k > N/2 + 2$ in Theorem~\ref{thm:conv-CLS} is related to the fact that our proof of~\eqref{eq:est-error} requires a uniform control of the difference $\Psi_\varepsilon - \Psi$, which follows from the Sobolev embedding theorem of $H^{k - 2}(\R^N)$ into $L^\infty(\R^N)$.

Similarly, the fact that $\Psi_\varepsilon^0$ is taken in $H^{k + 3}(\R^N)$ instead of $H^{k + 2}(\R^N)$, which is enough to define the quantity $\boK_\varepsilon^0$, is related to the loss of one derivative for establishing the flow continuity in statement $(iii)$ of Corollary~\ref{cor:Cauchy-Psi-eps}.

Finally, the loss of two derivatives in the error estimate~\eqref{eq:est-error} can be partially recovered by combining standard interpolation theory with the estimates in Proposition~\ref{prop:estim-eps} and Lemma~\ref{lem:estim-CS}. Under the assumptions of Theorem~\ref{thm:conv-CLS}, the solutions $\Psi_\varepsilon$ converge towards the solution $\Psi$ in $\boC^0([0, T_\varepsilon], H^s(\R^N))$ for any $0 \leq s < k$, when $\Psi_\varepsilon^0$ tends to $\Psi^0$ in $H^{k + 2}(\R^N)$ as $\varepsilon \to 0$, but the error term is not necessarily of order $\varepsilon$ due to the interpolation process.

Note here that condition~\eqref{cond:key} is not really restrictive in order to analyze such a convergence. At least when $\Psi_\varepsilon^0$ tends to $\Psi^0$ in $H^{k + 2}(\R^N)$ as $\varepsilon \to 0$, the quantity $\boK_\varepsilon^0$ tends to twice the norm $\| \Psi^0 \|_{H^k}$ in the limit $\varepsilon \to 0$, so that condition~\eqref{cond:key} is always fulfilled. Moreover, the error estimate~\eqref{eq:est-error} is available on a time interval of order $1/\| \Psi^0 \|_{H^k}^2$, which is similar to the minimal time of existence of the smooth solutions to the cubic Schr\"odinger equation (see Lemma~\ref{lem:estim-CS} below).

Apart from the intrinsic interest of Theorem~\ref{thm:conv-CLS}, it is well-known that deriving asymptotic regimes is a powerful tool in order to tackle the analysis of intricate equations. In this direction, we expect that our rigorous derivation of the cubic Schr\"odinger regime will be a useful tool in order to describe the dynamical properties of the Landau-Lifshitz equation, in particular the role played by the solitons in this dynamics (see e.g.~\cite{BetGrSm2, GravSme1} where this strategy was developed in order to prove the asymptotic stability of the dark solitons of the Gross-Pitaevskii equation by using its link with the Korteweg-de Vries equation~\cite{ChirRou2, BeGrSaS2, BeGrSaS3}).

The rest of the paper is mainly devoted to the proof of Theorem~\ref{thm:conv-CLS}. In Section~\ref{sec:strategy}, we explain our strategy for this proof. Section~\ref{sec:details} gathers the proof of Corollary~\ref{cor:Cauchy-Psi-eps}, as well as the detailed proofs of the main steps in the proof of Theorem~\ref{thm:conv-CLS}. Finally, Appendix~\ref{sec:solitons} deals with the classification of the solitons of the Landau-Lifshitz equation when $\lambda = \lambda_1 = \lambda_3$, and with their convergence towards the bright solitons of the cubic Schr\"odinger equation.

%%%%%%%%%%%%%%%%%%%%%%%%%%%%%%%%%%%%%%
%%%%%%%%%%%%%%%%%%%%%%%%%%%%%%%%%%%%%%
%%%%%%%%%%%%%%%%%%%%%%%%%%%%%%%%%%%%%%
\numberwithin{equation}{section}
\numberwithin{thm}{section}
\section{Strategy of the proof of Theorem~\ref{thm:conv-CLS}}
\label{sec:strategy}
%%%%%%%%%%%%%%%%%%%%%%%%%%%%%%%%%%%%%%
%%%%%%%%%%%%%%%%%%%%%%%%%%%%%%%%%%%%%%
%%%%%%%%%%%%%%%%%%%%%%%%%%%%%%%%%%%%%%

The proof relies on the consistency between the Schr\"odinger equations~\eqref{NLS-eps} and~\eqref{CS} in the limit $\varepsilon \to 0$. Indeed, we can recast~\eqref{NLS-eps} as
\begin{equation}
\label{eq:consistency}
i \partial_t \Psi_\varepsilon + \Delta \Psi_\varepsilon + \frac{1}{2} |\Psi_\varepsilon|^2 \Psi_\varepsilon = \varepsilon \boR_\varepsilon,
\end{equation}
where the remainder term $\boR_\varepsilon$ is given by
\begin{equation}
\label{def:R-eps}
\boR_\varepsilon := \frac{|\Psi_\varepsilon|^2}{1 + ( 1 - \varepsilon |\Psi_\varepsilon|^2 )^\frac{1}{2}} \Delta \Psi_\varepsilon - \frac{|\Psi_\varepsilon|^4}{2 (1 + (1 - \varepsilon |\Psi_\varepsilon|^2)^\frac{1}{2})^2} \Psi_\varepsilon - \div \Big( \frac{\langle \Psi_\varepsilon, \nabla \Psi_\varepsilon \rangle_\C}{(1 - \varepsilon |\Psi_\varepsilon|^2)^\frac{1}{2}} \Big) \Psi_\varepsilon .
\end{equation}
In order to establish the convergence towards the cubic Schr\"odinger equation, our main goal is to control the remainder term $\boR_\varepsilon$ on a time interval $[0, T_\varepsilon]$ as long as possible. In particular, we have to show that the maximal time $T_\varepsilon$ for this control does not vanish in the limit $\varepsilon \to 0$.

The strategy for reaching this goal is reminiscent from a series of papers concerning the rigorous derivation of long-wave regimes for various Schr\"odinger-like equations (see~\cite{ShatZen1, BetDaSm1, BeGrSaS2, ChirRou2, BeGrSaS3, BeDaGSS1, Chiron9, GermRou1, deLaGra2} and the references therein). The main argument is to perform suitable energy estimates on the solutions $\Psi_\varepsilon$ to~\eqref{NLS-eps}. These estimates provide Sobolev bounds for the remainder term $\boR_\varepsilon$, which are used to control the differences $u_\varepsilon := \Psi_\varepsilon - \Psi$ with respect to the solutions $\Psi$ to~\eqref{CS}. This further control is also derived from energy estimates. 

Concerning the estimates of the solutions $\Psi_\varepsilon$, we rely on the equivalence with the solutions $m$ to~\eqref{LL} in Corollary~\ref{cor:Cauchy-Psi-eps}. Using this equivalence, we can go back to the computations made in~\cite{deLaGra2} for the derivation of the Sine-Gordon regime of the Landau-Lifshitz equation. More precisely, given a positive number $T$ and a sufficiently smooth solution $m : \R^N \times [0, T] \to \S^2$ to~\eqref{LL}, we define the energy $E_{\rm LL}^k$ of order $k \geq 2$ as
\begin{equation}
\label{def:E-LL-k}
\begin{split}
E_{\rm LL}^k(t) := \frac{1}{2} \Big( & \| \partial_t m(\cdot, t) \|_{\dot{H}^{k - 2}}^2 + \| \Delta m(\cdot, t) \|_{\dot{H}^{k - 2}}^2 + (\lambda_1 + \lambda_3) \big( \| \nabla m_1(\cdot, t) \|_{\dot{H}^{k - 2}}^2\\
& + \| \nabla m_3(\cdot, t) \|_{\dot{H}^{k - 2}}^2 \big) + \lambda_1 \lambda_3 \big( \| m_1(\cdot, t) \|_{\dot{H}^{k - 2}}^2 + \| m_3(\cdot, t) \|_{\dot{H}^{k - 2}}^2 \big) \Big),
\end{split}
\end{equation}
for any $t \in [0, T]$. In the regime $\lambda_1 = \lambda_3 = 1/\varepsilon$, we can prove the following improvement of the computations made in~\cite[Proposition 1]{deLaGra2}.

\begin{prop}
\label{prop:LL-energy-estimate}
Let $0 < \varepsilon < 1$, and $k \in \N$, with $k > N/2 + 1$. Assume that 
\begin{equation}
 \label{cond:lambda}
\lambda_1 = \lambda_3 = \frac{1}{\varepsilon},
\end{equation}
and that $m$ is a solution to~\eqref{LL} in $\boC^0([0, T], \boE^{k + 4}(\R^N))$, with $\partial_t m \in \boC^0([0, T], H^{k + 2}(\R^N))$. Given any integer $2 \leq \ell \leq k + 2$, the energies $E_{\rm LL}^\ell$ are of class $\boC^1$ on $[0, T]$, and there exists a positive number $C_k$, depending possibly on $k$, but not on $\varepsilon$, such that their derivatives satisfy
\begin{equation}
\label{eq:energy-estimate-LL}
\big[ E_{\rm LL}^\ell \big]'(t) \leq \frac{C_k}{\varepsilon} \Big( \| m_1(\cdot, t) \|_{L^\infty}^2 + \| m_3(\cdot, t) \|_{L^\infty}^2 + \| \nabla m(\cdot, t) \|_{L^\infty}^2 \Big) \, \Big( E_{\rm LL}^\ell(t) + E_{\rm LL}^{\ell - 1}(t) \Big),
\end{equation}
for any $t \in [0, T]$. For $\ell - 1 = 1$, the quantity $E_{\rm LL}^1(t)$ in this expression is equal to the Landau-Lifshitz energy $E_{\rm LL}(m(\cdot, t))$.
\end{prop}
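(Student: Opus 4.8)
The plan is to establish \eqref{eq:energy-estimate-LL} by differentiating the energy $E_{\rm LL}^\ell$ of \eqref{def:E-LL-k} along the flow and carefully tracking the powers of $\varepsilon$ carried by the anisotropy. Since the statement refines \cite[Proposition~1]{deLaGra2}, I would reuse the algebraic identities proved there and concentrate on the bookkeeping specific to the regime \eqref{cond:lambda}. First I would check that $t \mapsto E_{\rm LL}^\ell(t)$ is of class $\boC^1$: for $2 \le \ell \le k+2$ every term of $E_{\rm LL}^\ell$ involves at most $\ell \le k+2$ spatial derivatives of $m$ and $\ell-2 \le k$ derivatives of $\partial_t m$ in $L^2$, and differentiating \eqref{LL} once in time expresses $\partial_t^2 m$ through quantities still controlled by the assumed regularity $m \in \boC^0([0,T],\boE^{k+4}(\R^N))$, $\partial_t m \in \boC^0([0,T],H^{k+2}(\R^N))$. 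The resulting identity can then be justified rigorously by smoothing the initial data, using the flow continuity of Theorem~\ref{thm:Cauchy-LL}, and passing to the limit.

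Writing \eqref{LL} as $\partial_t m = - m \times \Delta m + m \times J(m)$ and using $m \times m = 0$, one has the explicit form $m \times J(m) = \big( -\lambda_3 m_2 m_3,\ (\lambda_3 - \lambda_1)\, m_1 m_3,\ \lambda_1 m_1 m_2 \big)$, whose middle component vanishes precisely in the regime \eqref{cond:lambda}. I would then compute $[E_{\rm LL}^\ell]'$ at the level of $\dot{H}^{\ell-2}$, i.e. after applying $\partial^\alpha$ with $|\alpha| = \ell-2$. The combination $\tfrac12 \| \partial_t m \|^2 + \tfrac12 \| \Delta m \|^2$, together with the $(\lambda_1 + \lambda_3)$- and $\lambda_1 \lambda_3$-weighted pieces, is designed so that the contributions that would otherwise lose a derivative cancel: the dangerous factor $m \times \Delta \partial_t m$ arising in $\langle \partial_t m, \partial_t^2 m \rangle$ is handled through the skew-symmetry of $v \mapsto m \times v$ and integration by parts, while the anisotropic part of $\partial_t^2 m$ is absorbed by the time derivatives of the anisotropy terms. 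This is exactly the cancellation carried out in \cite[Proposition~1]{deLaGra2}, which I would import.

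After the cancellations, the non-anisotropic (Schr\"odinger-map) contributions are bounded by the standard quasilinear estimate $\| \nabla m \|_{L^\infty}^2\, E_{\rm LL}^\ell$, obtained from Kato--Ponce commutator and Moser product estimates in $\dot{H}^{\ell-2}$ and the embedding $H^{k-1} \hookrightarrow L^\infty$ (valid since $k > N/2+1$). For the anisotropic contributions, which carry the powers of $1/\varepsilon$, I would crucially use the constraint $|m| = 1$. Differentiating $m_1^2 + m_2^2 + m_3^2 = 1$ gives $m_2\, \partial_j m_2 = -(m_1 \partial_j m_1 + m_3 \partial_j m_3)$, so that, $m_2$ remaining of order one in this regime, $|\nabla m_2| \lesssim (|m_1| + |m_3|)\, |\nabla m|$, with analogous bounds for higher derivatives of $m_2$; thus each derivative of the large component $m_2$ carries two small factors, one of which is an $m_1$ or $m_3$. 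Combined with the algebraic symmetry of the anisotropy (for instance $\langle m_1, m_2 m_3 \rangle_{L^2} = \langle m_3, m_1 m_2 \rangle_{L^2}$ and its $\dot{H}^{\ell-2}$ analogue up to commutators), this guarantees that every power of $1/\varepsilon$ produced in $[E_{\rm LL}^\ell]'$ is matched by a factor $\| m_1 \|_{L^\infty}$, $\| m_3 \|_{L^\infty}$ or $\| \nabla m \|_{L^\infty}$. Absorbing the remaining Sobolev norms of $(m_1, m_3)$ into $E_{\rm LL}^\ell + E_{\rm LL}^{\ell-1}$ at the appropriate anisotropy weights and using Young's inequality then yields \eqref{eq:energy-estimate-LL} with a constant $C_k$ independent of $\varepsilon$.

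The main obstacle is precisely this $\varepsilon$-uniform bookkeeping. A naive differentiation of the $1/\varepsilon$- and $1/\varepsilon^2$-weighted pieces of $E_{\rm LL}^\ell$ produces terms of apparent size $1/\varepsilon^2$ and $1/\varepsilon^3$, and the whole point is to bring each of them down to a single power $1/\varepsilon$. Two mechanisms must be implemented together: the algebraic symmetry of the anisotropy (together with $\lambda_1 = \lambda_3$) cancels the leading top-order contributions, and the constraint-based bound $|\nabla m_2| \lesssim (|m_1| + |m_3|)\, |\nabla m|$ supplies the extra small factors that convert each surviving power of the anisotropy into one of the $L^\infty$ norms on the right-hand side of \eqref{eq:energy-estimate-LL}. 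Verifying that these effects combine to leave exactly one factor $1/\varepsilon$ per term --- so that the crude bound $\| m \|_{L^\infty}^2 = 1$ is replaced by the sharp $\| m_1 \|_{L^\infty}^2 + \| m_3 \|_{L^\infty}^2$ --- is the delicate heart of the argument. A minor additional point is the endpoint $\ell = 2$, where $E_{\rm LL}^{\ell-1} = E_{\rm LL}^1$ must be read as the conserved Landau-Lifshitz energy $E_{\rm LL}(m(\cdot,t))$, so that the constraint identities and the conservation in Theorem~\ref{thm:Cauchy-LL}$(v)$ are used to close the estimate.
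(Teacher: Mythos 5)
Your overall architecture (differentiate the energy, exploit a geometric cancellation to avoid losing a derivative, then track the powers of $\varepsilon$) matches the paper's, and you correctly isolate the two difficulties. But the mechanism you propose for the $\varepsilon$-uniform bookkeeping has a genuine gap. You derive the small factors on derivatives of $m_2$ from the constraint identity $m_2\,\partial_j m_2 = -(m_1\partial_j m_1 + m_3\partial_j m_3)$, concluding $|\nabla m_2| \lesssim (|m_1|+|m_3|)\,|\nabla m|$ because ``$m_2$ remains of order one in this regime.'' That division by $m_2$ is not available: Proposition~\ref{prop:LL-energy-estimate} is stated for an arbitrary solution in $\boC^0([0,T],\boE^{k+4}(\R^N))$, with no non-vanishing or smallness hypothesis, and $m_2$ may vanish. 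Without that bound you have no way to convert the $1/\varepsilon$- and $1/\varepsilon^2$-weighted contributions into the factors $\|m_1\|_{L^\infty}^2 + \|m_3\|_{L^\infty}^2$ appearing in \eqref{eq:energy-estimate-LL}, and the final absorption step (``at the appropriate anisotropy weights'') is asserted rather than performed.

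The paper avoids this entirely by first deriving the second-order equation \eqref{eq:second-LL}, whose source term $F_\varepsilon(m)$ in \eqref{usa} already exhibits, purely algebraically, a quadratic factor in $(m_1,m_3,\nabla m)$ on every $1/\varepsilon$-term and a cubic factor $(m_1^2+m_3^2)(m_1e_1+m_3e_3)$ on the $1/\varepsilon^2$-term. These factors come from the pointwise identities $\langle m,\partial_i m\rangle_{\R^3}=0$, $\langle m,\partial_{ii}m\rangle_{\R^3}+|\partial_i m|^2=0$, etc., which eliminate the $m_2$-components without ever dividing by $m_2$. The energy \eqref{def:E-LL-k} is then exactly the quantity whose derivative equals $\langle\partial_t m, F_\varepsilon(m)\rangle_{\dot H^{\ell-2}}$, so each $\varepsilon^{-j}$-weighted Sobolev norm of $(m_1,m_3)$ produced by the Moser estimates pairs, via Young's inequality, with the matching $\lambda$-weighted term inside $E_{\rm LL}^\ell + E_{\rm LL}^{\ell-1}$, leaving a single overall factor $1/\varepsilon$. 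The remaining top-order term $-2\sum_{i,j}\partial_{ij}(\langle\partial_i m,\partial_j m\rangle_{\R^3})\,m$ is handled by integrating by parts and using the cancellation $\langle m\times v,m\rangle_{\R^3}=0$ after substituting the Landau--Lifshitz equation for $\partial_t m$ --- this is the cancellation you import from \cite{deLaGra2}, which is fine, but it does not repair the missing small factors. To fix your argument you would need to replace the constraint-based derivative bound by the explicit algebraic structure of $F_\varepsilon(m)$ (or an equivalent computation that never divides by $m_2$).
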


As for the proof of~\cite[Proposition 1]{deLaGra2}, the estimates in Proposition~\ref{prop:LL-energy-estimate} rely on the following identity. Under assumption~\eqref{cond:lambda}, we derive from~\eqref{LL} the second-order equation
\begin{equation}
\label{eq:second-LL}
\partial_{tt} m + \Delta^2 m - \frac{2}{\varepsilon} \Big( \Delta m_1 e_1 + \Delta m_3 e_3 \Big) 
+ \frac{1}{\varepsilon^2} \Big( m_1 e_1 + m_3 e_3 \Big) = F_\varepsilon(m),
\end{equation}
where 
\begin{equation}
\label{usa}
\begin{split}
F_\varepsilon(m) := \sum_{1 \leq i, j \leq N} & \Big( \partial_i \big( 2 \langle \partial_i m, \partial_j m \rangle_{\R^3} \partial_j m - |\partial_j m|^2 \partial_i m \big) - 2 \partial_{ij} \big( \langle \partial_i m, \partial_j m \rangle_{\R^3} m \big) \Big)\\
- \frac{1}{\varepsilon} \Big( & (m_1^2 + 3 m_3^2) \Delta m_1 e_1 + (3 m_1^2 + m_3^2) \Delta m_3 e_3 - 2 m_1 m_3 (\Delta m_1 e_3 + \Delta m_3 e_1)\\
& + (m_1^2 + m_3^2) \Delta m_2 e_2 - |\nabla m|^2 (m_1 e_1 + m_3 e_3) + \nabla \big( m_1^2 + m_3^2 \big) \cdot \nabla m \Big)\\
+ \frac{1}{\varepsilon^2} & \Big( (m_1^2+ m_3^2) (m_1 e_1+m_3 e_3) \Big).
\end{split}
\end{equation}
Note here that the computation of this formula uses the pointwise identities
$$\langle m, \partial_i m \rangle_{\R^3} = \langle m, \partial_{ii} m \rangle_{\R^3} + |\partial_i m|^2 = \langle m, \partial_{iij} m \rangle_{\R^3} + 2 \langle \partial_i m, \partial_{ij} m \rangle_{\R^3} + \langle \partial_j m, \partial_{ii} m \rangle_{\R^3} = 0,$$
which hold for any $1 \leq i, j \leq N$, due to the property that $m$ is valued into the sphere $\S^2$.

Since $\lambda_1 = \lambda_3$, the expression of the function $F_\varepsilon(m)$ in~\eqref{usa} is simpler than the one that was computed in~\cite{deLaGra2}. In contrast with the formula in~\cite[Proposition 1]{deLaGra2}, the multiplicative factor in the right-hand side of~\eqref{eq:energy-estimate-LL} now only depends on the uniform norms of the functions $m_1$, $m_3$ and $\nabla m$. This property is crucial in order to use these estimates in the cubic Schr\"odinger regime.

The next step of the proof is indeed to express the quantities $E_{\rm LL}^k$ in terms of the functions $\Psi_\varepsilon$. Assume that these functions $\Psi_\varepsilon : \R^N \times [0, T] \to \C$ are smooth enough. In view of~\eqref{def:Psi-eps} and~\eqref{NLS-eps}, it is natural to set
\begin{equation}
 \label{def:gE-NLS}
\begin{split}
\gE_\varepsilon^k&(t) := \frac{1}{2} \bigg( \big\| \Psi_\varepsilon(\cdot, t) \big\|_{\dot{H}^{k - 2}}^2 + \big\| \varepsilon \partial_t \Psi_\varepsilon(\cdot, t) - i \Psi_\varepsilon(\cdot, t) \big\|_{\dot{H}^{k - 2}}^2 + \varepsilon^2 \big\| \Delta \Psi_\varepsilon(\cdot, t) \big\|_{\dot{H}^{k - 2}}^2\\
+ & \varepsilon \Big( \big\| \partial_t (1 - \varepsilon |\Psi_\varepsilon(\cdot, t)|^2)^\frac{1}{2} \big\|_{\dot{H}^{k - 2}}^2 + \big\| \Delta (1 - \varepsilon |\Psi_\varepsilon(\cdot, t)|^2)^\frac{1}{2} \big\|_{\dot{H}^{k - 2}}^2 + 2 \big\| \nabla \Psi_\varepsilon(\cdot, t) \big\|_{\dot{H}^{k - 2}}^2 \Big) \bigg),
\end{split}
\end{equation}
for any $k \geq 2$ and any $t \in [0, T]$. Combining the local well-posedness result of Corollary~\ref{cor:Cauchy-Psi-eps} and the computations in Proposition~\ref{prop:LL-energy-estimate}, we obtain

\begin{cor}
\label{cor:NLS-energy-estimate}
Let $0 < \varepsilon < 1$, and $k \in \N$, with $k > N/2 + 1$. Consider a function $\Psi_\varepsilon^0 \in H^{k + 5}(\R^N)$ satisfying condition~\eqref{cond:petit-Psi-eps}, and let $\Psi_\varepsilon : \R^N \times[0, T_\varepsilon) \to \C$ be the corresponding solution to~\eqref{NLS-eps} given by Corollary~\ref{cor:Cauchy-Psi-eps}. Given any integer $2 \leq \ell \leq k + 2$ and any number $0 \leq T < T_\varepsilon$, the energies $\gE_\varepsilon^\ell$ are of class $\boC^1$ on $[0, T]$, and there exists a positive number $C_k$, depending possibly on $k$, but not on $\varepsilon$, such that their derivatives satisfy
\begin{equation}
\label{eq:energy-estimate-NLS}
\big[ \gE_\varepsilon^\ell \big]'(t) \leq C_k \bigg( \| \Psi_\varepsilon(\cdot, t) \|_{L^\infty}^2 + \| \nabla \Psi_\varepsilon(\cdot, t) \|_{L^\infty}^2 + \varepsilon \Big\| \frac{\langle \Psi_\varepsilon(\cdot, t), \nabla \Psi_\varepsilon(\cdot, t) \rangle_\C}{(1 - \varepsilon |\Psi_\varepsilon(\cdot, t)|^2)^\frac{1}{2}} \Big\|_{L^\infty}^2 \bigg) \, \big( \gE_\varepsilon^\ell(t) + \varepsilon^{\delta_{\ell, 2}} \gE_\varepsilon^{\ell - 1}(t) \big),
\end{equation}
for any $t \in [0, T]$. For $\ell - 1 = 1$, the quantity $\gE_\varepsilon^1(t)$ in this expression is equal to the nonlinear Schr\"odinger energy $\gE_\varepsilon(\Psi_\varepsilon(\cdot, t))$.
\end{cor}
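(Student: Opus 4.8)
The plan is to deduce Corollary~\ref{cor:NLS-energy-estimate} from Proposition~\ref{prop:LL-energy-estimate} by transporting the differential inequality~\eqref{eq:energy-estimate-LL} through the explicit correspondence~\eqref{eq:m-Psi-eps} between the solution $\Psi_\varepsilon$ to~\eqref{NLS-eps} and the associated solution $m$ to~\eqref{LL}. First I would invoke statement $(vi)$ of Corollary~\ref{cor:Cauchy-Psi-eps} to attach to $\Psi_\varepsilon$ the map $m$ defined by~\eqref{eq:m-Psi-eps}, which solves~\eqref{LL} with $\lambda_1 = \lambda_3 = 1/\varepsilon$. The computation rests on the pointwise identities $\check{m} = m_1 + i m_3 = \varepsilon^\frac{1}{2} e^{-\frac{it}{\varepsilon}} \Psi_\varepsilon$ and $m_2 = (1 - \varepsilon |\Psi_\varepsilon|^2)^\frac{1}{2}$. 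Since multiplication by the spatially constant unimodular factor $e^{-\frac{it}{\varepsilon}}$ preserves every homogeneous Sobolev norm $\dot{H}^{k - 2}$, a term-by-term comparison of~\eqref{def:E-LL-k} and~\eqref{def:gE-NLS} will yield the exact relation $\gE_\varepsilon^\ell(t) = \varepsilon\, E_{\rm LL}^\ell(t)$ for every integer $\ell \geq 2$.

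The identities behind this relation are elementary but must be recorded with care. Differentiating $\check{m}$ in time gives $\partial_t \check{m} = \varepsilon^\frac{1}{2} e^{-\frac{it}{\varepsilon}} ( \partial_t \Psi_\varepsilon - \frac{i}{\varepsilon} \Psi_\varepsilon )$, so that $\| \partial_t m_1 \|_{\dot{H}^{k-2}}^2 + \| \partial_t m_3 \|_{\dot{H}^{k-2}}^2 = \frac{1}{\varepsilon} \| \varepsilon \partial_t \Psi_\varepsilon - i \Psi_\varepsilon \|_{\dot{H}^{k-2}}^2$, whereas $\| \Delta m_1 \|_{\dot{H}^{k-2}}^2 + \| \Delta m_3 \|_{\dot{H}^{k-2}}^2 = \varepsilon \| \Delta \Psi_\varepsilon \|_{\dot{H}^{k-2}}^2$, and likewise $\| \nabla m_1 \|^2 + \| \nabla m_3 \|^2 = \varepsilon \| \nabla \Psi_\varepsilon \|^2$ and $\| m_1 \|^2 + \| m_3 \|^2 = \varepsilon \| \Psi_\varepsilon \|^2$ in $\dot{H}^{k-2}$. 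The contributions of $m_2$ reproduce precisely the terms carrying $\partial_t (1 - \varepsilon |\Psi_\varepsilon|^2)^\frac{1}{2}$ and $\Delta (1 - \varepsilon |\Psi_\varepsilon|^2)^\frac{1}{2}$ in~\eqref{def:gE-NLS}, and inserting the anisotropy factor $1/\varepsilon$ turns these into $\varepsilon\, E_{\rm LL}^\ell = \gE_\varepsilon^\ell$. At the lowest level I would check separately, starting from $E_{\rm LL}(m) = \frac{1}{2} \int ( |\nabla m|^2 + \frac{1}{\varepsilon} (m_1^2 + m_3^2) )$ together with $\nabla m_2 = - \varepsilon \langle \Psi_\varepsilon, \nabla \Psi_\varepsilon \rangle_\C / (1 - \varepsilon |\Psi_\varepsilon|^2)^\frac{1}{2}$, that the energies coincide \emph{without} the factor $\varepsilon$, namely $\gE_\varepsilon^1(t) = E_{\rm LL}^1(t) = E_{\rm LL}(m(\cdot, t))$.

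The same substitutions control the multiplicative prefactor in~\eqref{eq:energy-estimate-LL}. They give $\| m_1 \|_{L^\infty}^2 + \| m_3 \|_{L^\infty}^2 \lesssim \varepsilon \| \Psi_\varepsilon \|_{L^\infty}^2$, and, splitting $\nabla m$ into its $\check{m}$-part and its $m_2$-part, $\| \nabla m \|_{L^\infty}^2 \lesssim \varepsilon \| \nabla \Psi_\varepsilon \|_{L^\infty}^2 + \varepsilon^2 \| \langle \Psi_\varepsilon, \nabla \Psi_\varepsilon \rangle_\C / (1 - \varepsilon |\Psi_\varepsilon|^2)^\frac{1}{2} \|_{L^\infty}^2$, so that the full prefactor of~\eqref{eq:energy-estimate-LL} is bounded by $\varepsilon$ times the prefactor of~\eqref{eq:energy-estimate-NLS}. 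Combining $[\gE_\varepsilon^\ell]' = \varepsilon [E_{\rm LL}^\ell]'$ with~\eqref{eq:energy-estimate-LL}, the factor $\varepsilon$ cancels the $1/\varepsilon$ in~\eqref{eq:energy-estimate-LL} and leaves $C_k$ times the $L^\infty$ prefactor multiplying $E_{\rm LL}^\ell + E_{\rm LL}^{\ell - 1}$. For $\ell \geq 3$ I substitute $E_{\rm LL}^\ell = \gE_\varepsilon^\ell / \varepsilon$ and $E_{\rm LL}^{\ell - 1} = \gE_\varepsilon^{\ell - 1} / \varepsilon$, and the resulting $1/\varepsilon$ is exactly absorbed by the $\varepsilon$ gained on the prefactor, which reproduces~\eqref{eq:energy-estimate-NLS} with lower-order term $\gE_\varepsilon^{\ell - 1}$. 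For $\ell = 2$, the only change is that $E_{\rm LL}^1 = \gE_\varepsilon^1$ carries no factor $1/\varepsilon$, so its contribution comes out as $\varepsilon\, \gE_\varepsilon^1$; this is the origin of the Kronecker factor $\varepsilon^{\delta_{\ell, 2}}$.

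The step requiring most care is the regularity bookkeeping needed to legitimately invoke Proposition~\ref{prop:LL-energy-estimate}, which demands $m \in \boC^0([0, T], \boE^{k + 4}(\R^N))$ with $\partial_t m \in \boC^0([0, T], H^{k + 2}(\R^N))$, and in particular the $\boC^1$ regularity in time of the energies $E_{\rm LL}^\ell$ for $2 \leq \ell \leq k + 2$. Starting from $\Psi_\varepsilon^0 \in H^{k + 5}(\R^N)$, statement $(iv)$ of Corollary~\ref{cor:Cauchy-Psi-eps} provides $\Psi_\varepsilon \in L^\infty([0, T], H^{k + 5}(\R^N))$; the flow continuity of statement $(iii)$ upgrades this to $\Psi_\varepsilon \in \boC^0([0, T], H^{k + 4}(\R^N))$, and feeding this back into~\eqref{NLS-eps} yields $\partial_t \Psi_\varepsilon \in \boC^0([0, T], H^{k + 2}(\R^N))$, using that $H^{k + 2}(\R^N)$ is an algebra since $k > N/2 + 1$. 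Through~\eqref{eq:m-Psi-eps}, these are precisely the hypotheses of Proposition~\ref{prop:LL-energy-estimate}, and this is where the seemingly generous assumption $\Psi_\varepsilon^0 \in H^{k + 5}(\R^N)$ is consumed. The $\boC^1$ character of $\gE_\varepsilon^\ell$ and the identification of $\gE_\varepsilon^1$ with the nonlinear Schr\"odinger energy then transfer directly from the corresponding assertions of Proposition~\ref{prop:LL-energy-estimate} through the exact relations established above.
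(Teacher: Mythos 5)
Your proposal is correct and follows essentially the same route as the paper: transfer the differential inequality of Proposition~\ref{prop:LL-energy-estimate} through the correspondence~\eqref{eq:m-Psi-eps}, using the exact identities $\gE_\varepsilon^\ell = \varepsilon E_{\rm LL}^\ell$ for $\ell \geq 2$, $\gE_\varepsilon^1 = E_{\rm LL}^1$, and the fact that the $L^\infty$ prefactor of~\eqref{eq:energy-estimate-LL} equals $\varepsilon$ times that of~\eqref{eq:energy-estimate-NLS}, with the Kronecker factor arising exactly as you describe. The only point where the paper is more explicit is the verification that $m_2 = (1 - \varepsilon |\Psi_\varepsilon|^2)^{1/2}$ lies in $\boC^0([0,T], \boE^{k+4}(\R^N))$, which it carries out by writing $m_2 = \eta(\varepsilon^{1/2}\Psi_\varepsilon)$ for a suitable cutoff $\eta$ and applying the Moser composition estimate~\eqref{eq:Moser-comp}.
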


In order to gain a control on the solutions $\Psi_\varepsilon$ to~\eqref{NLS-eps} from inequality~\eqref{eq:energy-estimate-NLS}, we now have to characterize the Sobolev norms, which are controlled by the energies $\gE_\varepsilon^k$. In this direction, we show

\begin{lem}
\label{lem:cont-energies-NLS}
Let $0 < \varepsilon < 1$, $T > 0$ and $k \in \N$, with $k > N/2 + 1$. Consider a solution $\Psi_\varepsilon \in \boC^0([0, T], H^{k + 4}(\R^N))$ to~\eqref{NLS-eps} such that
\begin{equation}
\label{eq:cond-Psi-eps-T}
\sigma_T := \varepsilon^\frac{1}{2} \max_{t \in [0, T]} \big\| \Psi_\varepsilon(\cdot, t) \big\|_{L^\infty} < 1.
\end{equation}
There exists a positive number $C$, depending possibly on $\sigma_T$ and $k$, but not on $\varepsilon$, such that
\begin{equation}
 \label{eq:cont-gEl-NLS}
\begin{split}
\frac{1}{2} \Big( \big\| \Psi_\varepsilon(\cdot, t) \big\|_{\dot{H}^{\ell - 2}}^2 & + \varepsilon \big\| \nabla \Psi_\varepsilon(\cdot, t) \big\|_{\dot{H}^{\ell - 2}}^2 + \varepsilon^2 \big\| \Delta \Psi_\varepsilon(\cdot, t) \big\|_{\dot{H}^{\ell - 2}}^2 \Big)\\
& \leq \gE_\varepsilon^\ell(t) \leq C \Big( \big\| \Psi_\varepsilon(\cdot, t) \big\|_{\dot{H}^{\ell - 2}}^2 + \varepsilon \big\| \nabla \Psi_\varepsilon(\cdot, t) \big\|_{\dot{H}^{\ell - 2}}^2 + \varepsilon^2 \big\| \Delta \Psi_\varepsilon(\cdot, t) \big\|_{\dot{H}^{\ell - 2}}^2 \Big),
\end{split}
\end{equation}
for any $2 \leq \ell \leq k + 2$ and any $t \in [0, T]$. Moreover, we also have
\begin{equation}
 \label{eq:cont-gE1-NLS}
\begin{split}
& \frac{1}{2} \Big( \big\| \Psi_\varepsilon(\cdot, t) \big\|_{L^2}^2 + \varepsilon \big\| \nabla \Psi_\varepsilon(\cdot, t) \big\|_{L^2}^2 \Big) \leq \gE_\varepsilon^1(t) \leq C \Big( \big\| \Psi_\varepsilon(\cdot, t) \big\|_{L^2}^2 + \varepsilon \big\| \nabla \Psi_\varepsilon(\cdot, t) \big\|_{L^2}^2 \Big),
\end{split}
\end{equation}
for any $t \in [0, T]$.
\end{lem}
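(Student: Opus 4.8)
The plan is to fix $t\in[0,T]$ and reduce everything to pointwise-in-time Sobolev inequalities, writing $g_\varepsilon:=(1-\varepsilon|\Psi_\varepsilon|^2)^\frac{1}{2}$ for the quantity appearing in \eqref{def:gE-NLS} (so that $g_\varepsilon=m_2$ under the correspondence of Corollary~\ref{cor:Cauchy-Psi-eps}). The lower bounds in \eqref{eq:cont-gEl-NLS} and \eqref{eq:cont-gE1-NLS} are immediate: in \eqref{def:gE-NLS} the three terms $\|\varepsilon\partial_t\Psi_\varepsilon-i\Psi_\varepsilon\|_{\dot{H}^{\ell-2}}^2$, $\varepsilon\|\partial_t g_\varepsilon\|_{\dot{H}^{\ell-2}}^2$ and $\varepsilon\|\Delta g_\varepsilon\|_{\dot{H}^{\ell-2}}^2$ are nonnegative, so discarding them leaves exactly $\frac{1}{2}\|\Psi_\varepsilon\|_{\dot{H}^{\ell-2}}^2+\varepsilon\|\nabla\Psi_\varepsilon\|_{\dot{H}^{\ell-2}}^2+\frac{1}{2}\varepsilon^2\|\Delta\Psi_\varepsilon\|_{\dot{H}^{\ell-2}}^2$, which already dominates the left-hand side of \eqref{eq:cont-gEl-NLS} since $\varepsilon\|\nabla\Psi_\varepsilon\|_{\dot{H}^{\ell-2}}^2\geq\frac{1}{2}\varepsilon\|\nabla\Psi_\varepsilon\|_{\dot{H}^{\ell-2}}^2$.

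The work is in the upper bound, where I would control the three ``extra'' terms one by one after rewriting them through the equations. Using \eqref{NLS-eps} one gets the identity $\varepsilon\partial_t\Psi_\varepsilon-i\Psi_\varepsilon=i\big(\varepsilon g_\varepsilon\Delta\Psi_\varepsilon-\varepsilon\Psi_\varepsilon\Delta g_\varepsilon-g_\varepsilon\Psi_\varepsilon\big)$; using the second line of \eqref{eq:sys-LL} together with $\langle\Psi_\varepsilon,i\Psi_\varepsilon\rangle_\C=0$ one gets $\partial_t g_\varepsilon=\varepsilon\langle i\Psi_\varepsilon,\Delta\Psi_\varepsilon\rangle_\C$; and a direct computation gives $\Delta g_\varepsilon=-\frac{\varepsilon}{g_\varepsilon}\big(|\nabla\Psi_\varepsilon|^2+\langle\Psi_\varepsilon,\Delta\Psi_\varepsilon\rangle_\C\big)-\frac{\varepsilon^2}{g_\varepsilon^3}\langle\Psi_\varepsilon,\nabla\Psi_\varepsilon\rangle_\C^2$. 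Condition \eqref{eq:cond-Psi-eps-T} forces $\varepsilon|\Psi_\varepsilon|^2\leq\sigma_T^2<1$ pointwise, so $g_\varepsilon$ and $1/g_\varepsilon$ are smooth functions of $\varepsilon|\Psi_\varepsilon|^2$ on $[0,\sigma_T^2]$ with all derivatives bounded in terms of $\sigma_T$. I would then estimate each $\dot{H}^{\ell-2}$ norm by the tame (Moser) product inequality together with composition estimates for $g_\varepsilon$ and $1/g_\varepsilon$, always measuring the highest-order factor in one of $\dot{H}^{\ell-2}$, $\dot{H}^{\ell-1}$ or $\dot{H}^{\ell}=\|\Delta\,\cdot\,\|_{\dot{H}^{\ell-2}}$ and the remaining low-order factors in $L^\infty$. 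The mechanism is that every surviving copy of $\|\Psi_\varepsilon\|_{L^\infty}$ comes with a compensating power of $\varepsilon$, and $\varepsilon\|\Psi_\varepsilon\|_{L^\infty}^2\leq\sigma_T^2$ then turns it into a harmless $\sigma_T$-constant: thus $g_\varepsilon\Psi_\varepsilon$ contributes $O(\|\Psi_\varepsilon\|_{\dot{H}^{\ell-2}}^2)$, $\varepsilon g_\varepsilon\Delta\Psi_\varepsilon$ contributes $O(\varepsilon^2\|\Delta\Psi_\varepsilon\|_{\dot{H}^{\ell-2}}^2)$, and $\partial_t g_\varepsilon$ gives $\varepsilon^3\|\Psi_\varepsilon\Delta\Psi_\varepsilon\|_{\dot{H}^{\ell-2}}^2\lesssim\sigma_T^2\,\varepsilon^2\|\Delta\Psi_\varepsilon\|_{\dot{H}^{\ell-2}}^2$.

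The hard part is the terms coming from $\Delta g_\varepsilon$, namely $\varepsilon\|\Delta g_\varepsilon\|_{\dot{H}^{\ell-2}}^2$ together with the piece $\varepsilon\Psi_\varepsilon\Delta g_\varepsilon$ of the first term: through $|\nabla\Psi_\varepsilon|^2$ and $\langle\Psi_\varepsilon,\nabla\Psi_\varepsilon\rangle_\C^2$ these look quartic in $\Psi_\varepsilon$, and a careless product split produces the genuinely uncontrolled quantity $\varepsilon^3\|\nabla\Psi_\varepsilon\|_{L^4}^4$ (it is not dominated by the target with a constant if one bounds $|\nabla\Psi_\varepsilon|^2$ by $\|\nabla\Psi_\varepsilon\|_{L^\infty}\|\nabla\Psi_\varepsilon\|$, since $\varepsilon\|\nabla\Psi_\varepsilon\|_{L^\infty}^2$ is not controlled by $\sigma_T$). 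The decisive point I would use instead is the Gagliardo--Nirenberg inequality $\|\nabla\Psi_\varepsilon\|_{L^4}^2\lesssim\|\Psi_\varepsilon\|_{L^\infty}\|\Delta\Psi_\varepsilon\|_{L^2}$ (valid in every dimension) and its $\dot{H}^{\ell-2}$ analogues, which free exactly one factor $\|\Psi_\varepsilon\|_{L^\infty}$ from the top-order factor rather than from $\nabla\Psi_\varepsilon$; this yields $\varepsilon^3\|\nabla\Psi_\varepsilon\|_{L^4}^4\lesssim\varepsilon^3\|\Psi_\varepsilon\|_{L^\infty}^2\|\Delta\Psi_\varepsilon\|_{L^2}^2\leq\sigma_T^2\,\varepsilon^2\|\Delta\Psi_\varepsilon\|_{L^2}^2$, and the cubic factor $\langle\Psi_\varepsilon,\nabla\Psi_\varepsilon\rangle_\C^2/g_\varepsilon^3$ is treated identically with one further harmless $\sigma_T^2$. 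Iterating this bookkeeping over the finitely many Leibniz terms closes the upper bound in \eqref{eq:cont-gEl-NLS}; for $2\leq\ell\leq k+2$ all the derivatives that occur stay within the regularity $H^{k+4}$ assumed on $\Psi_\varepsilon$, so that no norm outside the three target norms ever appears and the constant depends only on $\sigma_T$ and $k$.

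Finally, \eqref{eq:cont-gE1-NLS} is the easy endpoint. By statement $(v)$ of Corollary~\ref{cor:Cauchy-Psi-eps} the density of $\gE_\varepsilon^1=\gE_\varepsilon(\Psi_\varepsilon)$ equals $\frac{1}{2}\big(|\Psi_\varepsilon|^2+\varepsilon|\nabla\Psi_\varepsilon|^2+\varepsilon^2\langle\Psi_\varepsilon,\nabla\Psi_\varepsilon\rangle_\C^2/(1-\varepsilon|\Psi_\varepsilon|^2)\big)$; the last summand is nonnegative, which gives the lower bound, and is pointwise bounded by $\frac{\sigma_T^2}{1-\sigma_T^2}\,\varepsilon|\nabla\Psi_\varepsilon|^2$ because $\varepsilon|\Psi_\varepsilon|^2\leq\sigma_T^2$, which after integration gives the upper bound with constant $\frac{1}{1-\sigma_T^2}$.
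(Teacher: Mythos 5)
Your proposal is correct and follows the same overall scheme as the paper: drop the nonnegative terms for the lower bounds, and for the upper bound rewrite the three ``extra'' terms of \eqref{def:gE-NLS} through the equation (your identities $\varepsilon\partial_t\Psi_\varepsilon-i\Psi_\varepsilon=i(\varepsilon g_\varepsilon\Delta\Psi_\varepsilon-\varepsilon\Psi_\varepsilon\Delta g_\varepsilon-g_\varepsilon\Psi_\varepsilon)$ and $\partial_t g_\varepsilon=\varepsilon\langle i\Psi_\varepsilon,\Delta\Psi_\varepsilon\rangle_\C$ are exactly what the paper uses, written there in divergence form), and then exploit that every surviving $\|\Psi_\varepsilon\|_{L^\infty}$ carries a compensating $\varepsilon^{1/2}$. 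The one place where you genuinely diverge is the term $\varepsilon\|\Delta(1-\varepsilon|\Psi_\varepsilon|^2)^{1/2}\|_{\dot{H}^{\ell-2}}^2$: you expand $\Delta g_\varepsilon$ explicitly into $|\nabla\Psi_\varepsilon|^2$, $\langle\Psi_\varepsilon,\Delta\Psi_\varepsilon\rangle_\C$ and $\langle\Psi_\varepsilon,\nabla\Psi_\varepsilon\rangle_\C^2/g_\varepsilon^3$ and then must fight the quartic term with Gagliardo--Nirenberg, whereas the paper writes $g_\varepsilon=\eta(\varepsilon^{1/2}\Psi_\varepsilon)$ for a suitably truncated smooth $\eta$ and applies the Moser composition estimate \eqref{eq:Moser-comp} directly in $\dot{H}^\ell$, which yields $\|\Delta g_\varepsilon\|_{\dot{H}^{\ell-2}}\leq C\varepsilon^{1/2}\|\Delta\Psi_\varepsilon\|_{\dot{H}^{\ell-2}}$ in one line and never produces the dangerous $\varepsilon^3\|\nabla\Psi_\varepsilon\|_{L^4}^4$. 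Your fix is valid --- the inequality $\|\nabla f\|_{L^4}^2\lesssim\|f\|_{L^\infty}\|f\|_{\dot{H}^2}$ and its higher-order analogues are precisely the case of \eqref{eq:Moser} in which the $L^\infty$ norms fall on the undifferentiated function --- and your diagnosis of why the naive split $\|\nabla\Psi_\varepsilon\|_{L^\infty}\|\nabla\Psi_\varepsilon\|_{L^2}$ fails is exactly right; but note that the same caution is needed in the products $g_\varepsilon\Delta\Psi_\varepsilon$ and $g_\varepsilon^{-1}\langle\Psi_\varepsilon,\Delta\Psi_\varepsilon\rangle_\C$, where a two-term tame estimate would leave an uncontrolled $\|\Delta\Psi_\varepsilon\|_{L^\infty}$ or $\|\nabla\Psi_\varepsilon\|_{L^\infty}$ against a high Sobolev norm of the composition: one must systematically apply \eqref{eq:Moser} to the full product of derivatives of $\Psi_\varepsilon$ obtained after Fa\`a di Bruno on the compositions, as the paper does implicitly. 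Your treatment of $\gE_\varepsilon^1$ via the pointwise bound $\varepsilon^2\langle\Psi_\varepsilon,\nabla\Psi_\varepsilon\rangle_\C^2/(1-\varepsilon|\Psi_\varepsilon|^2)\leq\sigma_T^2(1-\sigma_T^2)^{-1}\varepsilon|\nabla\Psi_\varepsilon|^2$ coincides with the paper's.
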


With Corollary~\ref{cor:NLS-energy-estimate} and Lemma~\ref{lem:cont-energies-NLS} at hand, we are now in position to provide the following control on the solutions $\Psi_\varepsilon$ to~\eqref{NLS-eps}.

\begin{prop}
\label{prop:estim-eps}
Let $0 < \varepsilon < 1$, $0 < \sigma < 1$ and $k \in \N$, with $k > N/2 + 1$. There exists a positive number $C_*$, depending possibly on $\sigma$ and $k$, but not on $\varepsilon$, such that if an initial datum $\Psi_\varepsilon^0 \in H^{k + 3}(\R^N)$ satisfies 
\begin{equation}
\label{cond:Psi-eps-petit-0}
C_* \varepsilon^\frac{1}{2} \Big( \big\| \Psi_\varepsilon^0 \big\|_{H^k} + \varepsilon^\frac{1}{2} \big\| \nabla \Psi_\varepsilon^0 \big\|_{\dot{H}^k} + \varepsilon \big\| \Delta \Psi_\varepsilon^0 \big\|_{\dot{H}^k} \Big) \leq 1, 
\end{equation}
then there exists a positive time
$$T_\varepsilon \geq \frac{1}{C_* \big( \big\| \Psi_\varepsilon^0 \big\|_{H^k} + \varepsilon^\frac{1}{2} \big\| \nabla \Psi_\varepsilon^0 \big\|_{\dot{H}^k} + \varepsilon \big\| \Delta \Psi_\varepsilon^0 \big\|_{\dot{H}^k} \big)^2},$$
such that the unique solution $\Psi_\varepsilon$ to~\eqref{NLS-eps} with initial condition $\Psi_\varepsilon^0$ satisfies the uniform bound
$$\varepsilon^\frac{1}{2} \big\| \Psi_\varepsilon(\cdot, t) \big\|_{L^\infty} \leq \sigma,$$
as well as the energy estimate
\begin{equation}
\label{eq:borne-Psi-eps}
\begin{split}
\big\| \Psi_\varepsilon(\cdot, t) \big\|_{H^k} + \varepsilon^\frac{1}{2} \big\| \nabla \Psi_\varepsilon(\cdot, t) \big\|_{\dot{H}^k} & + \varepsilon \big\| \Delta \Psi_\varepsilon(\cdot, t) \big\|_{\dot{H}^k}\\
& \leq C_* \Big( \big\| \Psi_\varepsilon^0 \big\|_{H^k} + \varepsilon^\frac{1}{2} \big\| \nabla \Psi_\varepsilon^0 \big\|_{\dot{H}^k} + \varepsilon \big\| \Delta \Psi_\varepsilon^0 \big\|_{\dot{H}^k} \Big),
\end{split}
\end{equation}
for any $0 \leq t \leq T_\varepsilon$.
\end{prop}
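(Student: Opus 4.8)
The plan is to run a continuity (bootstrap) argument on an aggregate high-order energy, using Corollary~\ref{cor:NLS-energy-estimate} and Lemma~\ref{lem:cont-energies-NLS} as the two main inputs. First I would introduce the total energy $\boS_\varepsilon(t) := \sum_{\ell=1}^{k+2} \gE_\varepsilon^\ell(t)$ and abbreviate by $\boN_\varepsilon(t) := \| \Psi_\varepsilon(\cdot,t) \|_{H^k} + \varepsilon^{1/2} \| \nabla \Psi_\varepsilon(\cdot,t) \|_{\dot{H}^k} + \varepsilon \| \Delta \Psi_\varepsilon(\cdot,t) \|_{\dot{H}^k}$ the quantity appearing in~\eqref{eq:borne-Psi-eps}. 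Summing the two-sided bounds of Lemma~\ref{lem:cont-energies-NLS} over $\ell$, and using $0 < \varepsilon < 1$ to absorb the weighted lower-order contributions $\varepsilon \| \Psi_\varepsilon \|_{\dot{H}^j}^2$ and $\varepsilon^2 \| \Psi_\varepsilon \|_{\dot{H}^j}^2$ with $j \leq k$ into $\| \Psi_\varepsilon \|_{H^k}^2$, I would establish that $\boS_\varepsilon(t)$ is equivalent to $\boN_\varepsilon(t)^2$, with constants depending only on $\sigma$ and $k$, as long as the smallness $\varepsilon^{1/2} \| \Psi_\varepsilon(\cdot,t) \|_{L^\infty} \leq \sigma$ holds.

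Next I would convert~\eqref{eq:energy-estimate-NLS} into a closed differential inequality for $\boS_\varepsilon$. The multiplicative factor there is controlled as follows: the Sobolev embedding $H^{k-1}(\R^N) \hookrightarrow L^\infty(\R^N)$, valid since $k - 1 > N/2$, gives $\| \Psi_\varepsilon \|_{L^\infty}^2 + \| \nabla \Psi_\varepsilon \|_{L^\infty}^2 \leq C \| \Psi_\varepsilon \|_{H^k}^2$; and on the region $\varepsilon^{1/2} \| \Psi_\varepsilon \|_{L^\infty} \leq \sigma$ one has $(1 - \varepsilon |\Psi_\varepsilon|^2)^{1/2} \geq (1 - \sigma^2)^{1/2}$, so that the last term is bounded by $\frac{1}{1 - \sigma^2} (\varepsilon^{1/2} \| \Psi_\varepsilon \|_{L^\infty})^2 \| \nabla \Psi_\varepsilon \|_{L^\infty}^2 \leq C \| \Psi_\varepsilon \|_{H^k}^2$. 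Hence the whole factor is $\leq C(\sigma, k) \boN_\varepsilon(t)^2 \leq C' \boS_\varepsilon(t)$. Recalling that $\gE_\varepsilon^1 = \gE_\varepsilon$ is conserved (statement $(v)$ of Corollary~\ref{cor:Cauchy-Psi-eps}), I would sum~\eqref{eq:energy-estimate-NLS} over $2 \leq \ell \leq k+2$; since $\sum_\ell (\gE_\varepsilon^\ell + \varepsilon^{\delta_{\ell,2}} \gE_\varepsilon^{\ell-1}) \leq 2 \boS_\varepsilon$, this yields $\boS_\varepsilon'(t) \leq C \boS_\varepsilon(t)^2$ on the set where the $L^\infty$ smallness holds.

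The continuity argument is then standard. Integrating $\boS_\varepsilon' \leq C \boS_\varepsilon^2$ gives $\boS_\varepsilon(t) \leq 2 \boS_\varepsilon(0)$ for $t \leq T_\varepsilon := 1/(2 C \boS_\varepsilon(0))$, and since $\boS_\varepsilon(0) \approx \boN_\varepsilon(0)^2$ this produces the lower bound on $T_\varepsilon$ claimed in the statement. To close the bootstrap I would let $T^\ast$ be the supremum of the times on which $\varepsilon^{1/2} \| \Psi_\varepsilon(\cdot,t) \|_{L^\infty} \leq \sigma$; on $[0, \min(T^\ast, T_\varepsilon)]$ the previous bounds apply, and via Lemma~\ref{lem:cont-energies-NLS} and Sobolev embedding one gets $\varepsilon^{1/2} \| \Psi_\varepsilon(\cdot,t) \|_{L^\infty} \leq C'' \varepsilon^{1/2} \boN_\varepsilon(0)$. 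Choosing $C_*$ large enough, so that~\eqref{cond:Psi-eps-petit-0} forces $C'' \varepsilon^{1/2} \boN_\varepsilon(0) \leq \sigma/2$, strictly improves the $L^\infty$ bound; then the time-continuity of $t \mapsto \| \Psi_\varepsilon(\cdot,t) \|_{L^\infty}$, which follows from the continuity of the flow into $H^s$ with $N/2 < s < k$ (Remark~\ref{rem:inter}) composed with $H^s \hookrightarrow L^\infty$, forces $T^\ast \geq T_\varepsilon$. The blow-up alternative~\eqref{eq:cond-Tmax-NLS-eps} of Corollary~\ref{cor:Cauchy-Psi-eps} guarantees that the maximal existence time exceeds $T_\varepsilon$, since on $[0, T_\varepsilon]$ both $\varepsilon^{1/2} \| \Psi_\varepsilon \|_{L^\infty} \leq \sigma < 1$ and $\int_0^{T_\varepsilon} \| \nabla \Psi_\varepsilon \|_{L^\infty}^2 < \infty$. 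Finally $\boS_\varepsilon(t) \leq 2 \boS_\varepsilon(0)$ translates, through the equivalence with $\boN_\varepsilon$, into~\eqref{eq:borne-Psi-eps}.

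The main difficulty is not the bootstrap itself but reconciling the regularity: Corollary~\ref{cor:NLS-energy-estimate}, which supplies~\eqref{eq:energy-estimate-NLS}, requires $\Psi_\varepsilon^0 \in H^{k+5}(\R^N)$, whereas here only $\Psi_\varepsilon^0 \in H^{k+3}(\R^N)$ is assumed. I would therefore first carry out the whole argument for data in $H^{k+5}(\R^N)$, obtaining bounds depending only on $\boN_\varepsilon(0)$, and then approximate a general datum $\Psi_\varepsilon^0 \in H^{k+3}(\R^N)$ by smooth data $\Psi_{\varepsilon,n}^0 \to \Psi_\varepsilon^0$ in $H^{k+3}(\R^N)$, passing to the limit with the continuous dependence of the flow stated in Corollary~\ref{cor:Cauchy-Psi-eps}$(iii)$. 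Besides this approximation step, the delicate points are the bookkeeping of the $\varepsilon$-weights when summing the energies, so that no negative power of $\varepsilon$ survives and every constant stays $\varepsilon$-independent, and the calibration of the single constant $C_*$ so that it simultaneously guarantees the strict improvement of the $L^\infty$ bound, the lower bound on $T_\varepsilon$, and the final estimate~\eqref{eq:borne-Psi-eps}.
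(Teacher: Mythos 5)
Your proposal is correct and follows essentially the same route as the paper: the aggregate energy $\sum_{\ell=1}^{k+2}\gE_\varepsilon^\ell$, its equivalence with the squared weighted norm via Lemma~\ref{lem:cont-energies-NLS}, the Riccati-type inequality $\boS_\varepsilon'\leq C\boS_\varepsilon^2$ closed by a bootstrap on the $L^\infty$ smallness, the blow-up alternative of Corollary~\ref{cor:Cauchy-Psi-eps} to rule out $T_{\max}<T_\varepsilon$, and the final density argument from $H^{k+5}$ down to $H^{k+3}$. No gaps.
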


An important feature of Proposition~\ref{prop:estim-eps} lies in the fact that the solutions $\Psi_\varepsilon$ are controlled uniformly with respect to the small parameter $\varepsilon$ up to a loss of three derivatives. This loss is usual in the context of asymptotic regimes for Schr\"odinger-like equations (see e.g~\cite{BeGrSaS2, BeGrSaS3} and the references therein). It is related to the property that the energies $E_{\rm LL}^k$ naturally scale according to the right-hand side of~\eqref{eq:borne-Psi-eps} in the limit $\varepsilon \to 0$. This property is the origin of a loss of two derivatives. The extra loss is due to the requirement to use the continuity of the~\eqref{NLS-eps} flow with respect to the initial datum in order to prove Proposition~\ref{prop:estim-eps}, and this continuity holds with a loss of one derivative~\footnote{In view of Remark~\ref{rem:inter}, continuity actually holds with any positive loss of fractional derivatives, which translates by a loss of at least one classical derivative.} in view of statement $(iii)$ in Corollary~\ref{cor:Cauchy-Psi-eps}.

We now turn to our ultimate goal, which is to estimate the error between a solution $\Psi_\varepsilon$ to~\eqref{NLS-eps} and a solution $\Psi$ to~\eqref{CS}. Going back to~\eqref{eq:consistency}, we check that their difference $u_\varepsilon := \Psi_\varepsilon - \Psi$ satisfies the equation 
\begin{equation}
\label{eq:diff}
 i \partial_t u_\varepsilon + \Delta u_\varepsilon + \frac{1}{2} \big( |u_\varepsilon + \Psi|^2 (u_\varepsilon + \Psi) - |\Psi|^2 \Psi \big) = \varepsilon \boR_\varepsilon.
\end{equation}
In view of~\eqref{def:R-eps}, we can invoke Proposition~\ref{prop:estim-eps} in order to bound the remainder term $\boR_\varepsilon$ in suitable Sobolev norms. On the other hand, we also have to provide a Sobolev control of the solution $\Psi$ to~\eqref{CS} on a time interval as long as possible. In this direction, we can show the following classical result (see e.g.~\cite{Cazenav0}), by performing standard energy estimates on the $H^k$-norms of the solution $\Psi$.

\begin{lem}
\label{lem:estim-CS}
Let $k \in \N$, with $k > N/2$, and $\Psi^0 \in H^k(\R^N)$. There exists a positive number $C_k$, depending possibly on $k$, such that there exists a positive time
$$T_* \geq \frac{1}{C_k \| \Psi^0 \|_{H^k}^2},$$
for which the unique solution $\Psi$ to~\eqref{CS} with initial condition $\Psi^0$ satisfies the energy estimate
$$\big\| \Psi(\cdot, t) \big\|_{H^k} \leq C_k \big\| \Psi^0 \big\|_{H^k},$$
for any $0 \leq t \leq T_*$.
\end{lem}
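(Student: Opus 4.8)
The plan is to run a standard high-regularity energy estimate on the $H^k$-norm of $\Psi$ and to close it through a Riccati-type differential inequality. Since the statement only assumes $\Psi^0 \in H^k(\R^N)$ with $k > N/2$, I would first establish the a priori bound for smooth solutions, say with data in $H^\ell(\R^N)$ for some $\ell > k$, for which the Cauchy theorem quoted from \cite{Cazenav0} guarantees a solution in $\boC^0([0,T],H^\ell(\R^N))$ and hence enough regularity to differentiate the relevant norms in time. The general case then follows by approximating $\Psi^0$ in $H^k(\R^N)$ by smooth data and passing to the limit via the Lipschitz continuity of the flow (statement $(ii)$ of the quoted theorem), the constant $C_k$ being independent of $\ell$.

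For the core estimate I would apply the Bessel potential $\Lambda^k := (1-\Delta)^{k/2}$ to~\eqref{CS}, pair the resulting identity in $L^2(\R^N)$ with $\overline{\Lambda^k \Psi}$, and take the imaginary part. The dispersive contribution $\int_{\R^N} \Delta \Lambda^k \Psi \, \overline{\Lambda^k \Psi} = - \| \nabla \Lambda^k \Psi \|_{L^2}^2$ is real, so it drops out after taking the imaginary part, and one is left with
$$\frac{d}{dt} \big\| \Psi(\cdot,t) \big\|_{H^k}^2 = - \Im \int_{\R^N} \Lambda^k \big( |\Psi|^2 \Psi \big) \, \overline{\Lambda^k \Psi}.$$
By the Cauchy--Schwarz inequality this is bounded by $\big\| |\Psi|^2 \Psi \big\|_{H^k} \big\| \Psi \big\|_{H^k}$. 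The key analytic input is the Moser (tame) estimate for the cubic nonlinearity: since $k > N/2$, the space $H^k(\R^N)$ is an algebra, whence $\big\| |\Psi|^2 \Psi \big\|_{H^k} \le C_k \| \Psi \|_{H^k}^3$. Setting $y(t) := \| \Psi(\cdot,t) \|_{H^k}^2$, this yields the differential inequality $y'(t) \le C_k \, y(t)^2$.

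The last step is an ODE comparison. Integrating $y' \le C_k y^2$ with $y(0) = \| \Psi^0 \|_{H^k}^2$ gives $y(t) \le y(0) / (1 - C_k y(0) t)$ as long as the denominator stays positive; on the interval $[0, T_*]$ with $T_* := 1/(2 C_k \| \Psi^0 \|_{H^k}^2)$ one obtains $y(t) \le 2 y(0)$, that is $\| \Psi(\cdot,t) \|_{H^k} \le \sqrt{2}\, \| \Psi^0 \|_{H^k}$. Relabelling $C_k$ produces both the lower bound on $T_*$ and the stated energy estimate. A continuity (bootstrap) argument combined with the blow-up alternative $(i)$ of the Cauchy theorem ensures that the solution indeed exists on all of $[0,T_*]$, the a priori bound preventing the $H^k$-norm from blowing up before $T_*$.

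I expect no serious obstacle here: the argument is entirely classical, and the only genuine analytic ingredient is the algebra/Moser estimate, which holds precisely because $k > N/2$. The one point requiring care is the justification of differentiating $\| \Psi(\cdot,t) \|_{H^k}^2$ in time for data merely in $H^k(\R^N)$, where $\partial_t \Psi$ a priori lives only in $H^{k-2}(\R^N)$; this is exactly why I would carry out the computation at higher regularity and descend by density, which is the standard remedy.
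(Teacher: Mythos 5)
Your argument is correct and is essentially the proof given in the paper: the paper likewise differentiates $\frac{1}{2}\|\Psi(\cdot,t)\|_{H^k}^2$, kills the Laplacian term by integration by parts, applies the Moser estimates together with the Sobolev embedding to obtain the Riccati inequality $\gS_k' \leq C_k \gS_k^2$, integrates up to $T_* = 1/(2C_k\gS_k(0))$, and closes with a continuation argument based on the blow-up alternative. The paper also performs the computation at higher regularity (taking $\Psi^0 \in H^{k+2}(\R^N)$) and descends to general $H^k$ data by density, exactly as you propose.
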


Finally, we can perform standard energy estimates in order to control the difference $u_\varepsilon$ according to the following statement.

\begin{prop}
\label{prop:error}
Let $0 < \varepsilon < 1$, $0 < \sigma < 1$ and $k \in \N$, with $k > N/2 + 2$. Given an initial condition $\Psi_\varepsilon^0 \in H^{k + 3}(\R^N)$, assume that the unique corresponding solution $ \Psi_\varepsilon$ to~\eqref{NLS-eps} is well-defined on a time interval $[0, T]$ for some positive number $T$, and that it satisfies the uniform bound
\begin{equation}
\label{eq:unif-bound}
\varepsilon^\frac{1}{2} \, \big\| \Psi_\varepsilon(\cdot, t) \big\|_{L^\infty} \leq \sigma,
\end{equation}
for any $t \in [0, T]$. Assume similarly that the solution $\Psi$ to~\eqref{CS} with initial datum $\Psi^0\in H^k(\R^N)$ is well-defined on $[0, T]$. Set $u_\varepsilon := \Psi_\varepsilon - \Psi$ and
$$\boK_\varepsilon(T) := \big\| \Psi \big\|_{\boC^0([0, T], H^k)} + \big\| \Psi_\varepsilon \big\|_{\boC^0([0, T], H^k)} + \varepsilon^\frac{1}{2} \big\| \nabla \Psi_\varepsilon \big\|_{\boC^0([0, T], \dot{H}^k)} + \varepsilon \big\| \Delta \Psi_\varepsilon \big\|_{\boC^0([0, T], \dot{H}^k)}.$$
Then there exists a positive number $C_*$, depending possibly on $\sigma$ and $k$, but not on $\varepsilon$, such that
\begin{equation}
 \label{est:error}
 \big\| u_\varepsilon(\cdot, t) \big\|_{H^{k - 2}} \leq \Big( \big\| u_\varepsilon(\cdot, 0) \big\|_{H^{k - 2}} + \varepsilon \boK_\varepsilon(T) \big( 1 + \boK_\varepsilon(T)^3 \big) \Big) \, e^{C_* \boK_\varepsilon(T)^2 t}, 
\end{equation}
for any $t\in [0, T]$.
\end{prop}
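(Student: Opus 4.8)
The plan is to run a direct $H^{k-2}$ energy estimate on the difference equation~\eqref{eq:diff} and to close it with a Grönwall argument. Writing $\langle D \rangle = (1 - \Delta)^{1/2}$, I would apply $\langle D \rangle^{k - 2}$ to~\eqref{eq:diff}, pair the result with $\langle D \rangle^{k - 2} u_\varepsilon$ in $L^2$, and take the imaginary part. Since $k > N/2 + 2$, the space $H^{k - 2}(\R^N)$ is an algebra that embeds continuously into $L^\infty(\R^N)$, which is precisely the functional setting required to run such an estimate at this level of regularity. In this pairing the term $i \partial_t u_\varepsilon$ produces $\frac{1}{2} \frac{d}{dt} \| u_\varepsilon \|_{H^{k - 2}}^2$, while the dispersive term $\Delta u_\varepsilon$ is self-adjoint and contributes a purely real quantity that drops out under the imaginary part. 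Hence the left-hand side reduces to $\frac{1}{2} \frac{d}{dt} \| u_\varepsilon \|_{H^{k - 2}}^2$, and the whole matter is transferred to controlling the nonlinear difference and the forcing $\varepsilon \boR_\varepsilon$.

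For the nonlinear term I would expand
$$\frac{1}{2} \big( |u_\varepsilon + \Psi|^2 (u_\varepsilon + \Psi) - |\Psi|^2 \Psi \big) = \frac{1}{2} \big( 2 |\Psi|^2 u_\varepsilon + \Psi^2 \overline{u_\varepsilon} + 2 \Psi |u_\varepsilon|^2 + \overline{\Psi}\, u_\varepsilon^2 + |u_\varepsilon|^2 u_\varepsilon \big),$$
and observe that every monomial carries at least one factor $u_\varepsilon$ or $\overline{u_\varepsilon}$. Using the algebra property and the embedding $H^{k - 2} \hookrightarrow L^\infty$, the $H^{k - 2}$-norm of this difference is bounded by $C \big( \| \Psi \|_{H^{k - 2}}^2 + \| u_\varepsilon \|_{H^{k - 2}}^2 \big) \| u_\varepsilon \|_{H^{k - 2}}$. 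Since $\| u_\varepsilon \|_{H^{k - 2}} \leq \| \Psi_\varepsilon \|_{H^{k - 2}} + \| \Psi \|_{H^{k - 2}} \leq \boK_\varepsilon(T)$ on $[0, T]$, the Cauchy--Schwarz pairing with $u_\varepsilon$ contributes at most $C_* \boK_\varepsilon(T)^2 \| u_\varepsilon \|_{H^{k - 2}}^2$. This is exactly the quantity producing the exponential rate $e^{C_* \boK_\varepsilon(T)^2 t}$ in~\eqref{est:error}.

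For the forcing I would estimate $\| \boR_\varepsilon \|_{H^{k - 2}}$ from the explicit expression~\eqref{def:R-eps}. The uniform bound~\eqref{eq:unif-bound} guarantees $(1 - \varepsilon |\Psi_\varepsilon|^2)^{1/2} \geq (1 - \sigma^2)^{1/2} > 0$, so the denominators are smooth bounded functions of $\Psi_\varepsilon$, and tame product together with composition (Moser) estimates apply. Each of the three contributions in~\eqref{def:R-eps} involves at most two derivatives more than $\Psi_\varepsilon$, so that $\| \boR_\varepsilon \|_{H^{k - 2}}$ is controlled by $\| \Psi_\varepsilon \|_{H^k}$, hence by a polynomial in $\boK_\varepsilon(T)$; carrying the bookkeeping through gives a bound consistent with the factor $\boK_\varepsilon(T) \big( 1 + \boK_\varepsilon(T)^3 \big)$. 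Pairing with $u_\varepsilon$ then produces a forcing contribution of size $\varepsilon \| \boR_\varepsilon \|_{H^{k - 2}} \| u_\varepsilon \|_{H^{k - 2}}$.

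Collecting the two bounds and dividing by $\| u_\varepsilon \|_{H^{k - 2}}$ (handling the possible vanishing by the usual $\sqrt{\| u_\varepsilon \|^2 + \delta^2}$ regularization) leads to a differential inequality of the form $\frac{d}{dt} \| u_\varepsilon \|_{H^{k - 2}} \leq C_* \boK_\varepsilon(T)^2 \| u_\varepsilon \|_{H^{k - 2}} + \varepsilon\, C_* \boK_\varepsilon(T) \big( 1 + \boK_\varepsilon(T)^3 \big)$, whose integration via Grönwall's lemma yields~\eqref{est:error}. I expect the main obstacle to be the sharp bookkeeping of the tame estimates for $\boR_\varepsilon$ and for the cubic difference, organized so as to extract exactly one power of $\varepsilon$ together with the stated powers of $\boK_\varepsilon(T)$; this is where the threshold $k > N/2 + 2$ (for the algebra and $L^\infty$ structure of $H^{k - 2}$) and the uniform bound~\eqref{eq:unif-bound} (to keep the denominators nondegenerate) are indispensable. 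A secondary technical point is to justify the energy identity rigorously despite the limited time regularity of the solutions, which I would handle by a standard density/regularization argument relying on the well-posedness results of Corollary~\ref{cor:Cauchy-Psi-eps} and Lemma~\ref{lem:estim-CS}.
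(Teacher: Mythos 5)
Your proposal is correct and follows essentially the same route as the paper: an $H^{k-2}$ energy estimate on the difference equation, in which the Laplacian contribution cancels, the cubic difference is bounded via the algebra property of $H^{k-2}$ and the Sobolev embedding, the remainder $\boR_\varepsilon$ is handled by Moser-type product and composition estimates (with smooth truncations of the denominators made legitimate by~\eqref{eq:unif-bound}), and Gr\"onwall closes the argument. The only point worth noting is that the direct bound is $\| \boR_\varepsilon \|_{H^{k-2}} \lesssim \boK_\varepsilon(T)^3 ( 1 + \boK_\varepsilon(T)^3 )$, the extra factor $\boK_\varepsilon(T)^2$ being absorbed when the forcing term is integrated against the rate $\boK_\varepsilon(T)^2$ in the Gr\"onwall step, which is exactly how the stated prefactor $\varepsilon \boK_\varepsilon(T) ( 1 + \boK_\varepsilon(T)^3 )$ emerges.
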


We are now in position to conclude the proof of Theorem~\ref{thm:conv-CLS}.

\begin{proof}[Proof of Theorem~\ref{thm:conv-CLS}]
Under the assumptions of Theorem~\ref{thm:conv-CLS}, we can apply Proposition~\ref{prop:estim-eps} (with $\sigma = 1/2$ for instance) and Lemma~\ref{lem:estim-CS}. They provide the existence of a positive number $C_1$, depending only on $k$, and a corresponding number $T_\varepsilon$, with
$$T_\varepsilon \geq \frac{1}{C_1 (\boK_\varepsilon^0)^2},$$
such that, under assumption~\eqref{cond:key} (with $A$ replaced by $C_1$), the solution $\Psi_\varepsilon$ to~\eqref{NLS-eps} with initial datum $\Psi_\varepsilon^0$, and the solution $\Psi$ to~\eqref{CS} with initial datum $\Psi^0$ are well-defined on the time interval $[0, T_\varepsilon]$. Moreover, the function $\Psi_\varepsilon$ satisfies condition~\eqref{eq:unif-bound} with $\sigma = 1/2$ on $[0, T_\varepsilon]$, and the quantity $\boK_\varepsilon(T_\varepsilon)$ in Proposition~\ref{prop:error} is controlled by
\begin{equation}
 \label{eq:cont-K-eps}
\boK_\varepsilon(T_\varepsilon) \leq C_1 \boK_\varepsilon^0.
\end{equation}
As a consequence, we can invoke Proposition~\ref{prop:error} with $\sigma = 1/2$, which gives the existence of another positive number $C_2$, depending only on $k$, such that
$$ \big\| \Psi_\varepsilon(\cdot, t) - \Psi(\cdot, t) \big\|_{H^{k - 2}} \leq \Big( \big\| \Psi_\varepsilon^0 - \Psi^0 \big\|_{H^{k - 2}} + \varepsilon \boK_\varepsilon(T_\varepsilon) \big( 1 + \boK_\varepsilon(T_\varepsilon)^3 \big) \Big) \, e^{C_2 \boK_\varepsilon(T_\varepsilon)^2 t},$$
for any $t\in [0, T_\varepsilon]$. Statement $(ii)$ in Theorem~\ref{thm:conv-CLS} follows, with $A = \max \{ C_1, C_1^4, C_1^2 C_2 \}$. This completes the proof of Theorem~\ref{thm:conv-CLS}.
\end{proof}

%%%%%%%%%%%%%%%%%%
%%%%%%%%%%%%%%%%%%
%%%%%%%%%%%%%%%%%%
\section{Details of the proofs}
\label{sec:details}
%%%%%%%%%%%%%%%%%%
%%%%%%%%%%%%%%%%%%
%%%%%%%%%%%%%%%%%%

%%%%%%%%%%%%%%%%%%%%%%%%%%%%%%%%%%
%%%%%%%%%%%%%%%%%%%%%%%%%%%%%%%%%%
\subsection{Proof of Corollary~\ref{cor:Cauchy-Psi-eps}}
\label{sub:Cauchy-Psi-eps}
%%%%%%%%%%%%%%%%%%%%%%%%%%%%%%%%%%
%%%%%%%%%%%%%%%%%%%%%%%%%%%%%%%%%%

The proof essentially reduces to translate the statements in Theorem~\ref{thm:Cauchy-LL} in terms of the nonlinear Schr\"odinger equation~\eqref{NLS-eps}. Indeed, consider an initial datum $\Psi_\varepsilon^0 \in H^k(\R^N)$ satisfying the assumptions of Corollary~\ref{cor:Cauchy-Psi-eps}. Coming back to the scaling in~\eqref{def:Psi-eps}, we set
$$m_1^0 = \varepsilon^\frac{1}{2} \, \Re \big( \Psi_\varepsilon^0 \big) \quad \text{and} \quad m_3^0 = \varepsilon^\frac{1}{2} \, \Im \big( \Psi_\varepsilon^0 \big),$$
while we can define the function
\begin{equation}
\label{def:m20}
m_2^0 = \big( 1 - \varepsilon |\Psi_\varepsilon^0|^2 \big)^\frac{1}{2},
\end{equation}
due to condition~\eqref{cond:petit-Psi-eps}. The initial datum $m^0 = (m_1^0, m_2^0, m_3^0)$ then lies in $\boE^k(\R^N)$. Hence, there exists a positive number $T_{\max}$ and a unique solution $m$ to~\eqref{LL} with initial datum $m^0$, which satisfies the five statements in Theorem~\ref{thm:Cauchy-LL}. As in~\eqref{def:Psi-eps}, we then set
$$\Psi_\varepsilon(x, t) = \varepsilon^{-\frac{1}{2}}e^\frac{i t}{\varepsilon} \, \big( m_1(x, t) + i \, m_3(x, t) \big).$$
In view of statement $(iii)$ in Theorem~\ref{thm:Cauchy-LL}, the Sobolev embedding theorem guarantees that the function $\Psi_\varepsilon$ belongs to $\boC^0([0, T_{\max}), \boC_b^0(\R^N))$. In particular, we are able to define the number
$$T_\varepsilon := \sup \big\{ T \in [0, T_{\max}) \text{ such that } \varepsilon^\frac{1}{2} \, \| \Psi_\varepsilon(\cdot, t) \|_{L^\infty} < 1 \text{ for any } t < T \big\} \leq T_{\max},$$
and this number is positive due to condition~\eqref{cond:petit-Psi-eps}. Note here that either $T_\varepsilon = T_{\max}$, or
$$\varepsilon^\frac{1}{2} \, \lim_{t \to T_\varepsilon} \big\| \Psi_\varepsilon(\cdot, t) \big\|_{L^\infty} = 1.$$

With these definitions at hand, statements $(i)$, $(ii)$ and $(iv)$ of Corollary~\ref{cor:Cauchy-Psi-eps} literally follow from the same statements in Theorem~\ref{thm:Cauchy-LL}. Moreover, the function $m_2$ is continuous on $\R^N \times [0, T_\varepsilon)$ by statement $(iii)$ in Theorem~\ref{thm:Cauchy-LL}, and it does not vanish on this set by definition of the number $T_\varepsilon$ due to the fact that $m$ is valued in $\S^2$. Therefore, we deduce from~\eqref{def:m20} that
\begin{equation}
\label{eq:m2}
m_2 = \big( 1 - \varepsilon |\Psi_\varepsilon|^2 \big)^\frac{1}{2},
\end{equation}
on $\R^N \times [0, T_\varepsilon)$. Recall here that the map $m$ is solution to~\eqref{LL}. Since this equation holds with $m \in L^\infty([0, T_{\max}), \boE^k(\R^N))$ and $\partial_t m \in L^\infty([0, T_{\max}), H^{k - 2}(\R^N))$, we can check that the functions $\check{m}$ and $m_2$ solve~\eqref{eq:sys-LL}, and it is enough to apply the chain rule theorem to $m_2$ in order to derive that the function $\Psi_\varepsilon$ is solution to~\eqref{NLS-eps} with initial datum $\Psi_\varepsilon^0$. A direct computation then provides the identity
\begin{equation}
\label{eq:scaled-energy}
|\nabla m|^2 + \frac{1}{\varepsilon} \Big( m_1^2 + m_3^2 \Big) = |\Psi_\varepsilon|^2 + \varepsilon |\nabla \Psi_\varepsilon|^2 + \frac{\varepsilon^2 \langle \Psi_\varepsilon, \nabla \Psi_\varepsilon \rangle_\C^2}{1 - \varepsilon |\Psi_\varepsilon|^2},
\end{equation}
and the conservation of the energy $\gE_\varepsilon$ in statement $(v)$ follows from the same statement in Theorem~\ref{thm:Cauchy-LL}. Note finally that this construction of solution $\Psi_\varepsilon$ to~\eqref{NLS-eps} guarantees the local Lipschitz continuity of the flow map in statement $(iv)$ of Corollary~\ref{cor:Cauchy-Psi-eps} due to the same property in Theorem~\ref{thm:Cauchy-LL}.

Concerning uniqueness, the argument is similar. Given another possible solution $\tilde{\Psi}_\varepsilon$ to~\eqref{NLS-eps} with initial data $\Psi_\varepsilon^0$ as in Corollary~\ref{cor:Cauchy-Psi-eps}, we set
$$\tilde{m}(x, t) = \Big( \varepsilon^\frac{1}{2} \, \Re \big( e^{-\frac{i t}{\varepsilon}} \tilde{\Psi}_\varepsilon(x, t) \big), \big( 1 - \varepsilon |\tilde{\Psi}_\varepsilon(x, t)|^2 \big)^\frac{1}{2}, \varepsilon^\frac{1}{2} \, \Im \big( e^{-\frac{i t}{\varepsilon}} \tilde{\Psi}_\varepsilon(x, t) \big) \Big),$$
and we check that the maps $\tilde{m}$ are solutions to~\eqref{LL} and that they satisfy the statements in Theorem~\ref{thm:Cauchy-LL} (on time intervals of the form $[0, \tilde{T}_\varepsilon)$). In view of the uniqueness statement in Theorem~\ref{thm:Cauchy-LL}, and of the previous construction of the solution $\Psi_\varepsilon$ to~\eqref{NLS-eps}, we obtain that $\tilde{\Psi}_\varepsilon = \Psi_\varepsilon$, that is the uniqueness of the solution $\Psi_\varepsilon$. Statement $(vi)$ follows from the same argument. This concludes the proof of Corollary~\ref{cor:Cauchy-Psi-eps}. \qed

%%%%%%%%%%%%%%%%%%%%%%%%%%%%%%%%%%%%%%%
%%%%%%%%%%%%%%%%%%%%%%%%%%%%%%%%%%%%%%%
\subsection{Proof of Proposition~\ref{prop:LL-energy-estimate}}
\label{sub:LL-est}
%%%%%%%%%%%%%%%%%%%%%%%%%%%%%%%%%%%%%%%
%%%%%%%%%%%%%%%%%%%%%%%%%%%%%%%%%%%%%%%

Let us first recall the Moser estimates
\begin{equation}
\label{eq:Moser}
\big\| \partial_x^{\alpha_1} f_1 \partial_x^{\alpha_2} f_2 \cdots \partial_x^{\alpha_j} f_j \big\|_{L^2} \leq C_{j, k} \max_{1 \leq i \leq j} \prod_{m \neq i} \big\| f_m \big\|_{L^\infty} \, \big\| f_i \big\|_{\dot{H}^\ell},
\end{equation}
which hold for any integers $(j, \ell) \in \N^2$, any $\alpha =(\alpha_1, \ldots, \alpha_j) \in \N^j$, with $\sum_{i = 1}^j \alpha_i = \ell$, and any functions $(f_1, \dots ,f_j) \in L^\infty(\R^N)^j \cap \dot{H}^\ell(\R^N)^j$ (see e.g.~\cite{Moser1, Hormand0}). Under the assumptions of Proposition~\ref{prop:LL-energy-estimate}, we derive from these estimates that the second-order derivative $\partial_{tt} m$ belongs to $\boC^0([0, T], H^k(\R^N))$. Hence, the energies $E_{\rm LL}^\ell$ are of class $\boC^1$ on $[0, T]$. Moreover, in view of~\eqref{eq:second-LL}, integrating by parts provides the formula
\begin{equation}
\label{arkansas}
\big[ E_{\rm LL}^\ell \big]'(t) = \big\langle \partial_t m(\cdot, t), F_\varepsilon(m)(\cdot, t) \big\rangle_{\dot{H}^{\ell - 2}} = \sum_{|\alpha| = \ell - 2} \int_{\R^N} \big\langle \partial_t \partial_x^\alpha m(x, t), \partial_x^\alpha F_\varepsilon(m)(x, t) \big\rangle_{\R^3} \, dx,
\end{equation}
for any $t \in [0, T]$. In order to obtain~\eqref{eq:energy-estimate-LL}, we now have to control the derivatives $\partial_x^\alpha F_\varepsilon(m)$ with respect to the various terms in the quantities $E_{\rm LL}^\ell$ and $E_{\rm LL}^{\ell - 1}$. Here, we face the difficulty that the derivative $\partial_x^\alpha F_\varepsilon(m)$ contains partial derivatives of order $\ell + 1$ of the function $m$, which cannot be bounded with respect to the quantities $E_{\rm LL}^\ell$ and $E_{\rm LL}^{\ell - 1}$. In order to by-pass this difficulty, we bring to light a hidden geometric cancellation in the scalar product in the right-hand side of~\eqref{arkansas}.

Let us decompose the derivative $\partial_x^\alpha F_\varepsilon(m)$ as
\begin{equation}
\label{arizona}
\partial_x^\alpha F_\varepsilon(m) = - 2 \sum_{1 \leq i, j \leq N} \partial_{i j} \big( \langle \partial_i m, \partial_j m \rangle_{\R^3} \big) m + G_1(m) - \frac{1}{\varepsilon} G_2(m) + \frac{1}{\varepsilon^2} G_3(m),
\end{equation}
according to the definition of $F_\varepsilon(m)$ in~\eqref{usa}. By the Leibniz formula, the quantity $G_1(m)$ is here given by
\begin{align*}
G_1(m) := \sum_{1 \leq i, j \leq N} \Big( & \partial_x^\alpha \partial_i \big( 2 \langle \partial_i m, \partial_j m \rangle_{\R^3} \partial_j m - |\partial_j m|^2 \partial_i m \big)\\
& - 2 \sum _{0 \leq \beta \leq \tilde{\alpha}, \beta \neq 0} \binom{\tilde{\alpha}}{\beta} \partial_x^{\tilde{\alpha} - \beta} \big( \langle \partial_i m, \partial_j m \rangle_{\R^3} \big) \partial_x^\beta m \Big),
\end{align*}
where $\partial_x^{\tilde{\alpha}} := \partial_x^\alpha \partial_{i j}$. As a consequence, we directly infer from the Leibniz formula and the Moser estimates~\eqref{eq:Moser} that
\begin{equation}
\label{missouri}
\big\| G_1(m) \big\|_{L^2} \leq C_k \, \| \nabla m \|_{L^\infty}^2 \, \| \nabla m \|_{\dot{H}^{\ell - 1}}.
\end{equation}
Here as in the sequel, the notation $C_k$ refers to a positive number depending only on $k$. Observe that the uniform boundedness of the gradient $\nabla m$ is a consequence of the Sobolev embedding theorem and the assumption $k > N/2 + 1$. Similarly, we have
\begin{equation}
\label{wyoming}
\begin{split}
\big\| G_2(m) \big\|_{L^2} \leq C_k \bigg( & \| \nabla m \|_{\dot{H}^{\ell - 1}} \, \big( \| m_1 \|_{L^\infty}^2 + \| m_3 \|_{L^\infty}^2 \big) + \| \nabla m \|_{L^\infty}^2 \, \big( \| m_1 \|_{\dot{H}^{\ell - 2}} + \| m_3 \|_{\dot{H}^{\ell - 2}} \big)\\
& + \| \nabla m \|_{L^\infty} \, \| \nabla m \|_{\dot{H}^{\ell - 2}} \, \big( \| m_1 \|_{L^\infty} + \| m_3 \|_{L^\infty} \big)\bigg),
\end{split}
\end{equation}
for the quantity
\begin{align*}
G_2(m) := & \partial_x^\alpha \Big( (m_1^2 + 3 m_3^2) \Delta m_1 e_1 + (3 m_1^2 + m_3^2) \Delta m_3 e_3 - 2 m_1 m_3 (\Delta m_1 e_3 + \Delta m_3 e_1)\\
& + (m_1^2 + m_3^2) \Delta m_2 e_2 - |\nabla m|^2 (m_1 e_1 + m_3 e_3) + \nabla \big( m_1^2 + m_3^2 \big) \cdot \nabla m \Big),
\end{align*}
and
\begin{equation}
\label{nevada}
\big\| G_3(m) \big\|_{L^2} \leq C_k \big( \| m_1 \|_{L^\infty}^2 + \| m_3 \|_{L^\infty}^2 \big) \, \big( \| m_1 \|_{\dot{H}^{\ell - 2}} + \| m_3 \|_{\dot{H}^{\ell - 2}} \big),
\end{equation}
where
$$G_3(m) := \partial_x^\alpha \big( (m_1^2+ m_3^2) (m_1 e_1+m_3 e_3) \big).$$

We then deal with the remaining term of decomposition~\eqref{arizona}. Coming back to~\eqref{arkansas}, we integrate by parts in order to write
\begin{equation}
\label{carolina}
\begin{split}
\int_{\R^N} \Big\langle \partial_t \partial_x^\alpha m, & \partial_x^\alpha \partial_{ij} \big( \langle \partial_i m, \partial_j m \rangle_{\R^3} \big) m \Big\rangle_{\R^3}\\
& = - \int_{\R^N} \partial_x^\alpha \partial_j \big( \langle \partial_i m, \partial_j m \rangle_{\R^3} \big) \Big( \big\langle \partial_t \partial_x^\alpha \partial_i m, m \big\rangle_{\R^3} + \big\langle \partial_t \partial_x^\alpha m, \partial_i m \big\rangle_{\R^3} \Big),
\end{split}
\end{equation}
for any $1 \leq i, j \leq N$. Arguing as before, we first obtain
\begin{equation}
\label{connecticut}
\bigg| \int_{\R^N} \partial_x^\alpha \partial_j \big( \langle \partial_i m, \partial_j m \rangle_{\R^3} \big) \, \big\langle \partial_t \partial_x^\alpha m, \partial_i m \big\rangle_{\R^3} \bigg| \leq C_k \, \| \nabla m \|_{L^\infty}^2 \, \| \nabla m \|_{\dot{H}^{\ell - 1}} \, \| \partial_t \partial_x^\alpha m \|_{L^2}.
\end{equation}
On the other hand, it follows from the Landau-Lifshitz equation and the Leibniz formula that
\begin{align*}
\partial_x^\alpha & \partial_j \big( \langle \partial_i m, \partial_j m \rangle_{\R^3} \big) \, \big\langle \partial_t \partial_x^{\alpha^*} m, m \big\rangle_{\R^3}\\
= & - \sum_{\beta \leq \alpha^*} \binom{\alpha^*}{\beta} \partial_x^\alpha \partial_j \big( \langle \partial_i m, \partial_j m \rangle_{\R^3} \big) \, \big\langle \partial_x^\beta m \times \partial_x^{\alpha^* - \beta} \big( \Delta m - \frac{1}{\varepsilon} \big( m_1 e_1 +m_3 e_3 \big) \big), m \big\rangle_{\R^3},
\end{align*}
where $\partial_x^{\alpha^*} := \partial_x^\alpha \partial_i$. The right-hand side of this formula vanishes when $\beta = 0$. Due to this cancellation, we can deduce as before from the Leibniz formula and the Moser estimates~\eqref{eq:Moser} that
\begin{align*}
\bigg| \int_{\R^N} \partial_x^\alpha \partial_j \big( \langle \partial_i m, & \partial_j m \rangle_{\R^3} \big) \big\langle \partial_t \partial_x^{\alpha^*} m, m \big\rangle_{\R^3} \bigg| \leq C_k \bigg( \| \nabla m \|_{L^\infty}^2 \| \nabla m \|_{\dot{H}^{\ell - 1}}^2 + \frac{1}{\varepsilon} \| \nabla m \|_{L^\infty} \| \nabla m \|_{\dot{H}^{\ell - 1}} \times\\
& \times \Big( \| \nabla m \|_{L^\infty} \big( \| m_1 \|_{\dot{H}^{\ell - 2}} + \| m_3 \|_{\dot{H}^{\ell - 2}} \big) + \| \nabla m \|_{\dot{H}^{\ell - 2}} \big( \| m_1 \|_{L^\infty} + \| m_3 \|_{L^\infty} \big) \Big) \bigg).
\end{align*}
We finally collect this estimate with~\eqref{missouri},~\eqref{wyoming},~\eqref{nevada} and~\eqref{connecticut} in order to bound the right-hand side of~\eqref{arkansas}. Using definition~\eqref{def:E-LL-k}, the fact that $0 < \varepsilon < 1$, and the Young inequality $2 a b \leq a^2+b^2$, we obtain~\eqref{eq:energy-estimate-LL}. This completes the proof of Proposition~\ref{prop:LL-energy-estimate}. \qed

%%%%%%%%%%%%%%%%%%%%%%%%%%%%%%%%%%%%%
%%%%%%%%%%%%%%%%%%%%%%%%%%%%%%%%%%%%%
\subsection{Proof of Corollary~\ref{cor:NLS-energy-estimate}}
\label{sub:NLS-est}
%%%%%%%%%%%%%%%%%%%%%%%%%%%%%%%%%%%%%
%%%%%%%%%%%%%%%%%%%%%%%%%%%%%%%%%%%%%

In order to establish inequality~\eqref{eq:energy-estimate-NLS}, we rewrite~\eqref{eq:energy-estimate-LL} in terms of the function $\Psi_\varepsilon$. Given an initial datum $\Psi_\varepsilon^0 \in H^{k + 5}(\R^N)$ satisfying~\eqref{cond:petit-Psi-eps}, the corresponding solution $\Psi_\varepsilon$ to~\eqref{NLS-eps} is in $\boC^0([0, T_\varepsilon), H^{k + 4}(\R^N))$ by Corollary~\ref{cor:Cauchy-Psi-eps}. Here, the characterization of the maximal time $T_\varepsilon$ guarantees that
$$\varepsilon^\frac{1}{2} \big\| \Psi_\varepsilon(\cdot, t) \big\|_{L^\infty} < 1,$$
for any $0 \leq t < T_\varepsilon$. In particular, it follows from the continuity properties of the solution $\Psi_\varepsilon$ and the Sobolev embedding theorem that
\begin{equation}
\label{uhila}
\sigma_T := \varepsilon^\frac{1}{2} \max_{t \in [0, T]} \big\| \Psi_\varepsilon(\cdot, t) \big\|_{L^\infty} < 1,
\end{equation}
for any $0 \leq T < T_\varepsilon$. 

In another direction, the function $m$ defined by~\eqref{eq:m-Psi-eps} solves~\eqref{LL} for the corresponding initial datum $m^0$, and we can prove that it is in $\boC^0([0, T], \boE^{k + 4}(\R^N))$, with $\partial_t m \in \boC^0([0, T], H^{k + 2}(\R^N))$. Indeed, the fact that $m_1$ and $m_3$ belong to $\boC^0([0, T], H^{k + 4}(\R^N))$ is a direct consequence of their definition and of the continuity properties of the function $\Psi_\varepsilon$. Concerning the function $m_2$, we write it as
\begin{equation}
\label{falgoux}
m_2 = \eta(\varepsilon^\frac{1}{2} \Psi_\varepsilon).
\end{equation}
In view of~\eqref{uhila}, the function $\eta$ in this formula can be chosen as a smooth function such that
$$\eta(x) = (1 - |x|^2)^\frac{1}{2},$$
when $|x| \leq \sigma_T$, and $\eta(x) = (1 + \sigma_T)/2$ for $|x|$ close enough to $1$. As a consequence, we have
\begin{equation}
\label{kayser}
m_2(\cdot, t) - m_2(\cdot, s) = \varepsilon^\frac{1}{2} \int_0^1 \eta' \Big( \varepsilon^\frac{1}{2} \big( (1 - \tau) \Psi_\varepsilon(\cdot, s) + \tau \Psi_\varepsilon(\cdot, t) \big) \Big) \, \big( \Psi_\varepsilon(\cdot, t) - \Psi_\varepsilon(\cdot, s) \big) \, d\tau,
\end{equation}
for any $0 \leq s \leq t \leq T$. By the Sobolev embedding theorem, we infer that
\begin{equation}
\label{vahaamahina}
\big\| m_2(\cdot, t) - m_2(\cdot, s) \big\|_{L^\infty} \leq \varepsilon^\frac{1}{2} \big\| \eta' \|_{L^\infty} \, \big\| \Psi_\varepsilon(\cdot, t) - \Psi_\varepsilon(\cdot, s) \big\|_{L^\infty} \leq \varepsilon^\frac{1}{2} \big\| \eta' \|_{L^\infty} \, \big\| \Psi_\varepsilon(\cdot, t) - \Psi_\varepsilon(\cdot, s) \big\|_{H^{k + 4}}.
\end{equation}
At this stage, let us recall the Moser estimate
\begin{equation}
\label{eq:Moser-comp}
\big\| F(f) \big\|_{\dot{H}^\ell} \leq C_\ell \| f \|_{\dot{H}^\ell} \, \max_{1 \leq m \leq \ell} \| F^{(m)} \|_{L^\infty} \, \| f \|_{L^\infty}^{m - 1},
\end{equation}
which holds for $\ell \geq 1$, $f \in L^\infty(\R^N) \cap \dot{H}^\ell(\R^N)$, and $F \in \boC^\ell(\R)$, with bounded derivatives up to order $\ell$ (see e.g.~\cite{Moser1, Hormand0}). Applying this estimate, \eqref{eq:Moser} and \eqref{uhila} to~\eqref{kayser}, we get the existence of a positive number $A$, also depending only on $k$, $\eta$ and $\sigma_T$, such that 
\begin{align*}
\big\| m_2(\cdot, t) - m_2(\cdot, s) \big\|_{\dot{H}^\ell} \leq A \varepsilon^\frac{1}{2} \Big( \big\| & \Psi_\varepsilon(\cdot, t) - \Psi_\varepsilon(\cdot, s) \big\|_{\dot{H}^\ell}\\
& + \varepsilon^\frac{1}{2} \big\| \Psi_\varepsilon(\cdot, t) - \Psi_\varepsilon(\cdot, s) \big\|_{L^\infty} \big( \big\| \Psi_\varepsilon(\cdot, t) \big\|_{\dot{H}^\ell} + \big\| \Psi_\varepsilon(\cdot, s) \big\|_{\dot{H}^\ell} \big) \Big),
\end{align*}
for any $1 \leq \ell \leq k + 4$. Combining with~\eqref{vahaamahina}, we finally deduce that $m$ is continuous on $[0, T]$, with values in $\boE^{k + 4}(\R^N)$. Since this map is solution to~\eqref{LL}, it follows from~\eqref{eq:Moser} and the Sobolev embedding theorem that its time derivative $\partial_t m$ is in $\boC^0([0, T], H^{k + 2}(\R^N))$.

As a consequence, we are in position to apply Proposition~\ref{prop:LL-energy-estimate} for the solution $m$. Coming back to~\eqref{eq:m-Psi-eps}, we express the various terms in inequality~\eqref{eq:energy-estimate-LL} as
$$\varepsilon E_{\rm LL}^\ell(t) = \gE_\varepsilon^\ell(t),$$
for any $2 \leq \ell \leq k + 2$, and
\begin{align*}
\| m_1(\cdot, t) \|_{L^\infty}^2 + \| m_3(\cdot, t) \|_{L^\infty}^2 & + \| \nabla m(\cdot, t) \|_{L^\infty}^2\\
& = \varepsilon \Big( \| \Psi_\varepsilon(\cdot, t) \|_{L^\infty}^2 + \| \nabla \Psi_\varepsilon(\cdot, t) \|_{L^\infty}^2 + \varepsilon \Big\| \frac{\langle \Psi_\varepsilon(\cdot, t), \nabla \Psi_\varepsilon(\cdot, t) \rangle_\C}{(1 - \varepsilon |\Psi_\varepsilon(\cdot, t)|^2)^\frac{1}{2}} \Big\|_{L^\infty}^2 \Big).
\end{align*}
Note also that $E_{\rm LL}^1(t) = \gE_\varepsilon^1(t)$ by~\eqref{eq:scaled-energy}. In conclusion, the continuous differentiability of the energies $\gE_\varepsilon^\ell$, as well as inequality~\eqref{eq:energy-estimate-NLS}, readily follow from Proposition~\ref{prop:LL-energy-estimate}. This completes the proof of Corollary~\ref{cor:NLS-energy-estimate}. \qed

%%%%%%%%%%%%%%%%%%%%%%%%%%%%%%%%%%%
%%%%%%%%%%%%%%%%%%%%%%%%%%%%%%%%%%%
\subsection{Proof of Lemma~\ref{lem:cont-energies-NLS}}
\label{sub:cont-energies}
%%%%%%%%%%%%%%%%%%%%%%%%%%%%%%%%%%%
%%%%%%%%%%%%%%%%%%%%%%%%%%%%%%%%%%%

We first infer from the proof of Corollary~\ref{cor:NLS-energy-estimate} that the energies $\gE_\varepsilon^\ell$ are well-defined on $[0, T]$ for $1 \leq \ell \leq k + 2$, when the solution $\Psi_\varepsilon$ to~\eqref{NLS-eps} lies in $\boC^0([0, T], H^{k + 4}(\R^N)$ and satisfies condition~\eqref{eq:cond-Psi-eps-T}. We also observe that the left-hand side inequalities in~\eqref{eq:cont-gEl-NLS} and~\eqref{eq:cont-gE1-NLS} are direct consequences of the definitions of the energies $\gE_\varepsilon^\ell$. Concerning the right-hand side inequalities, we first deal with~\eqref{eq:cont-gE1-NLS}, for which condition~\eqref{eq:cond-Psi-eps-T} guarantees that
$$\bigg| \frac{\varepsilon^2 \langle \Psi_\varepsilon(x, t), \nabla \Psi_\varepsilon(x, t) \rangle_\C^2}{1 - \varepsilon |\Psi_\varepsilon(x, t)|^2} \bigg| \leq \frac{\varepsilon \sigma_T^2 |\nabla \Psi_\varepsilon(x, t)|^2}{1 - \sigma_T^2},$$
for any $(x, t) \in \R^N \times [0, T]$. The inequality then follows, with $C = 1/(2 - 2 \sigma_T^2)$.

We argue similarly for the right-hand side inequality in~\eqref{eq:cont-gEl-NLS}. We take advantage of the uniform bound given by~\eqref{eq:cond-Psi-eps-T} in order to control the space derivatives of the function
$(1 - \varepsilon |\Psi_\varepsilon|^2)^{1/2}$. As in the proof of Corollary~\ref{cor:NLS-energy-estimate}, we introduce a smooth function such that
$$\eta(x) = (1 - |x|^2)^\frac{1}{2},$$
when $|x| \leq \sigma_T$, and $\eta(x) = (1 + \sigma_T)/2$ for $|x|$ close enough to $1$. Since $(1 - \varepsilon |\Psi_\varepsilon|^2)^{1/2} = \eta(\varepsilon^\frac{1}{2} \Psi_\varepsilon)$ by~\eqref{eq:cond-Psi-eps-T}, we again deduce from the Moser estimate~\eqref{eq:Moser-comp} that
$$\big\| \Delta (1 - \varepsilon |\Psi_\varepsilon(\cdot, t)|^2)^\frac{1}{2} \big\|_{\dot{H}^{\ell - 2}} \leq C \varepsilon^\frac{1}{2} \big\| \Delta \Psi_\varepsilon(\cdot, t) \big\|_{\dot{H}^{\ell - 2}} \max_{1 \leq m \leq \ell} \varepsilon^\frac{m - 1}{2} \big\| \Psi_\varepsilon(\cdot, t) \big\|_{L^\infty}^{m - 1},$$
where $C$ refers, here as in the sequel, to a positive number depending only on $k$ and $\sigma_T$ (once the choice of the function $\eta$ is fixed). Condition~\eqref{eq:cond-Psi-eps-T} then provides
\begin{equation}
\label{fidelma}
\varepsilon \big\| \Delta (1 - \varepsilon |\Psi_\varepsilon(\cdot, t)|^2)^\frac{1}{2} \big\|_{\dot{H}^{\ell - 2}}^2 \leq C^2 \varepsilon^2 \big\| \Delta \Psi_\varepsilon(\cdot, t) \big\|_{\dot{H}^{\ell - 2}}^2.
\end{equation}

At this stage, we are left with the estimates of the two terms in~\eqref{def:gE-NLS}, which depend on the time derivative of the function $\Psi_\varepsilon$. Concerning the first one, we rewrite~\eqref{NLS-eps} as
$$i \varepsilon \partial_t \Psi_\varepsilon + \varepsilon \div \Big( \big( 1 - \varepsilon |\Psi_\varepsilon|^2 \big)^\frac{1}{2} \nabla \Psi_\varepsilon - \nabla \big( 1 - \varepsilon |\Psi_\varepsilon|^2 \big)^\frac{1}{2} \Psi_\varepsilon \Big) + \big( 1 - (1 - \varepsilon |\Psi_\varepsilon|^2)^\frac{1}{2} \big) \Psi_\varepsilon = 0,$$
so that
\begin{equation}
 \label{eadulf}
\begin{split}
\big\| \varepsilon \partial_t \Psi_\varepsilon(\cdot, t) & - i \Psi_\varepsilon(\cdot, t) \big\|_{\dot{H}^{\ell - 2}} \leq \varepsilon \big\| \big( 1 - \varepsilon |\Psi_\varepsilon(\cdot, t)|^2 \big)^\frac{1}{2} \nabla \Psi_\varepsilon(\cdot, t) \big\|_{\dot{H}^{\ell - 1}}\\
& + \varepsilon \big\| \nabla \big( 1 - \varepsilon |\Psi_\varepsilon(\cdot, t)|^2 \big)^\frac{1}{2} \Psi_\varepsilon(\cdot, t) \big\|_{\dot{H}^{\ell - 1}} + \big\| \big( 1 - \varepsilon |\Psi_\varepsilon(\cdot, t)|^2 \big)^\frac{1}{2} \Psi_\varepsilon(\cdot, t) \big\|_{\dot{H}^{\ell - 2}}.
\end{split}
\end{equation}
Invoking the Moser estimate~\eqref{eq:Moser}, we have
\begin{align*}
\big\| \big( 1 - \varepsilon |\Psi_\varepsilon(\cdot, t)|^2 \big)^\frac{1}{2} \nabla \Psi_\varepsilon(\cdot, t) \big\|_{\dot{H}^{\ell - 1}} & + \big\| \nabla \big( 1 - \varepsilon |\Psi_\varepsilon(\cdot, t)|^2 \big)^\frac{1}{2} \Psi_\varepsilon(\cdot, t) \big\|_{\dot{H}^{\ell - 1}}\\
& \leq C \Big( \big\| \Psi_\varepsilon(\cdot, t) \big\|_{\dot{H}^\ell} + \big\| \Psi_\varepsilon(\cdot, t) \big\|_{L^\infty} \big\| \big( 1 - \varepsilon |\Psi_\varepsilon(\cdot, t)|^2 \big)^\frac{1}{2} \big\|_{\dot{H}^\ell} \Big),
\end{align*}
so that~\eqref{eq:cond-Psi-eps-T} and~\eqref{fidelma} provide
$$\big\| \big( 1 - \varepsilon |\Psi_\varepsilon(\cdot, t)|^2 \big)^\frac{1}{2} \nabla \Psi_\varepsilon(\cdot, t) \big\|_{\dot{H}^{\ell - 1}} + \big\| \nabla \big( 1 - \varepsilon |\Psi_\varepsilon(\cdot, t)|^2 \big)^\frac{1}{2} \Psi_\varepsilon(\cdot, t) \big\|_{\dot{H}^{\ell - 1}} \leq C \big\| \Delta \Psi_\varepsilon(\cdot, t) \big\|_{\dot{H}^{\ell - 2}}.$$
Similarly, we can bound the last term in the right-hand side of~\eqref{eadulf} by $\| \Psi_\varepsilon \|_{\dot{H}^{\ell - 2}}$ if $\ell = 2$, and by
$$\big\| \big( 1 - \varepsilon |\Psi_\varepsilon(\cdot, t)|^2 \big)^\frac{1}{2} \Psi_\varepsilon(\cdot, t) \big\|_{\dot{H}^{\ell - 2}} \leq C \Big( \| \Psi_\varepsilon(\cdot, t) \|_{\dot{H}^{\ell - 2}} + \big\| \Psi_\varepsilon(\cdot, t) \big\|_{L^\infty} \big\| \big( 1 - \varepsilon |\Psi_\varepsilon(\cdot, t)|^2 \big)^\frac{1}{2} \big\|_{\dot{H}^{\ell - 2}} \Big),$$
otherwise. Since $\ell - 2 \geq 1$ in this case, we again infer from~\eqref{eq:cond-Psi-eps-T} and~\eqref{eq:Moser-comp} that
$$\big\| \Psi_\varepsilon(\cdot, t) \big\|_{L^\infty} \big\| \big( 1 - \varepsilon |\Psi_\varepsilon(\cdot, t)|^2 \big)^\frac{1}{2} \big\|_{\dot{H}^{\ell - 2}} \leq C \big\| \Psi_\varepsilon(\cdot, t) \big\|_{\dot{H}^{\ell - 2}}.$$
Gathering the previous estimates of the right-hand side of~\eqref{eadulf}, we finally get
\begin{equation}
\label{colgu}
\big\| \varepsilon \partial_t \Psi_\varepsilon(\cdot, t) - i \Psi_\varepsilon(\cdot, t) \big\|_{\dot{H}^{\ell - 2}}^2 \leq C \Big( \varepsilon^2 \big\| \Delta \Psi_\varepsilon(\cdot, t) \big\|_{\dot{H}^{\ell - 2}}^2 + \big\| \Psi_\varepsilon(\cdot, t) \big\|_{\dot{H}^{\ell - 2}}^2 \Big).
\end{equation}

We now turn to the last term in~\eqref{def:gE-NLS}. Coming back to~\eqref{NLS-eps}, we infer that
$$\partial_t \big( 1 - \varepsilon |\Psi_\varepsilon|^2 \big)^\frac{1}{2} = \varepsilon \div \langle i \Psi_\varepsilon, \nabla \Psi_\varepsilon \rangle_\C,$$
and we again deduce from~\eqref{eq:cond-Psi-eps-T} and~\eqref{eq:Moser} that
$$\varepsilon \big\| \partial_t \big( 1 - \varepsilon |\Psi_\varepsilon(\cdot, t)|^2 \big)^\frac{1}{2} \big\|_{\dot{H}^{\ell - 2}}^2 \leq \varepsilon^3 \big\| \Psi_\varepsilon(\cdot, t) \big\|_{L^\infty}^2 \big\| \Delta \Psi_\varepsilon(\cdot, t) \big\|_{\dot{H}^{\ell - 2}}^2 \leq C \varepsilon^2 \big\| \Delta \Psi_\varepsilon(\cdot, t) \big\|_{\dot{H}^{\ell - 2}}^2.$$
Combining this inequality with~\eqref{eadulf} and~\eqref{colgu}, we conclude that the energy $\gE_\varepsilon^\ell(t)$ can be bounded from above according to~\eqref{eq:cont-gEl-NLS}. This completes the proof of Lemma~\ref{lem:cont-energies-NLS}. \qed

%%%%%%%%%%%%%%%%%%%%%%%%%%%%%%%%%
%%%%%%%%%%%%%%%%%%%%%%%%%%%%%%%%%
\subsection{Proof of Proposition~\ref{prop:estim-eps}}
\label{sub:estim-eps}
%%%%%%%%%%%%%%%%%%%%%%%%%%%%%%%%%
%%%%%%%%%%%%%%%%%%%%%%%%%%%%%%%%%

The proof relies on a continuation argument, which is based on the Sobolev control of the solution $\Psi_\varepsilon$ provided by the energies $\gE_\varepsilon^k$. Assume first that the initial condition $\Psi_\varepsilon^0$ lies in $H^{k + 5}(\R^N)$ and satisfies condition~\eqref{cond:Psi-eps-petit-0} for a positive number $C_*$ to be fixed later. In this case, Corollary~\ref{cor:Cauchy-Psi-eps} yields the existence of a maximal time $T_{\max}$ and of a unique solution $\Psi_\varepsilon \in \boC^0([0, T_{\max}), H^{k + 4}(\R^N))$ to~\eqref{NLS-eps} with initial datum $\Psi_\varepsilon^0$. In particular, we infer from Corollary~\ref{cor:NLS-energy-estimate} that the quantity $\Sigma_\varepsilon^{k + 2}$ defined by
$$\Sigma_\varepsilon^{k + 2} := \sum_{\ell = 1}^{k + 2} \gE_\varepsilon^\ell,$$ 
is well-defined and of class $\boC^1$ on $[0, T_{\max})$. On the other hand, we can invoke the Sobolev embedding theorem so as to find a positive number $C_1$, depending only on $k$, such that 
$$\varepsilon^\frac{1}{2} \big\| \Psi_\varepsilon^0 \big\|_{L^\infty} \leq \varepsilon^\frac{1}{2} C_1 \big\| \Psi_\varepsilon ^0 \big\|_{H^{k - 1}}.$$
Assuming that condition~\eqref{cond:Psi-eps-petit-0} holds for a number $C_*$ such that $\sigma C_* \geq 2 C_1$, we obtain
$$\varepsilon^\frac{1}{2} \big\| \Psi_\varepsilon^0 \big\|_{L^\infty}\leq \frac{C_1}{C_*} \leq \frac{\sigma}{2}.$$
As a consequence of the continuity properties of the quantity $\Sigma_\varepsilon^{k + 2}$ and of the solution $\Psi_\varepsilon$, we deduce that the stopping time 
\begin{equation}
\label{def:T*}
T_* := \sup \Big\{ t \in [0, T_{\max}) : \varepsilon^\frac{1}{2} \big\| \Psi_\varepsilon(\cdot, \tau) \big\|_{L^\infty}\leq \sigma \text{ and } \Sigma_\varepsilon^{k + 2}(\tau) \leq 2 \Sigma_\varepsilon^{k + 2}(0) \ {\rm for} \ {\rm any} \ \tau \in [0, t] \Big\},
\end{equation}
is positive.

At this stage, we go back to inequality~\eqref{eq:energy-estimate-NLS} in order to find a further positive number $C_2$, also depending only on $k$, such that
$$\big[ \Sigma_\varepsilon^{k + 2} \big]'(t) \leq C_2 \, \bigg( \| \Psi_\varepsilon(\cdot, t) \|_{L^\infty}^2 + \| \nabla \Psi_\varepsilon(\cdot, t) \|_{L^\infty}^2 + \varepsilon \Big\| \frac{\langle \Psi_\varepsilon(\cdot, t), \nabla \Psi_\varepsilon(\cdot, t) \rangle_\C}{(1 - \varepsilon |\Psi_\varepsilon(\cdot, t)|^2)^\frac{1}{2}} \Big\|_{L^\infty}^2 \bigg) \, \Sigma_\varepsilon^{k + 2}(t),$$
for any $0 \leq t < T_*$. Due to the definition of $T_*$, we observe that
$$\varepsilon \Big\| \frac{\langle \Psi_\varepsilon(\cdot, t), \nabla \Psi_\varepsilon(\cdot, t) \rangle_\C}{(1 - \varepsilon |\Psi_\varepsilon(\cdot, t)|^2)^\frac{1}{2}} \Big\|_{L^\infty}^2 \leq \frac{1}{1 - \sigma^2} \| \nabla \Psi_\varepsilon(\cdot, t) \|_{L^\infty}^2.$$
Combining these inequalities with the Sobolev embedding theorem, we are led to
$$\big[ \Sigma_\varepsilon^{k + 2} \big]'(t) \leq \frac{C_1^2 C_2}{1 - \sigma^2} \, \Sigma_\varepsilon^{k + 2}(t)^2.$$
Setting
\begin{equation}
\label{def:T-eps}
T_\varepsilon := \frac{1 - \sigma^2}{2 C_1^2 C_2 \Sigma_\varepsilon^{k + 2}(0)},
\end{equation}
and integrating the previous inequality, we infer that
\begin{equation}
\label{aichu}
\Sigma_\varepsilon^{k + 2}(t) \leq \frac{(1 - \sigma^2) \Sigma_\varepsilon^{k + 2}(0)}{1 - \sigma^2 - C_1^2 C_2 \Sigma_\varepsilon^{k + 2}(0) t} \leq 2 \Sigma_\varepsilon^{k + 2}(0),
\end{equation}
for any $t < T_\varepsilon$. Invoking once again the Sobolev embedding theorem and the definition of the quantity $\Sigma_\varepsilon^{k + 2}$, we also get
$$\varepsilon^\frac{1}{2} \| \Psi_\varepsilon(\cdot, t) \|_{L^\infty} \leq C_1 \varepsilon^\frac{1}{2} \Sigma_\varepsilon^{k + 2}(t)^\frac{1}{2} \leq \sqrt{2} C_1 \varepsilon^\frac{1}{2} \Sigma_\varepsilon^{k + 2}(0)^\frac{1}{2}.$$
In view of Lemma~\ref{lem:cont-energies-NLS}, we infer the existence of a positive number $C_3$, depending only on $k$ and $\sigma$, such that
$$\varepsilon^\frac{1}{2} \| \Psi_\varepsilon(\cdot, t) \|_{L^\infty} \leq \sqrt{2} C_1 C_3 \varepsilon^\frac{1}{2} \Big( \big\| \Psi_\varepsilon^0 \big\|_{H^k} + \varepsilon^\frac{1}{2} \big\| \nabla \Psi_\varepsilon^0 \big\|_{\dot{H}^k} + \varepsilon \big\| \Delta \Psi_\varepsilon^0 \big\|_{\dot{H}^k} \Big),$$
again when $t < T_\varepsilon$. Enlarging $C_*$ so that $\sqrt{2} C_1 C_3 \leq \sigma C_*$, we deduce that
\begin{equation}
\label{finguine}
\varepsilon^\frac{1}{2} \| \Psi_\varepsilon(\cdot, t) \|_{L^\infty} \leq \sigma.
\end{equation}
In view of~\eqref{aichu}, a continuation argument then guarantees that either $T_\varepsilon \leq T_* \leq T_{\max}$, or $T_* = T_{\max} < T_\varepsilon$. In this latter case, it results from the conditions in~\eqref{eq:cond-Tmax-NLS-eps} and from~\eqref{finguine} that
\begin{equation}
\label{enda}
\int_0^{T_{\max}} \big\| \nabla \Psi_\varepsilon(\cdot, t) \big\|_{L^\infty}^2 \, dt = \infty.
\end{equation}
On the other hand, as a further consequence of the Sobolev embedding theorem, of Lemma~\ref{lem:cont-energies-NLS} and of~\eqref{aichu}, we have
$$\int_0^{T_{\max}} \big\| \nabla \Psi_\varepsilon(\cdot, t) \big\|_{L^\infty}^2 \, dt \leq C_1^2 \int_0^{T_{\max}} \big\| \Psi_\varepsilon(\cdot, t) \big\|_{H^k}^2 \, dt \leq 4 C_1^2 \Sigma_\varepsilon^{k + 2}(0) T_{\max}.$$
When $T_{\max} < T_\varepsilon$, definition~\eqref{def:T-eps} yields
$$\int_0^{T_{\max}} \big\| \nabla \Psi_\varepsilon(\cdot, t) \big\|_{L^\infty}^2 \, dt \leq \frac{2 - 2 \sigma^2}{C_2} < \infty,$$
which contradicts~\eqref{enda}. As a conclusion, the stopping time $T_*$ is at least equal to $T_\varepsilon$, and we derive from Lemma~\ref{lem:cont-energies-NLS} and from~\eqref{aichu} that
\begin{align*}
\big\| \Psi_\varepsilon(\cdot, t) \big\|_{\dot{H}^k}^2 & + \varepsilon \big\| \nabla \Psi_\varepsilon(\cdot, t) \big\|_{\dot{H}^k}^2 + \varepsilon^2 \big\| \Delta \Psi_\varepsilon(\cdot, t) \big\|_{\dot{H}^k}^2\\
& \leq 2 \Sigma_\varepsilon^{k + 2}(t) \leq 4 \Sigma_\varepsilon^{k + 2}(0) \leq 4 C_3^2 \Big( \big\| \Psi_\varepsilon^0 \big\|_{\dot{H}^k}^2 + \varepsilon \big\| \nabla \Psi_\varepsilon^0 \big\|_{\dot{H}^k}^2 + \varepsilon^2 \big\| \Delta \Psi_\varepsilon^0 \big\|_{\dot{H}^k}^2 \Big),
\end{align*}
for any $0 \leq t \leq T_\varepsilon$. Similarly, we derive from~\eqref{def:T-eps} that
$$T_\varepsilon \geq \frac{1}{2 C_1^2 C_2 C_3^2 \big( \| \Psi_\varepsilon^0 \|_{\dot{H}^k}^2 + \varepsilon \| \nabla \Psi_\varepsilon^0 \|_{\dot{H}^k}^2 + \varepsilon^2 \| \Delta \Psi_\varepsilon^0 \|_{\dot{H}^k}^2 \big)}.$$
It then remains to again enlarge the number $C_*$ so that $C_* \geq 4 C_3^2$ and $C_*\geq 2 C_1^2 C_2 C_3^2$ in order to complete the proof of Proposition~\ref{prop:estim-eps}, provided that $\Psi_\varepsilon^0 \in H^{k + 5}(\R^N)$. In view of the continuity of the~\eqref{NLS-eps} flow with respect to the initial datum in Corollary~\ref{cor:Cauchy-Psi-eps}, we can extend this result to arbitrary initial conditions $\Psi_\varepsilon^0 \in H^{k + 3}(\R^N)$ by a standard density argument. This ends the proof of Proposition~\ref{prop:estim-eps}. \qed

%%%%%%%%%%%%%%%%%%%%%%%%%%%%%%
%%%%%%%%%%%%%%%%%%%%%%%%%%%%%%
\subsection{Proof of Lemma~\ref{lem:estim-CS}}
\label{sub:estim-CS}
%%%%%%%%%%%%%%%%%%%%%%%%%%%%%%
%%%%%%%%%%%%%%%%%%%%%%%%%%%%%%

Assume first that the initial condition $\Psi^0$ belongs to $H^{k + 2}(\R^N)$ and consider the corresponding solution $\Psi \in \boC^0([0, T_{\max}), H^{k + 2}(\R^N))$ to~\eqref{CS}. In this case, the derivative $\partial_t \Psi$ is in $\boC^0([0, T_{\max}), H^k(\R^N))$, so that the quantity
$$\gS_k(t) = \frac{1}{2} \big\| \Psi(\cdot, t) \big\|_{H^k}^2,$$
is well-defined and of class $\boC^1$ on the interval $[0, T_{\max})$. In view of~\eqref{CS}, its derivative is equal to
$$\gS'_k(t) = \Big\langle - \Delta \Psi(\cdot, t) - \frac{1}{2} |\Psi(\cdot, t)|^2 \Psi(\cdot, t), i \Psi(\cdot, t) \Big\rangle_{H^k},$$
for any $0 \leq t \leq T_{\max}$. Since we have
$$- \big\langle \Delta \Psi(\cdot, t), i \Psi(\cdot, t) \big\rangle_{H^k} = \big\langle \nabla \Psi(\cdot, t), i \nabla \Psi(\cdot, t) \big\rangle_{H^k} = 0,$$
by integration by parts, we can combine the Moser estimates in~\eqref{eq:Moser} and the Sobolev embedding theorem in order to get
$$\gS'_k(t) \leq \frac{1}{2} \big\| |\Psi(\cdot, t)|^2 \Psi(\cdot, t) \big\|_{H^k} \big\| \Psi(\cdot, t) \big\|_{H^k} \leq C_k \big\| \Psi(\cdot, t) \big\|_{L^\infty}^2 \big\| \Psi(\cdot, t) \big\|_{H^k}^2 \leq C_k \gS_k(t)^2.$$
Here, the positive number $C_k$, possibly changing from inequality to inequality, only depends on $k$. Setting
\begin{equation}
\label{caol}
T_* := \frac{1}{2 C_k \gS_k(0)} = \frac{1}{C_k \| \Psi^0 \big\|_{H^k}^2},
\end{equation}
we conclude that
\begin{equation}
\label{gorman}
\gS_k(t) \leq \frac{\gS_k(0)}{1 - C_k t \gS_k(0)} \leq 2 \gS_k(0) = \| \Psi^0 \big\|_{H^k}^2,
\end{equation}
as long as $t \leq T_*$. Invoking again the Sobolev embedding theorem, we also have
$$\big\| \Psi(\cdot, t) \big\|_{L^\infty} \leq C_k \gS_k(t)^\frac{1}{2} \leq C_k \| \Psi^0 \big\|_{H^k} < \infty,$$
and a continuation argument as in the proof of Proposition~\ref{prop:estim-eps} guarantees that the maximal time of existence $T_{\max}$ is greater than $T_*$. In view of~\eqref{caol} and~\eqref{gorman}, this completes the proof of Lemma~\ref{lem:estim-CS} when $\Psi^0 \in H^{k + 2}(\R^N)$. Using the uniform lower bound on the maximal time of existence provided by $T_*$, we can finally perform a standard density argument to extend this lemma to any arbitrary initial datum $\Psi^0 \in H^k(\R^N)$. This concludes the proof of Lemma~\ref{lem:estim-CS}. \qed

%%%%%%%%%%%%%%%%%%%%%%%%%%%%%%
%%%%%%%%%%%%%%%%%%%%%%%%%%%%%%
\subsection{Proof of Proposition~\ref{prop:error}}
\label{sub:error}
%%%%%%%%%%%%%%%%%%%%%%%%%%%%%%
%%%%%%%%%%%%%%%%%%%%%%%%%%%%%%

Set
$$\gS_{k - 2}(t) = \frac{1}{2} \big\| u_\varepsilon(\cdot, t) \big\|_{H^{k - 2}}^2,$$
for any $t \in [0, T]$. Under the assumptions of Proposition~\ref{prop:error}, the functions $u_\varepsilon$ and $\partial_t u_\varepsilon$ lie in $\boC^0([0, T], H^k)$ and in $\boC^0([0, T], H^{k - 2})$, respectively. Hence, the function $\gS_{k - 2}$ is of class $\boC^1$ on $[0, T]$. In view of~\eqref{eq:diff}, its derivative is given by
$$\gS'_{k - 2}(t) = \big\langle - \Delta u_\varepsilon(\cdot, t) - G_\varepsilon(\cdot, t) + \varepsilon \boR_\varepsilon(\cdot, t), i u_\varepsilon(\cdot, t) \big\rangle_{H^{k - 2}},$$
where we denote
$$G_\varepsilon := \frac{1}{2} \big( |u_\varepsilon + \Psi|^2 (u_\varepsilon + \Psi) - |\Psi|^2 \Psi \big).$$
Integrating by parts, we see that
$$\big\langle \Delta u_\varepsilon, i u_\varepsilon \big\rangle_{H^{k - 2}} = - \big\langle \nabla u_\varepsilon, i \nabla u_\varepsilon \big\rangle_{H^{k - 2}} = 0,$$
so that the Cauchy-Schwarz inequality leads to
\begin{equation}
\label{diff:S}
\big| \gS'_{k - 2}(t) \big| \leq \sqrt2\gS_{k - 2}(t)^\frac{1}{2} \Big( \big\| G_\varepsilon(\cdot, t) \big\|_{H^{k - 2}} + \varepsilon \big\| \boR_\varepsilon(\cdot, t) \big\|_{H^{k - 2}} \Big).
\end{equation}

At this point, we notice that
$$G_\varepsilon = \frac{1}{2} \big( |u_\varepsilon|^2 u_\varepsilon + |u_\varepsilon|^2 \Psi + 2 \langle u_\varepsilon, \Psi\rangle_\C (u_\varepsilon + \Psi) + |\Psi|^2 u_\varepsilon \big),$$
and we invoke the Moser estimates~\eqref{eq:Moser}, as well as the Young inequality $2 a b \leq a^2+b^2$, in order to obtain
$$\big\| G_\varepsilon \big\|_{H^{k - 2}} \leq C_k \Big( \| u_\varepsilon \|_{H^{k - 2}} \, \big( \| u_\varepsilon \|_{L^\infty}^2 + \| \Psi \|_{L^\infty}^2 \big) + \| \Psi \|_{H^{k - 2}} \, \| u_\varepsilon \|_{L^\infty} \, \big( \| u_\varepsilon \|_{L^\infty} + \| \Psi \|_{L^\infty} \big) \Big),$$
Here as in the sequel, the notation $C_k$ refers to a positive number depending only on $k$. Due to the assumption $k > N/2 + 2$, the Sobolev embedding of $H^{k - 2}(\R^N)$ into $L^\infty(\R^N)$ then leads to 
\begin{equation}
\label{est-G-m}
\big\| G_\varepsilon(\cdot, t) \big\|_{H^{k - 2}} \leq C_k \, \gS_{k - 2}(t)^\frac{1}{2} \, \boK_\varepsilon(T)^2. 
\end{equation}

We now turn to the remainder term $\boR_\varepsilon$, which we decompose as $\boR_\varepsilon := \boR_{\varepsilon, 1} - \boR_{\varepsilon, 2} - \boR_{\varepsilon, 3}$ according to the three terms in the right-hand side of~\eqref{def:R-eps}. We introduce a smooth function $\chi$ such that 
$$\chi(x) = \frac{1}{1 + (1 - |x|^2)^\frac{1}{2}},$$
when $|x| \leq \sigma$, and we use~\eqref{eq:unif-bound} to recast $\boR_{\varepsilon, 1}$ as 
$$\boR_{\varepsilon, 1} = |\Psi_\varepsilon|^2 \, \Delta \Psi_\varepsilon \, \chi \big( \varepsilon^\frac{1}{2} \, \Psi_\varepsilon \big).$$
We next apply the Moser estimate \eqref{eq:Moser-comp} and invoke again~\eqref{eq:unif-bound} to obtain
$$\big\| \chi (\varepsilon^\frac{1}{2} \, \Psi_\varepsilon) \big\|_{\dot{H}^\ell} \leq C_{k, \sigma} \| \Psi_\varepsilon \|_{\dot{H}^\ell},$$
when $1 \leq \ell \leq k - 2$. Here as in the sequel, the notation $C_{k, \sigma}$ refers to a positive number depending only on $k$ and $\sigma$. In view of~\eqref{eq:Moser}, this gives
$$\big\| \boR_{\varepsilon, 1} \big\|_{\dot{H}^\ell} \leq C_{k, \sigma} \| \Psi_\varepsilon \|_{L^\infty} \Big( \| \Delta \Psi_\varepsilon \|_{L^\infty} \big( 1 + \| \Psi_\varepsilon \|_{L^\infty} \big) \| \Psi_\varepsilon \|_{\dot{H}^\ell} + \| \Psi_\varepsilon \|_{L^\infty} \| \Psi_\varepsilon \|_{\dot{H}^{\ell + 2}} \Big).$$
Since
$$\big\| \boR_{\varepsilon, 1} \big\|_{L^2} \leq \| \Psi_\varepsilon \|_{L^\infty}^2 \| \Psi_\varepsilon \|_{\dot{H}^2},$$
due to~\eqref{eq:unif-bound}, we deduce from the condition $k > N/2 + 2$ and the Sobolev embedding theorem that
$$\big\| \boR_{\varepsilon, 1} \big\|_{H^{k - 2}} \leq C_{k, \sigma} \boK_\varepsilon(T)^3 \Big( 1 + \boK_\varepsilon(T) \Big).$$
We argue similarly for the term
$$\boR_{\varepsilon, 2} = \frac{1}{2} |\Psi_\varepsilon|^4 \, \chi \big( \varepsilon^\frac{1}{2} \, \Psi_\varepsilon \big)^2 \, \Psi_\varepsilon,$$
which we bound as
$$\big\| \boR_{\varepsilon, 2} \big\|_{H^{k - 2}} \leq C_{k, \sigma} \boK_\varepsilon(T)^5 \Big( 1 + \boK_\varepsilon(T) \Big).$$
Concerning the term $\boR_{\varepsilon, 3}$, we introduce a further smooth function $\rho$ such that
$$\rho(x) = \frac{1}{(1 - |x|^2)^\frac{1}{2}},$$
when $|x| \leq \sigma$, and we write
$$\boR_{\varepsilon, 3} = \Big( \big( \langle \Psi_\varepsilon, \Delta \Psi_\varepsilon \rangle_\C + |\nabla \Psi_\varepsilon|^2 \big) \, \rho \big( \varepsilon^\frac{1}{2} \, \Psi_\varepsilon \big) + \varepsilon^\frac{1}{2} \, \langle \Psi_\varepsilon, \nabla \Psi_\varepsilon \rangle_\C \cdot \nabla \Psi_\varepsilon \, \rho' \big( \varepsilon^\frac{1}{2} \, \Psi_\varepsilon \big) \Big) \Psi_\varepsilon.$$
Arguing as for the function $\chi$, we have
$$\big\| \rho(\varepsilon^\frac{1}{2} \, \Psi_\varepsilon) \big\|_{\dot{H}^\ell} + \big\| \rho'(\varepsilon^\frac{1}{2} \, \Psi_\varepsilon) \big\|_{\dot{H}^\ell} \leq C_{k, \sigma} \| \Psi_\varepsilon \|_{\dot{H}^\ell},$$
for $1 \leq \ell \leq k - 2$, and we infer as before that
$$\big\| \boR_{\varepsilon, 3} \big\|_{H^{k - 2}} \leq C_{k, \sigma} \boK_\varepsilon(T)^3 \Big( 1 + \boK_\varepsilon(T) \Big).$$
We conclude that
$$\big\| \boR_\varepsilon \big\|_{H^{k - 2}} \leq C_{k, \sigma} \boK_\varepsilon(T)^3 \Big( 1 + \boK_\varepsilon(T)^3 \Big).$$
Coming back to~\eqref{diff:S} and using~\eqref{est-G-m}, as well as the Young inequality, we obtain
$$\gS_{k - 2}'(t) \leq C_{k, \sigma} \boK_\varepsilon(T)^2 \Big( \gS_{k - 2}(t) + \varepsilon^2 \boK_\varepsilon(T)^2 \big( 1 + \boK_\varepsilon(T)^6 \big) \Big).$$
Estimate~\eqref{est:error} finally follows from the Gronwall inequality. \qed

%%%%%%%%%%%%%%%%%%%%%%%%%%%%%%%%%%%%%%%%%%%%%%%%%%
%%%%%%%%%%%%%%%%%%%%%%%%%%%%%%%%%%%%%%%%%%%%%%%%%%
%%%%%%%%%%%%%%%%%%%%%%%%%%%%%%%%%%%%%%%%%%%%%%%%%%
\appendix
\section{Solitons of the Landau-Lifshitz equation}
\label{sec:solitons}
%%%%%%%%%%%%%%%%%%%%%%%%%%%%%%%%%%%%%%%%%%%%%%%%%%
%%%%%%%%%%%%%%%%%%%%%%%%%%%%%%%%%%%%%%%%%%%%%%%%%%
%%%%%%%%%%%%%%%%%%%%%%%%%%%%%%%%%%%%%%%%%%%%%%%%%%

In this appendix, we focus on the correspondence between the solitons of the one-dimensional Landau-Lifshitz equation and of the one-dimensional cubic Schr\"odinger equation. Concerning this latter equation, it is well-known that it owns bright solitons (see e.g.~\cite{SuleSul0}). Up to a space translation and a phase shift, they are given by 
\begin{equation}
\label{sol:NLS}
\Psi_{c, \omega}(x, t) = \frac{(4 \omega - c^2)^\frac{1}{2} \, e^{i \frac{c (x - c t)}{2}}}{\cosh \big( \frac{ (4 \omega - c^2)^\frac{1}{2}}{2} (x - c t) \big)} \, e^{i \omega t},
\end{equation}
for any $(x, t) \in \R^2$. In this formula, the speed $c \in \R$ and the angular velocity $\omega \in \R$ satisfy the condition $4 \omega > c^2$. In the sequel, our goal is to exhibit solitons for the one-dimensional Landau-Lifshitz equation, which converge towards the bright solitons $\Psi_{c, \omega}$ in the cubic Schr\"odinger regime that we have derived in Theorem~\ref{thm:conv-CLS}. 

Going back to the scaling in~\eqref{def:Psi-eps} and to formula~\eqref{sol:NLS}, we look for solitons to~\eqref{LL} under the form
\begin{equation}
\label{ansatz}
\check m_{c, \omega}(x, t) = \check{V}_{c, \omega}(x - c t) e^{i \omega t} \quad \text{and} \quad [m_{c, \omega}]_2(x,t) := [V_{c, \omega}]_2(x - c t),
\end{equation}
for any $(x, t) \in \R^2$. Here, we have set, as before, $\check{V}_{c, \omega} = [V_{c, \omega}]_1 + i [V_{c, \omega}]_3$. The speed $c$ and the angular momentum $\omega$ are real numbers. In order to simplify the analysis, we also assume that $\lambda := \lambda_1 = \lambda_3 > 0$ as in the cubic Schr\"odinger regime. Using the equivalent formulation of the one-dimensional Landau-Lifshitz equation given by
$$m \times \partial_t m - \partial_{xx} m - \big( |\partial_x m|^2 + \lambda (m_1^2 + m_3^2) \big) m + \lambda \big( m_1 e_1 + m_3 e_3 \big) = 0,$$
we observe that the functions $\check{V}_{c, \omega}$ and $[V_{c, \omega}]_2$ solve the ordinary differential system
\begin{equation}
\label{TW}
\tag{TW$_{c, \omega}$}
\begin{cases} - \check{v}'' + i c \big( v_2 \check{v}' - v_2' \check{v} \big) - \big( |\check{v}'|^2 + |v_2'|^2 + \lambda |\check{v}|^2 \big) \check{v} + \lambda \check{v} + \omega v_2 \check{v} = 0,\\
- v_2'' + c \langle i \check{v}, \check{v}' \rangle_\C - \big( |\check{v}'|^2 + |v_2'|^2 + \lambda |\check{v}|^2 \big) v_2 - \omega |\check{v}|^2 = 0.
\end{cases}
\end{equation}
This system appears as a perturbation of the harmonic maps equation, which corresponds to the case $c = \lambda = \omega = 0$. It is invariant by translations and by phase shifts (of the function $\check{v}$). In the energy space $\boE(\R)$, the unique constant solutions to~\eqref{TW} are the trivial solutions $e_2 = (0, 1, 0)$ and $- e_2 = (0, - 1, 0)$. Moreover, we are able to classify all the non-trivial solutions in this space according to the possible values of the parameters $c$, $\lambda$ and $\omega$. 
 
\begin{thm} 
\label{thm:soliton}
Let $\lambda > 0$ and $(c, \omega) \in \R^2$. Up to the invariance by translations and phase shifts (of the map $\check{V}_{c, \omega}$), the unique non-trivial solutions $V_{c, \omega}$ to~\eqref{TW} in the energy space $\boE(\R)$ are given by the following formulae :

\noindent $(i)$ For $\omega = c = 0$, 
$$\forall x \in \R, \check{V}_{0, 0}(x) = \frac{1}{\cosh(\lambda^\frac{1}{2} x)}, \quad \text{and} \quad [V_{0, 0}]_2(x) = \delta \tanh \big( \lambda^\frac{1}{2} x \big),$$
with $\delta \in \{ \pm 1 \}$.

\noindent $(ii)$ For $0 < - \omega \delta < \lambda$ and $c^2 < 4 \big( \lambda + \omega \delta \big)$, or for $\omega \delta \geq 0$ and $0 < c^2 < 4 \big( \lambda + \omega \delta \big)$,
\begin{align*}
\forall x \in \R, \check{V}_{c, \omega}(x) = & \frac{(4 (\lambda + \delta \omega) - c^2)^\frac{1}{2} \, e^\frac{i c \delta x}{2}}{2 \lambda + \delta \omega + (\lambda c^2 + \omega^2)^\frac{1}{2} \cosh \big( (4 (\lambda + \delta \omega) - c^2)^\frac{1}{2} x \big)} \times\\
& \times \bigg( \Big( 2 (\lambda c^2 + \omega^2)^\frac{1}{2} + c^2 - 2 \delta \omega \Big)^\frac{1}{2} \cosh \Big( \Big( (\lambda + \delta \omega) - \frac{c^2}{4} \Big)^\frac{1}{2} x \Big)\\
& + i \sign(c) \delta \Big( 2 (\lambda c^2 + \omega^2)^\frac{1}{2} - c^2 + 2 \delta \omega \Big)^\frac{1}{2} \sinh \Big( \Big( (\lambda + \delta \omega) - \frac{c^2}{4} \Big)^\frac{1}{2} x \Big) \bigg),
\end{align*}
and
$$\forall x \in \R, [V_{c, \omega}]_2(x) = \delta \Bigg( 1 - \frac{4 (\lambda + \delta \omega) - c^2}{2 \lambda + \delta \omega + (\lambda c^2 + \omega^2)^\frac{1}{2} \cosh \Big( (4 (\lambda + \delta \omega) - c^2)^\frac{1}{2} x \Big)} \Bigg),$$
with $\delta \in \{ \pm 1 \}$. Moreover, when $c \neq 0$, the map $\check{V}_{c, \omega}$ can be lifted as $\check{V}_{c, \omega} = |\check{V}_{c, \omega}| e^{i \varphi_{c, \omega}}$, with
$$\varphi_{c, \omega}(x) = \frac{c \delta x}{2} + \sign(c) \delta \arctan \bigg( \Big( \frac{2 (\lambda c^2 + \omega^2)^\frac{1}{2} - c^2 + 2 \delta \omega}{2 (\lambda c^2 + \omega^2)^\frac{1}{2} + c^2 - 2 \delta \omega} \Big)^\frac{1}{2} \tanh \Big( \Big( (\lambda + \delta \omega) - \frac{c^2}{4} \Big)^\frac{1}{2} x \Big) \bigg).$$
\end{thm}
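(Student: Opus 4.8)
The plan is to integrate the autonomous system \eqref{TW} by using its two continuous symmetries — phase shifts of $\check{v}$ and translations — each of which produces a first integral, thereby reducing the problem to a single quadrature. Throughout I would use the $\S^2$ constraint $|\check{v}|^2 + v_2^2 = 1$ to introduce, on the set where $\check{v}\neq 0$, the lift $v_2 = \cos\theta$ and $\check{v} = \sin\theta\,e^{i\varphi}$, so that $|\check{v}'|^2 + |v_2'|^2 = (\theta')^2 + (\varphi')^2\sin^2\theta$ and $\langle i\check{v},\check{v}'\rangle_\C = \varphi'\sin^2\theta$. Inserting this Ansatz into \eqref{TW} and separating real and imaginary parts collapses the system into two scalar equations for $(\theta,\varphi)$; in particular the imaginary part of the first equation takes the divergence form $(\varphi'\sin^2\theta)' = -c\,v_2'$.

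First I would extract the phase first integral. Integrating the divergence identity gives $\langle i\check{v},\check{v}'\rangle_\C + c\,v_2 \equiv C_1$. For a non-trivial solution in $\boE(\R)$, finiteness of the energy forces $\check{v}(x)\to 0$, hence $v_2(x)\to\delta\in\{\pm 1\}$ and $\varphi'\sin^2\theta\to 0$ as $x\to+\infty$; this pins $C_1=c\delta$ and yields
\[
\varphi'(x)=\frac{c\,(\delta-v_2)}{1-v_2^2}=\frac{c\,\delta}{1+\delta v_2}
\]
wherever $\sin\theta\neq 0$. When $c=0$, the requirement that $(\varphi')^2\sin^2\theta=C_1^2/\sin^2\theta$ be integrable (since $\sin\theta\to 0$ at infinity) forces $C_1=0$, so the phase is constant, which is what will single out case $(i)$.

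Next I would produce the energy first integral. Substituting the formula for $\varphi'$ into the real equation for $\theta$ reduces it to an autonomous equation $\theta''=f(\theta)$; multiplying by $\theta'$, integrating, and rewriting through $v_2=\cos\theta$ with $(\theta')^2=(v_2')^2/(1-v_2^2)$, I obtain after fixing the constant by the decay at $+\infty$ the identity
\[
(v_2')^2=(1-\delta v_2)^2\Big(\lambda(1+\delta v_2)^2+2\delta\omega(1+\delta v_2)-c^2\Big).
\]
With $y:=1-\delta v_2\in[0,2]$ this becomes the separable equation $(y')^2=y^2\,Q(y)$, where $Q(y)=\lambda y^2-(4\lambda+2\delta\omega)y+\big(4(\lambda+\delta\omega)-c^2\big)$ and $Q(0)=4(\lambda+\delta\omega)-c^2$ is precisely the quantity under the square roots in the statement. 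Its roots are $(2\lambda+\delta\omega\pm(\lambda c^2+\omega^2)^{1/2})/\lambda$, so the parameter conditions in $(ii)$ are exactly those ensuring $Q(0)>0$, that $2\lambda+\delta\omega>0$, and that the relevant root $y_{\max}=(2\lambda+\delta\omega-(\lambda c^2+\omega^2)^{1/2})/\lambda$ lies in $(0,2]$, i.e. that the orbit is a genuine non-constant solution staying on $\S^2$. I would carry out this discriminant and sign discussion to delimit the two admissible regimes and to exclude the degenerate threshold $y_{\max}=2$ (where $\check{v}$ would vanish at the pole, which forces $0<c^2$ when $\delta\omega>0$).

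Finally I would perform the quadrature $\int dy/(y\sqrt{Q(y)})=\pm x$ (translation invariance fixing the base point), which, since $Q(0)>0$, evaluates to an inverse hyperbolic function and inverts to
\[
y(x)=\frac{4(\lambda+\delta\omega)-c^2}{(2\lambda+\delta\omega)+(\lambda c^2+\omega^2)^{1/2}\cosh\big((4(\lambda+\delta\omega)-c^2)^{1/2}x\big)},
\]
so that $[V_{c,\omega}]_2=\delta(1-y)$ as announced; matching the coefficients of $\cosh$ in $(y')^2=y^2Q(y)$ is what forces the amplitude $(\lambda c^2+\omega^2)^{1/2}$ and the shift $2\lambda+\delta\omega$. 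Reconstructing $\check{v}=(y(2-y))^{1/2}e^{i\varphi}$ then requires integrating $\varphi'=c\delta/(2-y)$, producing the linear term $c\delta x/2$ together with the stated $\arctan$, and rewriting the modulus–phase product via half-angle hyperbolic identities to reach the explicit $\cosh/\sinh$ decomposition of $\check{V}_{c,\omega}$. The degenerate case $c=\omega=0$ is handled separately: the phase is constant, $Q(y)=\lambda(y-2)^2$ has a double root, the quadrature reduces to $v_2'=\pm\lambda^{1/2}(1-v_2^2)$ giving the heteroclinic domain wall $v_2=\delta\tanh(\lambda^{1/2}x)$ and $\check{V}_{0,0}=1/\cosh(\lambda^{1/2}x)$. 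The hard part will be twofold: the completeness claim together with the bookkeeping of the admissible parameter ranges — showing every finite-energy solution arises this way, in particular controlling possible interior zeros of $\check{v}$ where the lift degenerates — and the explicit hyperbolic reconstruction of the real and imaginary parts of $\check{V}_{c,\omega}$ from its modulus and phase.
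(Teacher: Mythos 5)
Your route is essentially the paper's: the same two first integrals (the momentum identity $\langle i\check v,\check v'\rangle_\C=c(\delta-v_2)$ and the energy identity $|v'|^2=\lambda(1-v_2^2)-2\omega(v_2-\delta)$), the same reduction to $(y')^2=y^2Q(y)$ with $y=1-\delta v_2$, the same $\arcosh$ quadrature, and the same phase reconstruction $\varphi'=c\delta/(2-y)$. All your formulas check out against the paper's. Two remarks on where your plan diverges, and where it is still incomplete.

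First, you introduce the spherical lift $v_2=\cos\theta$, $\check v=\sin\theta\,e^{i\varphi}$ from the outset, which degenerates exactly at the points you later worry about (interior zeros of $\check v$). This detour is avoidable: the paper obtains the same two conserved quantities by pairing the first equation of~\eqref{TW} with $\check v'$ and with $i\check v$, and the second with $v_2'$, working directly in Cartesian components; the lift is only invoked at the very end, for $c\neq 0$, once the explicit formula for $|\check v|^2=1-v_2^2$ shows it never vanishes. If you keep the lift as your starting point, your derivation of the first integrals is only valid off the zero set of $\check v$, and you would have to argue separately that the identities extend across it.

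Second, and more substantively, the completeness claim you defer (``showing every finite-energy solution arises this way'') is not a residual technicality but the core of the uniqueness statement, and your plan does not contain the mechanism that closes it. The paper's mechanism is Cauchy--Lipschitz applied twice: (a) eliminating $|v'|^2$ and $\langle i\check v,\check v'\rangle_\C$ from the second equation of~\eqref{TW} yields a genuine second-order ODE for $v_2$ alone (equation~\eqref{dem:v2}); any non-trivial solution's $v_2$ attains an interior extremum, which after translation sits at $x=0$ with $v_2'(0)=0$ and $v_2(0)$ pinned down by $Q(y(0))=0$ together with $-1<v_2(0)<1$, so $v_2$ is unique (and even); (b) $\check v$ then solves the second-order \emph{linear} ODE~\eqref{eq:checkv} whose data $\check v(0)$, $\check v'(0)$ are forced, up to a global phase, by $|\check v(0)|^2=1-v_2(0)^2$, $\langle\check v(0),\check v'(0)\rangle_\C=0$ (maximality) and the momentum identity. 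The quadrature alone only exhibits \emph{a} solution; without step (a)--(b), or an equivalent rigidity argument, you have existence of the listed profiles but not that they exhaust $\boE(\R)$. The same Cauchy--Lipschitz argument is also what disposes of the case of unequal limits $v_2(\pm\infty)$ (which, via the energy identity, forces $\omega=0$ and then $c=0$) and of the $c=0$, $\omega\neq 0$ case, where one must still show $v_3\equiv 0$ before concluding that $\check v$ is real.
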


\begin{rem}
Observe that the numbers $\delta = \pm 1$ in this statement give account of the limit when $x \to + \infty$ of the function $[V_{c, \omega}]_2$.
\end{rem}

\begin{proof}
The proof follows the approach developed in \cite{deLaire3}. By classical regularity theory (see e.g.~\cite{Helein0}), all the solutions $v = (v_1, v_2, v_3)$ to~\eqref{TW} in $\boE(\R)$ are smooth, and all their derivatives are bounded. Given a non-trivial solution $v$ in $\boE(\R)$, this implies that 
$$v_2(\pm \infty) := \lim_{x \to \pm \infty} v_2(x) \in \{ -1, 1 \}, \quad \text{and} \quad \lim_{x \to \pm \infty} |v'(x)| = 0.$$
Taking the complex scalar product of the first equation of~\eqref{TW} by $\check{v}'$, multiplying the second one by $v_2'$, and summing the resulting identities, we obtain
$$\big( |v'|^2 \big)' = - 2 \omega v_2' - 2 \lambda v_2 v_2',$$
due to the fact that $|v| = 1$. Integrating this expression from either $- \infty$ or $+ \infty$ gives
\begin{equation}
\label{dem:der-v}
 |v'|^2 = \lambda \big( 1 - v_2^2 \big) - 2 \omega \big( v_2 - v_2(\pm \infty) \big).
\end{equation}
In particular, we observe that
$$v_2(- \infty) = v_2(+ \infty),$$
except possibly when $\omega = 0$. As a consequence, we infer that the energy density
$$e(v) := \frac{1}{2} \big( |v'|^2 + \lambda |\check{v}|^2 \big)$$
is equal to
\begin{equation}
\label{dem-e}
e(v) = \lambda \big( 1 - v_2^2 \big) - \omega \big( v_2 - v_2(\pm \infty) \big).
\end{equation}
Taking the complex scalar product of the first equation of~\eqref{TW} by $i \check{v}$ and integrating the resulting expression leads to
\begin{equation}
\label{dem-mom}
\langle i \check{v}, \check{v}' \rangle_\C = c (v_2(\pm \infty) - v_2 \big),
\end{equation}
and we observe that $c$ is also equal to $0$ in case $v_2(+ \infty) \neq v_2(- \infty)$. Introducing~\eqref{dem-e} and~\eqref{dem-mom} in the second equation of~\eqref{TW}, we finally obtain the second-order differential equation for the function $v_2$
\begin{equation}
\label{dem:v2}
- v_2'' + \big( v_2 - v_2(\pm \infty) \big) \big( 2 \lambda v_2^2 + (2 \lambda v_2(\pm \infty) + 3 \omega) v_2 - c^2 + v_2(\pm \infty) \omega \big) = 0.
\end{equation}

At this stage, we split the analysis into two cases according to the values of $v_2(- \infty)$ and $v_2(+ \infty)$. If they are different, we know that $c = \omega = 0$, and~\eqref{dem:v2} reduces to
$$- v_2'' - 2 \lambda v_2 \big( 1 - v_2^2 \big) = 0.$$
Multiplying this equation by $v'_2$ and integrating, we get
$$- \big( v_2' \big)^2 - 2 \lambda v_2 ^2 + \lambda v_2^4 + \lambda = 0.$$
By the intermediate value theorem, we also know that $v_2$ vanishes. Up to a translation, we can assume that $v_2(0) = 0$. Hence, $v_2'(0)$ is equal to $\pm \lambda^{1/2}$. Coming back to~\eqref{dem:v2} and invoking the Cauchy-Lipshitz theorem, we conclude that there exist only two possible solutions depending on the value of $v_2'(0)$. Finally, we check that the functions
\begin{equation}
\label{dem-val-v20}
v_2(x) = \pm \tanh \big( \lambda^\frac{1}{2} x \big),
\end{equation}
are these two possible solutions. We next go back to the first equation in~\eqref{TW} and use~\eqref{dem-e} in order to write
\begin{equation}
\label{dem-eq-cv0}
- \check{v}'' + \lambda \big( 2 v_2^2 - 1 \big) \check{v} = 0.
\end{equation}
Again by~\eqref{dem-e}, we obtain
$$|\check{v}'(0)|^2 = \lambda \big( 1 - v_2(0)^2 \big) - v_2'(0)^2 = 0.$$
We are now reduced to invoke the Cauchy-Lipshitz theorem as before, and then to solve explicitly~\eqref{dem-eq-cv0}, so as to prove the existence of a complex number $\alpha$ such that
\begin{equation}
\label{dem-val-checkv0}
\check{v}(x) = \frac{\alpha}{\cosh(\lambda^\frac{1}{2} x)},
\end{equation}
for any $x \in \R$. Since $|\check{v}|^2 = 1 - v_2^2$, we infer that $|\alpha| = 1$, and up to a phase shift of the function $\check{v}$, we can assume that $\alpha = 1$. Statement $(i)$ in Theorem~\ref{thm:soliton} then follows from formulae~\eqref{dem-val-v20} and~\eqref{dem-val-checkv0}.

We now assume that $v_2(- \infty) = v_2(+ \infty) := v_2^\infty \in \{ - 1, 1 \}$. Multiplying~\eqref{dem:v2} by $v_2'$ and integrating, we have
\begin{equation}
\label{dem:derv2}
\big( v_2' \big)^2 = \big( v_2 - v_2^\infty \big)^2 \Big( \lambda \big( v_2 + v_2^\infty \big)^2 + 2 \omega \big( v_2 + v_2^\infty \big) - c^2 \Big),
\end{equation}
so that
\begin{equation}
\label{dem:condv2}
\lambda \big( v_2(x) + v_2^\infty \big)^2 + 2 \omega \big( v_2(x) + v_2^\infty \big) - c^2 \geq 0,
\end{equation}
for any $x \in \R$. Taking the limit $x \to \infty$, we obtain the necessary condition
$$c^2 \leq 4 \big( \lambda + \omega v_2^\infty \big),$$
for having a non-trivial solution, and we assume this condition to be fulfilled in the sequel.
 
At this stage, we also know that $v_2$ owns a global minimum when $v_2^\infty =1$, respectively maximum when $v_2^\infty = - 1$. This optimum is different from $\pm 1$, otherwise it follows from applying the Cauchy-Lipschitz theorem to~\eqref{dem:v2} that the solution $v$ is trivial. Up to a translation, we can assume that this optimum is attained at $x = 0$, so that $v_2'(0) = 0$, with $- 1 < v_2(0) < 1$. In view of~\eqref{dem:derv2}, this gives
$$\lambda \big( v_2(0) + v_2^\infty \big)^2 + 2 \omega \big( v_2(0) + v_2^\infty \big) - c^2 = 0.$$
Using~\eqref{dem:condv2}, we deduce that
\begin{equation}
\label{dem:valv20}
v_2(0) = \frac{1}{\lambda} \Big( v_2^\infty (\omega^2 + c^2 \lambda)^\frac{1}{2} - \omega - \lambda v_2^\infty \Big).
\end{equation}
In particular, the inequality $- 1 < v_2(0) < 1$ leads to the strongest necessary conditions
\begin{equation}
\label{dem:condv}
\text{or } \begin{cases}
0 < - \omega v_2^\infty < \lambda, \text{ and } c^2 < 4 \big( \lambda + \omega v_2^\infty \big),\\
\omega v_2^\infty \geq 0, \text{ and } 0 < c^2 < 4 \big( \lambda + \omega v_2^\infty \big),
\end{cases}
\end{equation}
which we assume in the sequel. In view of~\eqref{dem:valv20}, and since $v_2'(0) = 0$, applying the Cauchy-Lipschitz theorem to~\eqref{dem:v2} then provides the uniqueness, if existence, of the solution $v_2$. Note also that, as a further consequence of the Cauchy-Lipschitz theorem, the function $v_2$ is even. 

In order to construct this solution, we set $y = 1 - v_2^\infty v_2$. The function $y$ is even, positive and owns a global maximum at $x = 0$. Rewriting~\eqref{dem:derv2} in terms of the function $y$ and taking the square root of the resulting expression, we deduce that
\begin{equation}
\label{dem:eq-y}
y'(x) = - y(x) \big( \lambda y(x)^2 - 2 (\omega v_2^\infty + 2 \lambda) y(x) + 4 (\lambda + \omega v_2^\infty) - c^2 \big)^\frac{1}{2},
\end{equation}
for any $x \geq 0$. Observe here that the polynomial $P(X) = \lambda X^2 - 2 (\omega v_2^\infty + 2 \lambda) X + 4 (\lambda + \omega v_2^\infty) - c^2$ owns two distinct roots
$$X_+ := 2 + \frac{1}{\lambda} \Big( \sqrt{\omega^2 + \lambda c^2} + \omega v_2^\infty \Big) \geq 2 > X_- := 2 - \frac{1}{\lambda} \Big( \sqrt{\omega^2 + \lambda c^2} - \omega v_2^\infty \Big),$$
due to~\eqref{dem:condv}. Since $y(0) = X_-$ by~\eqref{dem:valv20} and since $y$ attains its global maximum at $x = 0$, it follows that the quantity $P(y(x))$ in the right-hand side of~\eqref{dem:eq-y} is positive for any $x \neq 0$. In view of~\eqref{dem:condv}, we are also allowed to write
$$\frac{\text{d}}{\text{d}y} \bigg( \arcosh \Big( \frac{4(\lambda + \omega v_2^\infty) - c^2 - (\omega v_2^\infty + 2 \lambda) y}{y \sqrt{\omega^2 + \lambda c^2}} \Big) \bigg) = - \frac{\big( 4 (\lambda + \omega v_2^\infty) - c^2 \big)^\frac{1}{2}}{y \sqrt{P(y)}},$$
for any $0 < y < X_-$. The existence of the solution $v_2$, as well as its expression in Statement $(ii)$ of Theorem~\ref{thm:soliton}, then follow from combining this formula with~\eqref{dem:eq-y}.

We next proceed with the first equation in~\eqref{TW}. Inserting~\eqref{dem-e} in this equation, we obtain the second-order linear differential equation
\begin{equation}
\label{eq:checkv}
- \check{v}'' + i c v_2 \check{v}' + \big( 2 \lambda v_2^2 - i c v_2' + 3 \omega v_2 - \lambda - 2 \omega v_2^\infty \big) \check{v} = 0.
\end{equation}
Since $|\check{v}|^2 = 1 - v_2^2$, the function $|\check{v}|^2$ owns a global maximum for $x = 0$, which is given by
$$|\check{v}(0)|^2 = \frac{2 (\lambda + \omega v_2^\infty) \big( (\omega^2 + c^2 \lambda)^\frac{1}{2} - \omega v_2^\infty \big) - c^2 \lambda}{\lambda^2} \neq 0,$$
in view of~\eqref{dem:valv20}. Using the invariance by phase shift of the function $\check{v}$, we can assume that
\begin{equation}
\label{IC-v}
\check{v}(0) = \frac{1}{\lambda} \Big( 2 (\lambda + \omega v_2^\infty) \big( (\omega^2 + c^2 \lambda)^\frac{1}{2} - \omega v_2^\infty \big) - c^2 \lambda \Big)^\frac{1}{2},
\end{equation}
By maximality at $x = 0$, we also know that
$$\langle \check{v}(0), \check{v}'(0) \rangle_\C = 0,$$
while~\eqref{dem-mom} provides
$$\langle i \check{v}(0), \check{v}'(0) \rangle_\C = c (v_2^\infty - v_2(0) \big).$$
Hence, we have
\begin{equation}
\label{IC-der-v}
\check{v}'(0) = i c \frac{v_2^\infty - v_2(0)}{|\check{v}(0)|^2} \check{v}(0) = i c v_2^\infty \Big( \frac{2 \lambda + \omega v_2^\infty - (\omega^2 + c^2 \lambda)^\frac{1}{2}}{(\omega^2 + c^2 \lambda)^\frac{1}{2} - \omega v_2^\infty} \Big)^\frac{1}{2}.
\end{equation}
In view of~\eqref{IC-v} and~\eqref{IC-der-v}, we deduce as before from the Cauchy-Lipschitz theorem that there exists at most one solution $\check{v}$ to~\eqref{eq:checkv}.

In order to conclude the proof, we are left with the construction of this solution. We split the analysis into two cases. When $c = 0$, equation~\eqref{eq:checkv} reduces to
 \begin{equation}
\label{eq:checkv0}
- \check{v}'' + \big( 2 \lambda v_2^2 + 3 \omega v_2 - \lambda - 2 \omega v_2^\infty \big) \check{v} = 0.
\end{equation}
Taking the real and imaginary parts of this equation, we infer that $v_1$ and $v_3$ also solve it. By~\eqref{IC-v} and~\eqref{IC-der-v}, we have $v_3(0) = v_3'(0) = 0$. By the Cauchy-Lipschitz theorem once again, the function $v_3$ identically vanishes, and we obtain
$$v_1(x)^2 = 1 - v_2(x)^2 = - \frac{8 v_2^\infty \omega (\lambda + v_2^\infty \omega) \big( 1 + \cosh(2 (\lambda + v_2^\infty \omega)^\frac{1}{2} x) \big)}{\big( 2 \lambda - v_2^\infty \omega \big( \cosh(2 (\lambda + v_2^\infty \omega)^\frac{1}{2} x) - 1 \big) \big)^2},$$
for any $x \in \R$. Here, we have used the property that $|\omega| = - v_2^\infty \omega$ due to conditions~\eqref{dem:condv}. Since $v_1'(0) = 0$ and
$$v_1(0) = - \frac{2 \sqrt{2}}{\lambda} \big( \omega v_2^\infty \lambda + \omega^2 \big)^\frac{1}{2},$$
 by~\eqref{IC-v} and~\eqref{IC-der-v}, we infer that
 $$\check{v}(x) = v_1(x) = \frac{4 \big( - v_2^\infty \omega \lambda - \omega^2 \big)^\frac{1}{2} \cosh \big( (\lambda + v_2^\infty \omega)^\frac{1}{2} x \big)}{2 \lambda - v_2^\infty \omega \big( \cosh(2 (\lambda + v_2^\infty \omega)^\frac{1}{2} x) - 1 \big)},$$
 is the desired solution to~\eqref{eq:checkv}.
 
 We finally turn to the case $c \neq 0$. We now compute
 $$|\check{v}(x)|^2 = \frac{\big( 4 (\lambda + v_2^\infty \omega) - c^2 \big) \Big( 2 (\lambda c^2 + \omega^2)^\frac{1}{2} \cosh \big( (4 (\lambda + v_2^\infty \omega) - c^2)^\frac{1}{2} x \big) + c^2 - 2 v_2^\infty \omega \Big)}{\Big( 2 \lambda + v_2^\infty \omega + (\lambda c^2 + \omega^2)^\frac{1}{2} \cosh \big( (4 (\lambda + v_2^\infty \omega) - c^2)^\frac{1}{2} x \big) \Big)^2},$$
for any $x \in \R$. In this case, we observe that the function $\check{v}$ does not vanish on $\R$, so that we can lift it as $\check{v} = |\check{v}| e^{i \varphi}$ with a smooth phase function $\varphi$. Coming back to~\eqref{dem-mom}, we compute 
$$\varphi'(x) = \frac{c}{v_2^\infty + v_2(x)} = \frac{c v_2^\infty}{2} \bigg( 1 + \frac{4 (\lambda + v_2^\infty \omega) - c^2}{2 (\lambda c^2 + \omega^2)^\frac{1}{2} \cosh \big( (4 (\lambda + v_2^\infty \omega) - c^2)^\frac{1}{2} x \big) + c^2 - 2 v_2^\infty \omega} \bigg).$$
Since $\check{v}(0) > 0$ by~\eqref{IC-v}, we can also assume that $\varphi(0) = 0$. Checking that
$$\frac{\text{d}}{\text{d}y} \bigg( \frac{2 a}{b (1 - d^2)^\frac{1}{2}} \arctan \Big( \Big( \frac{1 - d}{1 + d} \Big)^\frac{1}{2} \tanh \Big( \frac{a y}{2} \Big) \Big) \bigg) = \frac{a^2}{b \big( \cosh(a y) + d \big)},$$
for any $y \in \R$, and any coefficients $a > 0$, $b > 0$ and $- 1 < d < 1$, we conclude that the phase function $\varphi$ is given by the formula in Statement $(ii)$ of Theorem~\ref{thm:soliton}. Finally, the formula for the map $\check{v}$ follows from the ones for $|\check{v}|$ and $\varphi$ using the identities
$$\forall x \in \R, \cos(\arctan(x)) = \frac{1}{(1 + x^2)^\frac{1}{2}}, \quad \text{and} \quad \sin(\arctan(x)) = \frac{x}{(1 + x^2)^\frac{1}{2}}.$$
This concludes the proof of Theorem~\ref{thm:soliton}.
\end{proof}

We now go back to the cubic Schr\"odinger regime of the Landau-Lifshitz equation. In view of the classification in Theorem~\ref{thm:soliton}, the solitons $\Psi_{c, \omega}$ of~\eqref{CS} can be obtained in this regime as the limit of the solitons $m_{c_\varepsilon, \omega_\varepsilon}$ of~\eqref{LL} for the choice of parameters
\begin{equation}
\label{def:param-eps}
c_\varepsilon = c, \quad \delta_\varepsilon = 1, \quad \lambda_\varepsilon = \frac{1}{\varepsilon}, \quad \text{and} \quad \omega_\varepsilon = \omega - \frac{1}{\varepsilon}.
\end{equation}
Indeed, fix $\omega > 0$ and $c \geq 0$, with $4 \omega > c^2$, so that the soliton $\Psi_{c, \omega}$ is well-defined. The assumptions $0 < - \omega_\varepsilon \delta_\varepsilon$ and $c_\varepsilon^2 < 4 \big( \lambda_\varepsilon + \omega_\varepsilon \delta_\varepsilon \big)$ then hold for $\varepsilon$ small enough. By Theorem~\ref{thm:soliton}, there exist non-trivial solutions $V_{c_\varepsilon, \omega_\varepsilon}$ to~\eqref{TW} with
\begin{equation}
\label{eq:V-eps}
\begin{split}
\check{V}_{c_\varepsilon, \omega_\varepsilon}(x) = & \frac{\varepsilon^\frac{1}{2} \, (4 \omega - c^2)^\frac{1}{2} \, e^\frac{i c x}{2}}{1 + \omega \varepsilon + (1 + (c^2 - 2 \omega) \varepsilon + \omega^2 \varepsilon^2)^\frac{1}{2} \cosh \big( (4 \omega - c^2)^\frac{1}{2} x \big)} \times\\
& \times \bigg( \big( 2 (1 + (c^2 - 2 \omega) \varepsilon + \omega^2 \varepsilon^2)^\frac{1}{2} + 2 + (c^2 - 2 \omega) \varepsilon \big)^\frac{1}{2} \cosh \Big( \Big( \omega - \frac{c^2}{4} \Big)^\frac{1}{2} x \Big)\\
& + i \sign(c) \big( 2 (1 + (c^2 - 2 \omega) \varepsilon + \omega^2 \varepsilon^2)^\frac{1}{2} - 2 + (2 \omega - c^2) \varepsilon \big)^\frac{1}{2} \sinh \Big( \Big( \omega - \frac{c^2}{4} \Big)^\frac{1}{2} x \Big) \bigg).
\end{split}
\end{equation}
Coming back to the scaling in~\eqref{def:Psi-eps}, we observe that the corresponding function
\begin{equation}
\label{def:U-eps}
\Upsilon_\varepsilon(x, t) := \varepsilon^{- \frac{1}{2}} \check{m}_{c_\varepsilon, \omega_\varepsilon}(x, t) e^\frac{i t}{\varepsilon} = \varepsilon^{- \frac{1}{2}} \check{V}_{c_\varepsilon, \omega_\varepsilon}(x - c t) e^{i \omega t},
\end{equation}
satisfies
$$\Upsilon_\varepsilon(x, t) \to \Psi_{c, \omega}(x, t),$$
as $\varepsilon \to 0$. Moreover, we can control the difference between the functions $\Upsilon_\varepsilon$ and $\Psi_{c, \omega}$ by a factor of order $\varepsilon$ as in Theorem~\ref{thm:conv-CLS}.

\begin{prop}
\label{prop:diff-solitons}
Let $k \in \N$, $c \geq 0$ and $\omega > 0$, with $4 \omega > c^2$. Given any number $0 < \varepsilon < 1/\omega$, consider the function $\Upsilon_\varepsilon$ defined by~\eqref{def:U-eps}, with the choice of parameters $c_\varepsilon$, $\delta_\varepsilon$, $\lambda_\varepsilon$ and $\omega_\varepsilon$ given by~\eqref{def:param-eps}. There exists a positive number $C_k$, depending only on $k$, $c$ and $\omega$, such that
$$\big\| \Upsilon_\varepsilon(\cdot, t) - \Psi_{c, \omega}(\cdot, t) \big\|_{H^k} \underset{\varepsilon \to 0}{\sim} C_k \varepsilon,$$
for any $t \in \R$.
\end{prop}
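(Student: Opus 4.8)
The plan is to reduce the estimate to a time-independent comparison of profiles, and then to expand in powers of $\varepsilon$. Writing $A := (4\omega - c^2)^\frac{1}{2}$, formula~\eqref{def:U-eps} shows that $\Upsilon_\varepsilon(x,t) = f_\varepsilon(x - ct)\, e^{i\omega t}$, where $f_\varepsilon := \varepsilon^{-\frac{1}{2}}\check{V}_{c_\varepsilon,\omega_\varepsilon}$ is the rescaled profile given explicitly by~\eqref{eq:V-eps}, while~\eqref{sol:NLS} gives $\Psi_{c,\omega}(x,t) = g(x-ct)\,e^{i\omega t}$ with $g(y) := A\, e^{\frac{icy}{2}}/\cosh(\frac{Ay}{2})$. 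Since the $H^k$-norm is invariant under the space translation $y = x - ct$ and under multiplication by the unimodular constant $e^{i\omega t}$, we obtain
\begin{equation*}
\big\| \Upsilon_\varepsilon(\cdot,t) - \Psi_{c,\omega}(\cdot,t) \big\|_{H^k} = \big\| f_\varepsilon - g \big\|_{H^k},
\end{equation*}
for every $t \in \R$, so that it is enough to analyze this $t$-independent quantity as $\varepsilon \to 0$.

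Next I would check that $(\varepsilon, x) \mapsto f_\varepsilon(x)$ is smooth on $[0,\varepsilon_0] \times \R$ for some $\varepsilon_0 < 1/\omega$. The only points requiring care are the square roots in~\eqref{eq:V-eps}: the radicand $1 + (c^2 - 2\omega)\varepsilon + \omega^2\varepsilon^2$ and the coefficient of $\cosh((\omega - c^2/4)^\frac{1}{2}x)$ in the denominator stay bounded away from $0$ for small $\varepsilon$, while a short computation shows that the coefficient $Q_\varepsilon$ of the $\sinh$-term satisfies $Q_\varepsilon^2 = \tfrac{c^2 A^2}{4}\varepsilon^2 + O(\varepsilon^3)$, so that $Q_\varepsilon = \varepsilon \sqrt{R(\varepsilon)}$ with $R$ smooth and positive when $c > 0$ (the $\sinh$-term being absent when $c = 0$ since it carries the factor $\sign(c)$). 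In particular $f_0 = g$, consistent with the zeroth-order matching. A direct differentiation then yields the explicit first-order profile $h := \partial_\varepsilon f_\varepsilon\big|_{\varepsilon = 0}$; using $\cosh(Ax) = 2\cosh^2(\tfrac{Ax}{2}) - 1$ to simplify $\partial_\varepsilon D_\varepsilon|_{\varepsilon=0}$, one finds
\begin{equation*}
\begin{split}
h(x) = A\, e^{\frac{icx}{2}}\bigg( & \frac{\tfrac{1}{2}(c^2 - 2\omega)\cosh(\tfrac{Ax}{2}) + i\,\sign(c)\,\tfrac{cA}{2}\sinh(\tfrac{Ax}{2})}{2\cosh^2(\tfrac{Ax}{2})} \\
& - \frac{\omega + \tfrac{1}{2}(c^2 - 2\omega)\cosh(Ax)}{2\cosh^3(\tfrac{Ax}{2})} \bigg).
\end{split}
\end{equation*}

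The heart of the matter is then a second-order Taylor expansion with integral remainder,
\begin{equation*}
f_\varepsilon - g - \varepsilon h = \varepsilon^2 \int_0^1 (1 - s)\, \partial_\varepsilon^2 f_{s\varepsilon}\, ds,
\end{equation*}
combined with a uniform bound $\sup_{\varepsilon \in [0,\varepsilon_0]} \|\partial_\varepsilon^2 f_\varepsilon\|_{H^k} < \infty$, which is where the main technical work lies. I would establish this by exhibiting exponential decay of $f_\varepsilon$ together with its $\varepsilon$- and $x$-derivatives, uniformly in $\varepsilon$. This follows from the structure of~\eqref{eq:V-eps}, since the denominator is bounded below by a positive multiple of $\cosh(Ax)$ uniformly for $\varepsilon \in [0,\varepsilon_0]$, whereas the numerator grows only like $\cosh(\tfrac{Ax}{2}) \sim e^{A|x|/2}$; differentiating in $x$ or in $\varepsilon$ preserves this decay, so every derivative of $f_\varepsilon$ decays like $e^{-A|x|/2}$ with constants uniform in $\varepsilon$, yielding both the smoothness of $\varepsilon \mapsto f_\varepsilon \in H^k$ and the desired bound. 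Consequently $\|f_\varepsilon - g - \varepsilon h\|_{H^k} = O(\varepsilon^2)$, and therefore
\begin{equation*}
\big| \, \|f_\varepsilon - g\|_{H^k} - \varepsilon \|h\|_{H^k} \, \big| \leq \|f_\varepsilon - g - \varepsilon h\|_{H^k} = O(\varepsilon^2).
\end{equation*}

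It remains to verify that $h \not\equiv 0$, which is precisely what pins the convergence rate at exactly $\varepsilon$. Evaluating the explicit formula at $x = 0$ gives $h(0) = -A\omega/2 \neq 0$ since $\omega > 0$, hence $\|h\|_{H^k} > 0$. Setting $C_k := \|h\|_{H^k}$, which depends only on $k$, $c$ and $\omega$, the previous display yields $\|f_\varepsilon - g\|_{H^k} = C_k \varepsilon \,(1 + O(\varepsilon))$, that is the claimed equivalence, valid for every $t \in \R$ by the reduction of the first paragraph. The main obstacle is the uniform-in-$\varepsilon$ exponential-decay estimate needed to control $\partial_\varepsilon^2 f_\varepsilon$ in $H^k$; the nonvanishing of $h$, though elementary to check at $x = 0$, is equally essential, as it is what excludes a faster rate and shows that the order $\varepsilon$ is sharp.
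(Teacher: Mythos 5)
Your proposal is correct and follows essentially the same route as the paper: reduce to the time-independent comparison of profiles, expand in $\varepsilon$ to first order (your $h$ coincides with the paper's $W_{c,\omega}$ after the identity $\cosh(Ax)=2\cosh^2(Ax/2)-1$), bound the remainder in $H^k$ by $O(\varepsilon^2)$ using smoothness and uniform exponential decay, and set $C_k$ equal to the $H^k$-norm of the first-order profile. Your explicit check that $h(0)=-A\omega/2\neq 0$ is a small but worthwhile addition, since the positivity of $C_k$ is left implicit in the paper.
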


\begin{proof}
Given any real number $t$, we infer from~\eqref{sol:NLS} and~\eqref{def:U-eps} that
\begin{equation}
\label{caroll}
\big\| \Upsilon_\varepsilon(\cdot, t) - \Psi_{c, \omega}(\cdot, t) \big\|_{H^k} = \big\| \varepsilon^{- \frac{1}{2}} \check{V}_{c_\varepsilon, \omega_\varepsilon} - U_{c, \omega} \big\|_{H^k},
\end{equation}
where we have set
$$U_{c, \omega}(x) = \frac{2 \, \alpha \, e^{i \frac{c x}{2}}}{\cosh \big( \alpha x \big)},$$
with $\alpha := (\omega - c^2/4)^{1/2}$. In view of~\eqref{eq:V-eps}, we have
\begin{equation}
\label{lafaurie}
\varepsilon^{- \frac{1}{2}} \check{V}_{c_\varepsilon, \omega_\varepsilon}(x) - U_{c, \omega}(x) = \varepsilon W_{c, \omega}(x) + \varepsilon^2 R_\varepsilon(x),
\end{equation}
where the first-order term $W_{c, \omega}$ is equal to
$$W_{c, \omega}(x) := \frac{\alpha \, e^{i \frac{c x}{2}}}{4 \cosh(\alpha x)^2} \Big( (4 \alpha^2 - c^2) \cosh(\alpha x) - \frac{8 \alpha^2}{\cosh(\alpha x)} + 4 i c \alpha \sinh(\alpha x) \Big),$$
whereas the remainder term $R_\varepsilon$ is given by 
\begin{align*}
R_\varepsilon(x) := \frac{\alpha \, e^{i \frac{c x}{2}}}{2 \cosh(\alpha x)^4} \bigg( 2 \big( \Gamma_\varepsilon - 2 N_\varepsilon - \nu_\varepsilon \gamma_\varepsilon \big) \cosh(\alpha x)^3 & - \big( \gamma_\varepsilon (\omega - \nu_\varepsilon) - 2 N_\varepsilon \big) \cosh(\alpha x)\\
+ i \sign(c) \sinh(\alpha x) \big( 2 (K_\varepsilon - \nu_\varepsilon \kappa_\varepsilon) \cosh(\alpha x)^2 & - (\omega - \nu_\varepsilon) \kappa_\varepsilon) \big)\\
+ \frac{\big( \omega - \nu_\varepsilon + 2 \nu_\varepsilon \cosh(\alpha x)^2 \big)^2}{2 (1 + \varepsilon \nu_\varepsilon) \cosh(\alpha x)^2 + \varepsilon (\omega - \nu_\varepsilon)} \big( (2 + \varepsilon & \gamma_\varepsilon) \cosh(\alpha x) + i \varepsilon \sign(c) \kappa_\varepsilon \sinh(\alpha x) \big) \bigg).
\end{align*}
In this expression, we have set
$$\gamma_\varepsilon := \frac{c^2 - 2 \omega}{2} + \varepsilon \Gamma_\varepsilon := \frac{1}{\varepsilon} \Big(\big( 2 (1 + (c^2 - 2 \omega) \varepsilon + \omega^2 \varepsilon^2)^\frac{1}{2} + 2 + (c^2 - 2 \omega) \varepsilon \big)^\frac{1}{2} - 2 \Big),$$
$$\kappa_\varepsilon := \frac{|c| (4 \omega - c^2)^\frac{1}{2}}{2} + \varepsilon K_\varepsilon := \frac{1}{\varepsilon} \Big( 2 (1 + (c^2 - 2 \omega) \varepsilon + \omega^2 \varepsilon^2)^\frac{1}{2} - 2 + (2 \omega - c^2) \varepsilon \Big)^\frac{1}{2},$$
and
$$\nu_\varepsilon := \frac{c^2 - 2 \omega}{2} + \varepsilon N_\varepsilon := \frac{1}{\varepsilon} \Big( \big( 1 + (c^2 - 2 \omega) \varepsilon + \omega^2 \varepsilon^2 \big)^\frac{1}{2} - 1 \Big).$$
Since
$$\Gamma_\varepsilon \to \frac{4 \omega c^2 - c^4 - 2 \omega^2}{8}, \quad K_\varepsilon \to \frac{|c| (2 \omega - c^2) (4 \omega - c^2)^\frac{1}{2}}{8}, \quad \text{and} \quad N_\varepsilon \to \frac{c^2 (4 \omega -c^2)}{8},$$
as $\varepsilon \to 0$, it follows from the smoothness and the exponential decay of the remainder term $R_\varepsilon$ that its $H^k$-norm remains bounded as $\varepsilon \to 0$. In view of~\eqref{lafaurie}, we deduce that
$$\big\| \varepsilon^{- \frac{1}{2}} \check{V}_{c_\varepsilon, \omega_\varepsilon} - U_{c, \omega} \big\|_{H^k} \underset{\varepsilon \to 0}{\sim} \varepsilon \| W_{c, \omega} \|_{H^k}.$$
It is then enough to set $C_k := \| W_{c, \omega} \|_{H^k}$ and to use~\eqref{caroll} so as to complete the proof of Proposition~\ref{prop:diff-solitons}.
\end{proof}

\begin{merci}
The authors acknowledge support from the project ``Dispersive and random waves'' (ANR-18-CE40-0020-01) of the Agence Nationale de la Recherche, and from the grant ``Qualitative study of nonlinear dispersive equations'' (Dispeq) of the European Research Council. Andr\'e~de~Laire was partially supported by the Labex CEMPI (ANR-11-LABX-0007-01) and the MathAmSud program.
\end{merci}

\bibliographystyle{plain}
\bibliography{Bibliogr}

\end{document}